\documentclass[a4paper,11pt]{article}
\usepackage[plainpages=false]{hyperref}
\usepackage{amsfonts,latexsym,rawfonts,amsmath,amssymb,amsthm,mathrsfs}
\usepackage{amsmath,amssymb,amsfonts,latexsym,lscape,rawfonts}

\usepackage[all]{xy}
\usepackage{eufrak}
\usepackage{makeidx}         
\usepackage{graphicx,psfrag}

\usepackage{array,tabularx}

\usepackage{setspace}

\newtheorem{thm}{Theorem}[section]
\newtheorem{cor}[thm]{Corollary}
\newtheorem{lem}[thm]{Lemma}

\newtheorem{prop}[thm]{Proposition}

\theoremstyle{remark}
\newtheorem{rmk}[thm]{Remark}

\theoremstyle{definition}
\newtheorem{Def}[thm]{Definition}                                        %

\def \C {\mathbb C}

\title{ On the regularity problem of   complex Monge-Ampere equations with conical singularities}
\author{Xiuxiong Chen,\ Yuanqi Wang}
\date{\textit{With admiration, we dedicate this article  to Prof Weiyue Ding, at  his 70th birthday.}}
\begin{document}

\maketitle{}
\begin{abstract}
In  the category of metrics with conical singularities along a smooth divisor with angle in $(0, 2\pi)$, we show that  locally defined  weak solutions ($C^{1,1}-$solutions)  to the  K\"ahler-Einstein equations actually possess maximum regularity, which means the metrics are actually H\"older continuous in the singular polar coordinates.  
This shows the weak K\"ahler-Einstein metrics constructed by Guenancia-Paun \cite{GP}, and independently by Yao \cite{GT}, are all actually strong-conical  K\"ahler-Einstein metrics. The key step is to establish a Liouville-type
theorem for weak-conical K\"ahler-Ricci flat metrics defined over $\C^{n}$, which depends on a Calderon-Zygmund  theory in the conical setting. 
\end{abstract}
\tableofcontents
\section{Introduction.}

 Consider the  singular space  $(\C\times \C^{n-1}, \omega_{\beta}) (\beta \in (0,1)),$
where $\omega_{\beta}$ is the standard flat background metric with conical singularities along $\{0\}\times \C^{n-1}$, written  as \[
\omega_{\beta} = {\beta^2\over {|z|^{2-2\beta}}} dz\otimes d\bar{z} +
 \Sigma_{j=1}^{n-1}d v_{j}\otimes d\bar{v}_{j}
,\]
where $z\in \C$ and $v_{j}$ are tangential variables to  $\{0\}\times \C^{n-1}$. Geometrically, this is a product of a flat two-dimension cone with Euclidean $\mathbb{C}^{n-1}$. From now on, we denote the singular divisor  $\{0\}\times \C^{n-1}$ as $D$. In this introduction, we take the balls to be centered at the origin, with respect to $\omega_{\beta}$. For more detailed notations, please refer to section 2 of this article.

We want to understand the PDE theory in this space, using intrinsic metric.  For any domain $\Omega \in \C\times \C^{n-1},$  the complex Monge Ampere equation take a simpler form
\begin{equation}\label{equ conical Monge Ampere}
  \det (\phi_{i \bar j}) = {f \over |z|^{2-2\beta}},
\end{equation}
where \begin{equation}\label{equ every Kahler metric has a potential}\omega_\phi = \sqrt{-1}\partial \bar \partial \phi\end{equation} gives a K\"ahler metric in $\Omega$ with conical angle $2\pi\beta$ along $D$. The Laplacian operator
of $\omega_{\beta}$ is 
\[
\triangle_\beta = \frac{|z|^{2-2\beta}}{\beta^2} {{\partial^2}\over {\partial z \partial \bar z}} +   \Sigma_{j=1}^{n-1} {{\partial^2}\over {\partial v_{j} \partial {\bar v}_{j}}}.
\]

Sometimes we also use the real laplacian of $\omega_{\beta}$, denoted as 
$\Delta$. Notice that $\Delta=4\Delta_{\beta}$. 
\begin{Def}\label{def local formula for conical KE metrics} For any constant $\lambda > 0$, suppose $\phi$ solves (\ref{equ conical Monge Ampere}) with $$f  = e^{\lambda \phi+h}, \ h\in C^{\infty}(\Omega)\ \textrm{and}\
\sqrt{-1}\partial \bar \partial h=0,
$$  then $\omega_\phi$ is a conical K\"ahler-Einstein metric with scalar curvature $ -n \lambda.\;$
When $\lambda=0$,  $\omega_\phi$ is a conical K\"ahler-Ricci flat metric.  
\end{Def}
\begin{rmk}Notice that the conical K\"ahler-Einstein metrics (along  smooth divisors) considered 
in all the references we know (including \cite{Don}, \cite{Brendle}, \cite{CGP}, \cite{GP}, \cite{CDS2}, \cite{JMR}, \cite{LiSun},\cite{LiuZhang}, \cite{SongWang}, \cite{Yao}, \cite{Berman}...), can be written as in Definition \ref{def local formula for conical KE metrics} near the $D$, under holomorphic coordinates.
\end{rmk}

To state our main results, we define the following. 
\begin{Def}\label{def weak conical metrics}(Weak-conical K\"ahler metrics) A function $u$ defined in $\Omega $ is called a $C^{1,1,\beta}(\Omega)$-function if it satisfies 
\begin{itemize}
\item  $u\in C^{2,\alpha}(\Omega\setminus D)\cap  C^{\alpha}(\Omega)$, for some $1>\alpha>0$;
\item  $-K\omega_{\beta}\leq \sqrt{-1}\partial\bar{\partial}u \leq K\omega_{\beta}$ over $\Omega\setminus D$. The minimum of all such $K$ is defined as our $C^{1,1,\beta}(\Omega)$-seminorm and denoted as $[\ \cdot\ ]_{C^{1,1,\beta}(\Omega)}$.
\end{itemize}

A closed positive $(1,1)$-currrent  $\omega$ defined in $\Omega$ is called a weak-conical K\"ahler metric if $\omega$ admits a plurisubharmonic $C^{1,1,\beta}-$potential (in the sense of (\ref{equ every Kahler metric has a potential})) near any $p\in \Omega$, and 
\[\frac{\omega_{\beta}}{K}\leq \omega\leq K\omega_{\beta}\ \textrm{over}\ \Omega\setminus D,\ \textrm{for some}\ K\geq 1.\] Sometimes we call such metrics as $L^{\infty,\beta}$-metrics, with norm defined as the $C^{1,1,\beta}(\Omega)$-seminorm  in the previous paragraph with respect to the potentials.
\end{Def}
\begin{rmk}Notice that for a function, being $C^{1,1,\beta}$ is stronger (away from $D$) than being  $C^{1,1}$ in the usual sense, even in the smooth case (when $\beta=1$). Namely, we require the function 
to be $C^{2,\alpha}$ away from the singularity. The $C^{1,1,\beta}$ and $L^{\infty,\beta}$ spaces are really adapted to the conic case only.

The above definition is the same as in \cite{WYQWF} and \cite{ChenWanglongtime}, we just formulate it here to include the definition of $C^{1,1,\beta}$ functions. 
\end{rmk}

\begin{Def}\label{equ CKS operator}(CKS operators) Similar to definition \ref{def weak conical metrics}, we say $L$ is a Conelike K\"ahler Second-order  operator over a ball $B$, if $L=A^{i\bar{j}}\frac{\partial^2 }{\partial z_{i}\partial \bar{z}_{j}}$ such that 
\begin{enumerate}
\item $A^{i\bar{j}}\in C^{\alpha}(B\setminus D)$ is a Hermitian matrix valued function. 
\item $-K\omega_{\beta}^{i\bar{j}}\leq  A^{i\bar{j}}\leq K\omega_{\beta}^{i\bar{j}}$ as Hermitian matrix functions over $B$, for some constant $K\geq 1$.
\end{enumerate}
We define the $L^{\infty,\beta}_{O}$-norm of a CKS operator $L$ as the infimum of the constant $K$ in the item $2$ above. The laplace operator of any weak conical-K\"ahler metric is an elliptic CKS operator, but in general a CKS operator does not have to be elliptic. 
\end{Def}
According to \cite{CDS2}, if a conical K\"ahler-Einstein metric is in  $C^{\alpha, \beta}\;$ for some $\alpha > 0$, then it is necessarily in $C^{\alpha', \beta}$ for
all $ \alpha' \in (0, \min(1, {1\over \beta}-1)).\;$ The fundamental problem is when $\alpha = 0, \;$ in other words, when  the metric tensor is barely $L^{\infty,\beta}$, does the metric actually possess higher regularity?
This is of course a core problem in the study of conical K\"ahler geometry.  In this paper, we prove

\begin{thm}\label{thm general regularity of conical Monge-Ampere equations}Let $\Omega$ be an open set in $\C^{n}$. Suppose $f \in C^{1,1,\beta}(\Omega)$, $f>0$.   For any  solution  $\phi\in C^{1,1,\beta}(\Omega)$ to equation (\ref{equ conical Monge Ampere}) such that $\sqrt{-1}\partial\bar{\partial}\phi$ is a weak conical metric, $\phi$ is actually in $C^{2,\alpha,\beta}(\Omega)$, for all $\alpha$ such that  $ 0< \alpha < \min({1\over \beta}-1, 1).\;$ 
\end{thm}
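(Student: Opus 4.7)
The plan is to prove the theorem by a blow-up and compactness argument that reduces the claim to a Liouville-type rigidity for weak-conical K\"ahler-Ricci flat metrics on $\C^n$, in the spirit of Caffarelli's approach to Schauder estimates for fully nonlinear equations, adapted here to the conical geometry. The first step would be to linearize: taking logarithms in (\ref{equ conical Monge Ampere}) and applying a tangential derivative $\partial/\partial v_k$, the function $u=\partial\phi/\partial v_k$ satisfies a linear equation of the form $\phi^{i\bar{j}} u_{i\bar{j}} = \partial_{v_k}\log f$. By the weak-conical hypothesis on $\sqrt{-1}\partial\bar\partial\phi$, the coefficient matrix $\phi^{i\bar{j}}$ determines an elliptic CKS operator in the sense of Definition \ref{equ CKS operator} with merely $C^{\alpha}$-conical coefficients. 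Thus the theorem reduces to a Schauder estimate for linear CKS equations: a $C^{\alpha,\beta}$ right-hand side should produce a $C^{2,\alpha,\beta}$ solution. Applied to each tangential derivative and combined with the Monge-Amp\`ere equation itself to control the $z$-direction, this upgrades $\phi$ to the desired regularity.

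For the Schauder estimate I would proceed by contradiction. Suppose it fails along a sequence $\phi_k$; select a maximal scale $r_k\to 0$ at which the normalized $C^{2,\alpha,\beta}$ oscillation is attained around points $p_k$, and rescale by the intrinsic conical dilation $z\mapsto r_k z$, subtracting a suitable affine correction. The rescaled solutions have a uniform normalized $C^{2,\alpha,\beta}$ bound on $\omega_\beta$-balls of radii $r_k^{-1}\to\infty$, and subconverge on compact sets (in $C^{2,\alpha'}$ away from $D$ and $C^{\alpha}$ across $D$) to a global limit on $(\C^n,\omega_\beta)$. Because the inhomogeneity $f$ freezes to a positive constant, this limit is the potential of a weak-conical K\"ahler-Ricci flat metric on $\C^n$. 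If instead the blow-up points $p_k$ recede from $D$ faster than the rescaling rate $r_k$, then the limit is a classical Euclidean K\"ahler-Ricci flat solution and the standard Caffarelli theory closes the argument.

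The Liouville-type theorem then asserts that any weak-conical K\"ahler-Ricci flat metric on $(\C^n,\omega_\beta)$ with at most quadratically growing potential must be a multiple of $\omega_\beta$ itself, forcing the rescaled oscillation to vanish and yielding the contradiction. The main obstacle is exactly this Liouville rigidity; my approach would be via a Calderon-Zygmund theory for the conical Laplacian $\triangle_\beta$, providing $W^{2,p}$-type estimates for solutions of $\triangle_\beta u = g$ in the intrinsic polar coordinates $(|z|^\beta,\arg z,v_j)$. The difficulty is that the classical Calderon-Zygmund machinery rests on the translation invariance and Fourier symbol calculus of $\R^{2n}$, both of which break down transversally to $D$; one must instead exploit the dilation symmetry of $\omega_\beta$, write the Green kernel of $\triangle_\beta$ explicitly by separation of variables on the two-dimensional cone, and verify the H\"ormander cancellation and size conditions in this anisotropic setting. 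Given such $W^{2,p}$ bounds, differentiating the homogeneous Monge-Amp\`ere equation satisfied by the blow-up limit and running a scaling argument on $\C^n$ should force the Hessian of the potential to be constant, completing the rigidity and hence the theorem.
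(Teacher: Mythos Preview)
Your overall architecture --- blow-up, Liouville rigidity on $\C^n$, and a conical Calderon--Zygmund theory for $\Delta_\beta$ --- matches the paper's. But the logical order in your first paragraph has a real gap. You propose to linearize first and reduce to a Schauder estimate for the CKS operator $\phi^{i\bar j}\partial_i\partial_{\bar j}$. At this stage the coefficients $\phi^{i\bar j}$ are only $L^{\infty,\beta}$ across $D$ (that is the hypothesis), and Schauder theory with merely bounded measurable coefficients is false; correspondingly, in a blow-up of the linear equation the coefficients need not freeze to a constant matrix, so your contradiction does not close at the linear level. The Monge--Amp\`ere structure is not incidental --- it is what forces the blow-up limit to be Ricci-flat and hence amenable to Liouville.

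The paper inverts your order. One blows up the \emph{nonlinear} equation directly (no contradiction), obtains a weak-conical Ricci-flat limit $\omega_\infty$ on $\C^n$, and invokes the strong Liouville theorem to get $\omega_\infty=L^\star\omega_\beta$. Convergence to $L^\star\omega_\beta$ is a priori only in $L^2$; it is upgraded to $L^\infty$ by Moser iteration applied to $tr_{L^\star\omega_\beta}\omega_\lambda$, using precisely the hypothesis $f\in C^{1,1,\beta}$ so that $\sqrt{-1}\partial\bar\partial\log f$ is bounded below (this is where $C^{1,1,\beta}$ on $f$ is spent, a step your outline omits). Only now, with $\omega_\phi$ pointwise $\delta$-close to $L^\star\omega_\beta$, does one linearize: tangential difference quotients $D_{\epsilon,v}\phi$ satisfy an equation whose operator is $\delta$-close to $\Delta_\beta$ in $L^{\infty,\beta}_O$, and the conical $W^{2,p,\beta}$ estimate (Theorem~\ref{thm W2p estimate of general L with small oscillation}, which needs only small oscillation, not H\"older coefficients) combined with Theorem~\ref{thm C1alpha estimate with Lp right  hand side} gives $\partial_v\phi\in C^{1,\alpha,\beta}$; the Monge--Amp\`ere equation then recovers the normal $(1,1)$-derivative. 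So your linearization step belongs at the \emph{end} (this is Proposition~\ref{prop small ossilation implies regularity of RF metrics}), after blow-up and Liouville have manufactured the small-oscillation hypothesis it requires.

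Your Liouville sketch (``differentiate and run a scaling argument'') is also too light. The paper's proof of Theorem~\ref{thm Liouville} is itself a bootstrap: weak maximum principles, Dirichlet solvability, and a Trudinger--Harnack inequality (all valid for $L^{\infty,\beta}$ metrics) show that $\omega$ has a tangent cone linearly isometric to $\omega_\beta$; this yields small oscillation at some scale, Proposition~\ref{prop small ossilation implies regularity of RF metrics} upgrades $\omega$ to $C^{\alpha,\beta}$, and then the previously known $C^{\alpha,\beta}$ Liouville theorem finishes. The $W^{2,p,\beta}$ theory enters through that small-oscillation proposition, not as a direct path to constancy of the Hessian.
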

\begin{rmk}Theorem \ref{thm general regularity of conical Monge-Ampere equations} does not give any $C^{2,\alpha,\beta}-$norm bound on $\phi$, it only says $\phi$ has $C^{2,\alpha,\beta}-$regularity in the open set $\Omega$. Actually, the norm bounds and the apriori estimate are already proved in \cite{ChenWanglongtime}, from page 13 to page 19. The point of Theorem \ref{thm general regularity of conical Monge-Ampere equations} is the regularity, but not the norm bounds.

\end{rmk}

 Theorem \ref{thm general regularity of conical Monge-Ampere equations} has an immediate corollary. For the sake of accuracy, we prefer to state it in a more geometric way.

\begin{cor}\label{Cor regularity of weak KE metrics}
Any weak-conical K\"ahler-Einstein metric in a domain $\Omega \subset \C\times \C^{n-1}$ must be a $C^{\alpha,\beta}$ conical K\"ahler-Einstein metric, for any $ 0< \alpha < \min({1\over \beta}-1, 1).\;$
\end{cor}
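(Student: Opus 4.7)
The plan is to deduce this corollary directly from Theorem \ref{thm general regularity of conical Monge-Ampere equations}, for which it is custom-tailored: the theorem already contains all the hard analysis, and the corollary is essentially its geometric reformulation. Working locally around an arbitrary $p\in\Omega$, Definition \ref{def weak conical metrics} provides a neighborhood $U$ of $p$ and a plurisubharmonic $\phi\in C^{1,1,\beta}(U)$ with $\omega = \sqrt{-1}\partial\bar\partial\phi$ on $U$, while Definition \ref{def local formula for conical KE metrics} tells us that $\phi$ solves (\ref{equ conical Monge Ampere}) on $U$ with
\[
 f = e^{\lambda\phi + h}, \qquad h\in C^\infty(U),\quad \sqrt{-1}\partial\bar\partial h = 0.
\]
Since $f>0$ is immediate, the task reduces to verifying $f \in C^{1,1,\beta}(U)$ so that Theorem \ref{thm general regularity of conical Monge-Ampere equations} can be invoked.

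The H\"older pieces $f\in C^\alpha(U)$ and $f\in C^{2,\alpha}(U\setminus D)$ of the $C^{1,1,\beta}$ definition follow at once from the chain rule applied to $\phi$ and $h$. For the complex Hessian bound, set $g=\lambda\phi+h$ and compute
\[
 \sqrt{-1}\partial\bar\partial f = f\,\sqrt{-1}\partial g\wedge\bar\partial g + f\,\sqrt{-1}\partial\bar\partial g.
\]
Since $\sqrt{-1}\partial\bar\partial h=0$, the second summand equals $\lambda f\,\sqrt{-1}\partial\bar\partial\phi$, which is bounded by a multiple of $\omega_\beta$ from the $C^{1,1,\beta}$-seminorm of $\phi$ together with the local boundedness of $f$ (which follows from continuity of $\phi$ and $h$). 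The first summand expands into three kinds of contributions: $\partial\phi\wedge\bar\partial\phi$, cross terms $\partial\phi\wedge\bar\partial h+\partial h\wedge\bar\partial\phi$, and $\partial h\wedge\bar\partial h$. The pure-$h$ term is smooth, the cross terms are handled by Cauchy-Schwarz once the pure-$\phi$ piece is in hand, and for that piece one uses the pointwise inequality $\sqrt{-1}\partial\phi\wedge\bar\partial\phi \leq |d\phi|^2_{\omega_\beta}\,\omega_\beta$ and a local bound on $|d\phi|_{\omega_\beta}$. The two-sided control $-K\omega_\beta\leq \sqrt{-1}\partial\bar\partial\phi\leq K\omega_\beta$ gives $\phi$ a bounded $\omega_\beta$-Laplacian, and an interior elliptic $C^{1,\alpha}$-type estimate in the intrinsic conical metric (of the sort the paper's framework is designed to produce) yields the required Lipschitz bound.

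With $f\in C^{1,1,\beta}(U)$ and $f>0$ established, Theorem \ref{thm general regularity of conical Monge-Ampere equations} promotes $\phi$ to $C^{2,\alpha,\beta}(U)$ for every $\alpha<\min(1/\beta-1,1)$, so $\omega=\sqrt{-1}\partial\bar\partial\phi$ is a genuine $C^{\alpha,\beta}$ conical K\"ahler-Einstein metric on $U$; since $p\in\Omega$ was arbitrary, this gives the corollary on all of $\Omega$. The only nontrivial step is the $\omega_\beta$-Lipschitz estimate for $\phi$ in the middle paragraph: it is standard in the smooth Riemannian setting but requires some care near the divisor $D$ where $\omega_\beta$ degenerates. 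The genuine difficulty of the corollary, however, sits entirely inside Theorem \ref{thm general regularity of conical Monge-Ampere equations} itself.
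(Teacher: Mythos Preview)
Your proposal is correct and follows exactly the paper's route: the authors state only that the corollary ``is directly implied by Theorem \ref{thm general regularity of conical Monge-Ampere equations}, by Definition \ref{def local formula for conical KE metrics},'' and you have supplied the details of that implication, in particular the verification that $f=e^{\lambda\phi+h}\in C^{1,1,\beta}$ when $\lambda\neq 0$. The gradient bound $|d\phi|_{\omega_\beta}\in L^\infty_{\mathrm{loc}}$ you need is precisely Theorem \ref{thm C1alpha estimate with Lp right  hand side} applied to $\Delta_\beta\phi\in L^\infty\subset L^p$, so the step you flag as ``requiring care'' is already available in the paper.
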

\begin{rmk}Using Yau-type Schwartz lemmas and some tricky oberservations, Guenancia-Paun   constructed weak-conical K\"ahler-Einstein metrics in \cite{GP}. Yao also independently constructed weak-conical K\"ahler-Einstein metrics in \cite{Yao}, using interesting tricks.  Corollary \ref{Cor regularity of weak KE metrics} implies when the divisor $D$ is smooth,  both  Guenancia-Paun and Yao's 
weak-conical K\"ahler-Einstein metrics are (strong) conical metrics i.e 
they are all H\"older continous  metrics.
\end{rmk}

In Theorem \ref{thm general regularity of conical Monge-Ampere equations} and Corollary \ref{Cor regularity of weak KE metrics},  we only assume the underlying metric tensor is  $L^{\infty,\beta}$.  A crucial step is to prove  the following Liouville type theorem:

\begin{thm}\label{thm Liouville}(Strong Liouville Theorem)  Suppose $\omega$ is a weak-conical K\"ahler metric over $\C^{n}$, and $\omega$ satisfies

\begin{equation}\label{equ KRF equation}
 \omega^n = \omega_{\beta}^n,\  \frac{\omega_{\beta}}{K}\leq \omega \leq K\omega_{\beta}\
 \textrm{over} \
\mathbb{C}^n \setminus D,\end{equation}
for some $K\geq 1$. Then, there is a linear transformation $L$ which preserves $\{z=0\}$ and $\omega=L^{\star}\omega_{\beta}$.
\end{thm}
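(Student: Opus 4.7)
The plan is to combine the local $C^{2,\alpha,\beta}$ regularity of Theorem \ref{thm general regularity of conical Monge-Ampere equations}, together with the quantitative a priori estimate of \cite{ChenWanglongtime}, with a blow-down argument that exploits the scaling symmetry of the conical background $\omega_\beta$. Locally write $\omega = i\partial\bar\partial\phi$ for a $C^{1,1,\beta}$ potential $\phi$. Since the right-hand side of equation (\ref{equ conical Monge Ampere}) is the smooth constant $f\equiv 1$, Theorem \ref{thm general regularity of conical Monge-Ampere equations} promotes $\phi$ to $C^{2,\alpha,\beta}$, and the a priori estimate of \cite{ChenWanglongtime} yields the scale-invariant bound
$$
[\,i\partial\bar\partial\phi\,]_{C^{\alpha,\beta}(B_1(p))}\ \leq\ C(n, K, \alpha)
$$
on every $\omega_\beta$-unit ball $B_1(p)$, with the constant depending only on $K$.

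The key symmetry is the intrinsic dilation $T_R(z, v) = (R^{1/\beta} z,\, R v)$, which satisfies $T_R^* \omega_\beta = R^2 \omega_\beta$ and is therefore an $R$-homothety of the intrinsic distance $d_{\omega_\beta}$. Setting $\phi_R = R^{-2}(\phi\circ T_R)$, the potential $\phi_R$ again satisfies $(i\partial\bar\partial\phi_R)^n = \omega_\beta^n$ together with the same two-sided pinching $\omega_\beta/K \leq i\partial\bar\partial\phi_R \leq K\omega_\beta$, so the a priori estimate applies uniformly in $R$ on the unit conical ball. Pulling the endomorphism $\omega_\beta^{-1}\omega_\phi$ back by $T_R$ gives $\omega_\beta^{-1}\omega_{\phi_R}$, and because $T_R$ is an $R$-homothety of $d_{\omega_\beta}$ the conical H\"older seminorm transforms cleanly:
$$
[\,\omega_\beta^{-1}\omega_\phi\,]_{C^{\alpha,\beta}(B_R(0))}\ =\ R^{-\alpha}\,[\,\omega_\beta^{-1}\omega_{\phi_R}\,]_{C^{\alpha,\beta}(B_1(0))}\ \leq\ C(K,\alpha)\,R^{-\alpha}\ \longrightarrow\ 0\quad \textrm{as}\ R\to\infty.
$$
Hence $\omega_\beta^{-1}\omega_\phi$ has vanishing conical H\"older seminorm on every bounded set, and is therefore a constant endomorphism on all of $\C^n \setminus D$.

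The remaining step is linear-algebraic: among constant endomorphisms $E$ for which $\omega_\phi = \omega_\beta E$ is a Hermitian positive $(1,1)$-form satisfying $\det E = 1$ and $K^{-1}\mathrm{Id}\leq E\leq K\,\mathrm{Id}$, every such $E$ is realized by a linear map $L$ preserving $D$, with $\omega_\phi = L^* \omega_\beta$. The crucial point is that the $|z|^{2\beta-2}$ singularity of $\omega_\beta$ along $D$ is inherited by $\omega_\phi$ via the lower bound $\omega_\phi \geq \omega_\beta/K$, which prevents any putative linear $L$ from moving $D$ to a different divisor; $L$ is therefore forced into the form $L(z, v) = (a z,\, B z + C v)$ with $a \neq 0$, and the Monge-Amp\`ere constraint reduces to $|a|^{2\beta}|\det C|^2 = 1$.

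The main obstacle is the scaling step. One has to verify precisely that the conical H\"older seminorm $C^{\alpha,\beta}$ (defined via the singular polar coordinate $w = z|z|^{1/\beta-1}$, equivalently via the intrinsic $d_{\omega_\beta}$ distance) really scales as $R^\alpha$ under $T_R$, and that the a priori estimate of \cite{ChenWanglongtime} depends only on $K$ and on no ambient data that would degrade under the rescaling $\phi\mapsto\phi_R$. A subtler point is that the argument produces a constant endomorphism in the intrinsic/conical frame; translating this back to the holomorphic $(z, v)$-frame to extract the linear transformation $L$ requires some care, since that frame is singular with respect to $\omega_\beta$ along $D$.
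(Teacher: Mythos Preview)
Your proposal contains a circularity. You invoke Theorem \ref{thm general regularity of conical Monge-Ampere equations} to promote the weak-conical potential to $C^{2,\alpha,\beta}$, but in this paper the proof of Theorem \ref{thm general regularity of conical Monge-Ampere equations} \emph{uses} Theorem \ref{thm Liouville}: the rescaled metrics $\omega_\lambda$ converge to a weak-conical Ricci-flat limit $\omega_\infty$ on $\C^n$, and it is precisely Theorem \ref{thm Liouville} that identifies $\omega_\infty$ with $L^{\star}\widehat{\omega}_\beta$, thereby supplying the small-oscillation hypothesis for Proposition \ref{prop small ossilation implies regularity of RF metrics}. So Theorem \ref{thm general regularity of conical Monge-Ampere equations} is not available to you here.

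Once $\omega$ is known to be $C^{\alpha,\beta}$, your blow-down argument (uniform $C^{\alpha,\beta}$ a~priori estimate from \cite{ChenWanglongtime} plus the homothety $T_R$) is essentially the mechanism behind Theorem~1.14 of \cite{ChenWanglongtime}, and the paper simply cites that result. The genuinely new content of Theorem \ref{thm Liouville} over Theorem~1.14 of \cite{ChenWanglongtime} is exactly the passage from $L^{\infty,\beta}$ to $C^{\alpha,\beta}$ regularity, and this is the step your proposal does not address independently. The paper handles it by first running the tangent-cone machinery of \cite{ChenWanglongtime} (the seven results listed in Step~1) directly in the weak-conical category, showing that $\omega$ admits a tangent cone of the form $L^{\star}\omega_\beta$; this produces the small-oscillation hypothesis at some scale, after which Proposition \ref{prop small ossilation implies regularity of RF metrics} (built on the $W^{2,p,\beta}$ theory of Sections \ref{section Calderon Zygmund and potential}--\ref{section W2p and C1alpha} and \emph{not} dependent on Theorem \ref{thm Liouville}) yields $C^{\alpha,\beta}$ regularity. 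Your scaling/linear-algebra steps are fine, but they only kick in after this regularity is in hand.
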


\begin{rmk}This strong Liouville Theorem is first proved by Chen-Donaldson\\ -Sun in \cite{CDS2}, with the additional assumption that 
$\omega$ is a metric cone. Later, assuming $\omega$ has $C^{\alpha,\beta}$-regularity for some $\alpha>0$ instead of being a metric cone, the Liouville Theorem is proved by the authors  in Theorem 1.14 in  \cite{ChenWanglongtime}.
\end{rmk}
This strong Liouville  theorem is much harder, since  we assume the underlying metric tensor is only $L^{\infty,\beta}$.  In particular, we can not take any more derivatives to the  Einstein equation (\ref{equ KRF equation}) globally, so existing methods are not sufficient  anymore. For this purpose, we need to develop $W^{2,p}-$estimate in the conical settings.  In \cite{Don}, Donaldson developed the Schauder theory for conical Laplace operator,  and used that to deform the cone angle of 
conical K\"ahler-Einstein metrics.  In this paper, we  establish the  corresponding conical $W^{2,p}$-theory.  The definition of $W^{k,p,\beta} (k=1,2)$ is given in Section 2. 
    To prove the $W^{2,p,\beta}-$estimate, it sufficies to consider the following set of second order operators of non-purely normal $(1,1)-$derivatives as in \cite{Don}.
\begin{eqnarray}\label{Def of mathbb T}& &\mathbb{T}\nonumber
\\&=&\{\frac{\partial^2}{\partial w_i\partial r},\ 1\leq i\leq 2n-2;\ \frac{\partial^2}{\partial w_i \partial w_j},\ 1\leq i,j\leq 2n-2;\ \frac{1}{r}\frac{\partial^2}{\partial w_i\partial \theta},\nonumber
\\& & 1\leq i\leq 2n-2\},\end{eqnarray}
where $r=|z|^{\beta}$, and the $\theta$ is the angle of $z$. There will be more detailed definition in section 2. 

   Following Chap 9 of \cite{GT}, we define a class of operators $T$ as
\begin{equation}\label{equ definition of the operator T}Tf=\mathfrak{D} N_{\beta,B}f,\ \mathfrak{D}\in \mathbb{T},
\end{equation}
where $N_{\beta,B}f$ is the Newtonian potential of $f$, defined by convolution with the Green's function as in Definition \ref{def Newtonian potential}. 

   Actually, the operator $T$ and its dual  $T^{\star}$ are both very similar to the singular integral operators considered by Calderon-Zygmund in \cite{CalderonZygmund}, and by Stein \cite{Stein}(see Theorem $1$ in section 2.2 of \cite{Stein}). Though  our conical case is different from the classical cases on several aspects, the really surprising thing is: the  proof of Theorem 9.9 in \cite{GT} proceeds well in our case, after overcoming several analytical 
difficulties. Namely, the following $W^{2,p,\beta}$-estimate is true.
\begin{thm}\label{thm W2p estimate of general L with small oscillation}
Suppose $L$ is an elliptic CKS operator defined over $B(2)$. Suppose there is a sufficiently small constant $\delta_{0}$ such that $$|L-\Delta_{\beta}|_{L^{\infty,\beta}_{O}}\leq \delta_{0},\ \textrm{over}\ B(2).$$  
Suppose  $u\in C^{2}(B(2)\setminus D)\cap W^{2,p,\beta}[B(2)]$ is  a classical-solution to \[Lu=f\ \textrm{in}\ B(2)\setminus D,\ f\in L^{p}[B(2)],\   \infty>p\geq 2.\] Then 
\begin{equation*}
[u]_{W^{2,p,\beta},B(1)}\leq C (|f|_{L^{p},B(2)}+|u|_{W^{1,p},B(2)}),
\end{equation*}
where $C$ only depends on $n,\beta,p$. In particular, we have 
\begin{equation*}
[u]_{W^{2,p,\beta},B(1)}\leq C (|\Delta_{\beta}u|_{L^{p},B(2)}+|u|_{W^{1,p},B(2)}).
\end{equation*}
\end{thm}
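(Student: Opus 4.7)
The plan is to adapt the proof of Gilbarg-Trudinger Theorem 9.11 to the conical setting, treating the $L^{p}$-boundedness of the singular-integral operators $T=\mathfrak{D}\,N_{\beta,B}$, $\mathfrak{D}\in\mathbb{T}$, as a black box -- this is the conical Calder\'on--Zygmund theorem whose setup is already sketched in the paragraph preceding the statement. The remaining two ingredients are (i) a model $W^{2,p,\beta}$ estimate for the constant-coefficient operator $\Delta_{\beta}$, and (ii) a freezing-coefficients perturbation step that exploits the smallness $|L-\Delta_{\beta}|_{L^{\infty,\beta}_{O}}\le\delta_{0}$.

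For step (i), given $v\in W^{2,p,\beta}[B(2)]$ with $\Delta_{\beta}v=g$ classically off $D$, I would pick a rotationally-invariant cutoff $\eta$ supported in $B(R)$ with $\eta\equiv 1$ on $B(r)$ (for $1\le r<R\le 2$), and set $w=\eta v$. Then
\begin{equation*}
\Delta_{\beta}w \;=\; \eta g + 2\langle\nabla\eta,\nabla v\rangle + v\,\Delta_{\beta}\eta \;=:\; G,
\end{equation*}
with $|G|_{L^{p},B(R)}\le |g|_{L^{p},B(R)}+C(R-r)^{-2}|v|_{W^{1,p},B(R)}$. Writing $w=N_{\beta,B}G$ (up to a conic-harmonic correction controlled by the $W^{1,p}$-norm of $v$), each $\mathfrak{D}\in\mathbb{T}$ yields $\mathfrak{D}w=TG$, and the operator bound on $T$ gives $|\mathfrak{D}w|_{L^{p}}\le C|G|_{L^{p}}$. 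Combining these with the equation $\Delta_{\beta}w=G$ recovers the full $W^{2,p,\beta}$-seminorm of $w$, producing the model estimate
\begin{equation*}
[v]_{W^{2,p,\beta},B(r)}\;\le\; \frac{C}{(R-r)^{2}}\bigl(|g|_{L^{p},B(R)}+|v|_{W^{1,p},B(R)}\bigr).
\end{equation*}

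For step (ii), I rewrite $Lu=f$ as $\Delta_{\beta}u=f+(\Delta_{\beta}-L)u$ and apply step (i) on every pair $B(r)\subset B(R)\subset B(2)$. The pointwise matrix bound $|L-\Delta_{\beta}|_{L^{\infty,\beta}_{O}}\le\delta_{0}$ gives $|(\Delta_{\beta}-L)u|\le \delta_{0}\,|\partial\bar\partial u|_{\omega_{\beta}}$, hence
\begin{equation*}
[u]_{W^{2,p,\beta},B(r)}\;\le\; \frac{C}{(R-r)^{2}}\bigl(|f|_{L^{p},B(R)}+|u|_{W^{1,p},B(R)}\bigr)+C\delta_{0}\,[u]_{W^{2,p,\beta},B(R)}.
\end{equation*}
Fixing $\delta_{0}$ so that $C\delta_{0}\le 1/2$ and invoking the standard absorption lemma on concentric balls (as in Chapter 4 of Gilbarg-Trudinger, or Simon's iteration) absorbs the rightmost term and yields the claimed bound on $B(1)$ in terms of data on $B(2)$. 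The second displayed inequality of the theorem then follows by specializing $L=\Delta_{\beta}$ after the absorption.

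The main obstacle I anticipate is the justification of the Newtonian-potential identity $w=N_{\beta,B}G$ used in step (i). The Green kernel of $N_{\beta,B}$ is singular both at the source and along the divisor $D$, while $w$ is only classically $C^{2}$ off $D$ and merely $W^{2,p,\beta}$ across it. Propagating the representation formula past $D$ requires an approximation argument which regularizes $w$ in the angular variable $\theta$ and in the tangential variables in a way that preserves the conical structure, together with a verification that the integration-by-parts contributions over shrinking tubular neighborhoods of $D$ leave no distributional correction from the divisor in the limit. Once this identification is in hand, the remainder of the argument is the familiar Calder\'on--Zygmund plus perturbation mechanism.
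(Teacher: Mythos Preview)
Your approach is correct and is in spirit the same as the paper's: cut off, apply the model $W^{2,p,\beta}$ bound coming from the $L^{p}$-boundedness of $T$, and absorb using the smallness of $\delta_{0}$. The paper streamlines your steps (i)--(ii) into a single pass: it applies the compactly-supported estimate (Theorem~\ref{thm W2p estimate of compact supported W2,p functions}, which is exactly your black-box $|T f|_{L^{p}}\le C|f|_{L^{p}}$ combined with the Green representation) directly to $\eta^{2}u$. Because both sides of the resulting inequality involve the \emph{same} quantity $[\eta^{2}u]_{W^{2,p,\beta},B(2)}$, the term $C\delta_{0}[\eta^{2}u]_{W^{2,p,\beta},B(2)}$ is absorbed in one shot, and no iteration over concentric balls is needed. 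Your route via the absorption lemma on nested balls also works, but is a detour.

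Regarding the obstacle you flag: once $w=\eta v$ has compact support, the representation $w=N_{\beta,\C^{n}}(\Delta_{\beta}w)$ holds \emph{exactly}, with no conic-harmonic correction. This is Lemma~\ref{lem Green's representation} in the paper; the integration-by-parts across $D$ is justified there using only $w\in W^{2,2,\beta}_{c}\cap C^{2}(\C^{n}\setminus D)$ and the structure of the heat-kernel Green function near $D$. Since $\operatorname{supp}(\Delta_{\beta}w)\subset B$, one also has $N_{\beta,\C^{n}}(\Delta_{\beta}w)=N_{\beta,B}(\Delta_{\beta}w)$, so your parenthetical ``up to a conic-harmonic correction'' can be dropped.
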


 The following Sobolev-imbedding Theorem in the conical category is also crucially needed in the proof of Theorem \ref{thm general regularity of conical Monge-Ampere equations}. 
 \begin{thm}\label{thm C1alpha estimate with Lp right  hand side}Let $u\in W^{1,2}(B(2))\cap C^{2}(B(2)\setminus D)$ be a weak solution to $\Delta_{\beta}u=f$ in $B(2)$, $f\in L^{p},\ p>2n$. Then for all $\alpha<\min\{1-\frac{2n}{p}, \frac{1}{\beta}-1\}$, we have $u\in C^{1,\alpha,\beta}(B(1))$ and 
\[|u|_{1,\alpha,\beta,B(1)}\leq C(|u|_{W^{1,2}, B(2)}+|f|_{L^p, B(2)}).\]
\end{thm}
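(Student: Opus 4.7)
The plan is to combine Theorem~\ref{thm W2p estimate of general L with small oscillation} with a conical Morrey-type embedding. I would factor the argument into two local regularity improvements: first $W^{1,2}\to W^{2,p,\beta}$ via the conical Calder\'on--Zygmund theory just established, and then $W^{2,p,\beta}\to C^{1,\alpha,\beta}$ via a Morrey inequality in the polar coordinates $(r,\theta,w)$ with $r=|z|^{\beta}$.

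\textbf{Bootstrap to $W^{2,p,\beta}$.} On a nested sequence of balls $B(r_k)$ with $r_k\downarrow 1$, I would cut $u$ off by $\eta_k\in C_c^{\infty}(B(r_{k-1}))$ equal to $1$ on $B(r_k)$. The function $v_k:=\eta_k u$ satisfies a distributional equation $\Delta_\beta v_k = \eta_k f + (\textrm{lower-order terms involving }\nabla u\textrm{ and }u)$ on $B(2)$, whose right-hand side is a priori in $L^2$ because $u\in W^{1,2}$. A first application of Theorem~\ref{thm W2p estimate of general L with small oscillation} with $p=2$ yields $u\in W^{2,2,\beta}(B(r_1))$. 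A conical Sobolev embedding of the form $W^{2,2,\beta}\hookrightarrow W^{1,q_1}$ for some $q_1>2$ (which holds because the ambient real dimension is $2n$, and is itself proved by the polar-coordinate reduction used below) then raises the integrability of $\nabla u$. Iterating finitely many times on the shrinking balls upgrades the right-hand side successively to $L^{q_1},L^{q_2},\dots,L^p$, and a last invocation of Theorem~\ref{thm W2p estimate of general L with small oscillation} produces
\[
[u]_{W^{2,p,\beta},B(1)} \leq C\bigl(|f|_{L^p,B(2)} + |u|_{W^{1,2},B(2)}\bigr).
\]

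\textbf{Conical Morrey step.} Once $u\in W^{2,p,\beta}(B(1))$, every $\mathfrak{D}u$ with $\mathfrak{D}\in\mathbb{T}$ lies in $L^p$; the missing pure radial derivative $\partial_r^2 u$ is recovered from the equation $\Delta_\beta u=f\in L^p$ together with the other second-order terms. In the polar coordinates $(r,\theta,w)$, where $\omega_{\beta}$ is (up to scaling) a flat product metric on a $2n$-dimensional cone, these bounds provide $L^p$ control of the full second-derivative matrix of $u$ relative to the orthonormal frame $\{\partial_r,\ r^{-1}\partial_\theta,\ \partial_{w_i}\}$. Since $p>2n$, the classical Morrey inequality in real dimension $2n$ then gives H\"older continuity of the first derivatives $\partial_r u$, $r^{-1}\partial_\theta u$, $\partial_{w_i}u$ with any exponent $\alpha<1-\frac{2n}{p}$, measured in the conical distance; this is exactly $C^{1,\alpha,\beta}(B(1))$-regularity as defined in Section~2. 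The further ceiling $\alpha<\frac{1}{\beta}-1$ is imposed by the intrinsic regularity of the change of coordinates $z\mapsto(r,\theta)$ when one re-expresses this H\"older estimate in the holomorphic frame $(z,v)$.

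\textbf{Main obstacle.} The delicate point is the Morrey inequality across the divisor $D=\{r=0\}$. In the $(z,v)$ chart, a straight segment joining two points on opposite sides of $D$ leaves the coordinate patch of the cone, and the weight $r^{-1}$ entering the last family of operators in $\mathbb{T}$ is singular on $D$. One must instead integrate along arcs adapted to the cone metric, using angular periodicity to extract the boundary value of $r^{-1}\partial_\theta u$ as $r\to 0$ and then propagating the H\"older estimate through $D$. Keeping the final sharp exponent $\alpha<\min\{1-\frac{2n}{p},\,\frac{1}{\beta}-1\}$ throughout this process is where the argument must diverge most sharply from the classical Morrey proof.
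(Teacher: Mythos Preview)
Your Morrey step contains a genuine gap. The space $W^{2,p,\beta}$ does \emph{not} control the full real Hessian in the cone direction: in that direction it only bounds the complex $(1,1)$ component $|z|^{2-2\beta}\partial_z\partial_{\bar z}u$, which in polar coordinates is the \emph{combination} $\partial_r^2 u + r^{-1}\partial_r u + (\beta r)^{-2}\partial_\theta^2 u$. The equation $\Delta_\beta u=f$ gives back exactly the same combination once the tangential trace is subtracted, so you never separate $\partial_r^2 u$ from $r^{-2}\partial_\theta^2 u$ (nor do you ever see $r^{-1}\partial_r\partial_\theta u$). Hence your sentence ``these bounds provide $L^p$ control of the full second-derivative matrix of $u$ relative to the orthonormal frame $\{\partial_r,\ r^{-1}\partial_\theta,\ \partial_{w_i}\}$'' is false, and the classical Morrey inequality cannot be invoked for $\partial_r u$ or $r^{-1}\partial_\theta u$. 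The paper makes this point explicitly in the remark following Lemma~\ref{lem W22 estimate of the newtonian potential}: no $W^{2,p}$ theory for the full real Hessian of $\omega_\beta$ is expected for large $p$.

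The paper avoids this obstacle by working with the Newtonian potential directly rather than through a Sobolev--Morrey embedding. One writes $u=v+N_{\beta,B}f$ with $v$ $\omega_\beta$-harmonic (Corollary~\ref{cor regularity of weak solution to laplace equation with Lp righthand side}); Donaldson's Schauder estimate handles $v$. For $N_\beta f$, the gradient bound $|\nabla_x\Gamma(x,y)|\leq C|x-y|^{1-2n}$ and H\"older's inequality give $|\nabla N_\beta f|_{C^0}\leq C|f|_{L^p}$ since $p>2n$. The H\"older oscillation of $\nabla N_\beta f$ is then obtained by the standard near/far splitting, using the pointwise H\"older continuity of $\nabla_x\Gamma(\cdot,y)$ from the companion heat-kernel paper; this is where the ceiling $\alpha<\frac{1}{\beta}-1$ enters. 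No $L^p$ bound on the individual pieces $\partial_r^2 u$, $r^{-2}\partial_\theta^2 u$ is ever needed.
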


\textbf{Convention of the constants:} The $"C"$'s in the estimates mean constants independent of the object estimated, suppose the object satisfies the conditions and bounds  in the correponding statement. In some cases we  say explicitly what does the $"C"$ depend on. When we don't say anything to the $"C"$, we mean it can depend on the conditions and bounds  in the corresponding statement, for example, like the  $C^{1,1,\beta}$-bound on the given potential, or the $C^{\alpha}-$bound on the given metric given  away from the divisor, or the $C^{1,1,\beta}$-bound on the given volume form $f$, or the quasi-isometric constant of $\omega$ with respect to $\omega_{\beta}$,... and so on.  

\textbf{Distances and Balls:} In most of the cases, we use distance
and balls defined by the model cone metric $\omega_{\beta}$, unless otherwise specified. The balls are usually centered at  the origin, unless otherwise specified. The only big exception is in section \ref{section Calderon Zygmund and potential}, where we use the  Euclidean metric $\omega_{E}$ in the polar coordinates. The reason is that it's super convenient to consider cube decomposition with respect to the  Euclidean metric $\omega_{E}$ in the polar coordinates, which is necessary in the Calderon-Zygmund theory.    $\omega_{E}$ and $\omega_{\beta}$ are  quasi-isometric to each other i.e 
\[\beta\omega_{\beta}\leq \omega_{E}\leq \frac{\omega_{\beta}}{\beta}.\]
Thus the distances defined by them are actually equivalent. 


\section{The $L^{2}$-estimate.}
In this section, we fix the necessary notations and prove the $L^2$-estimate of the conical Laplace equation in Lemma \ref{lem W22 estimate of the newtonian potential}. This is the first step toward a full $W^{2,p,\beta}-$theory for all $p\in (1,\infty)$.
 
Let $r=|z|^{\beta}$ and $\theta$ be just  the angle of $z$ from the positive real axis. In the polar coordinates $r,\theta, w_{i},\ 0\leq i\leq 2n-2$, $\omega_{\beta}$ can be written as 
$$\omega_{\beta}=dr^2+\beta^2r^2d\theta^2+\Sigma_{i=1}^{2n-2}dw_{i}\otimes dw_{i},$$
where $r$ is the distance to the divisor $D=\{0\}\times \C^{n-1}$, $\theta$ is the usual angle of the variable $z$, and $w_{i}$ are the tangential variables. 

 Notice in the polar coordinates we have $\beta^2g_{E}\leq \omega_{\beta}\leq \frac{1}{\beta^2}g_{E}$, where $g_{Euc}$ is Euclidean metric in the polar coordinates i.e
   \begin{equation}\label{equ def of g_E in polar coordinates}g_{E}=dr^2+r^2d\theta^2++\Sigma_{i=1}^{2n-2}dw_{i}\otimes dw_{i}.\end{equation} 
   We denote $\omega_{E}$ as the K\"ahler form of $g_{E}$.
   We will be frequently using the polar coordinates in most of the following content, as in this nice coordinates,  the conical metrics are  quasi-isometric to the Euclidean metric $g_{Euc}$. 
We first define the space $W^{1,p,\beta}(B)$ as usual $W^{1,p}$-space 
in the polar coordinates, $\infty>p\geq 2$. 
 \begin{Def}\label{W2pbeta} ($W^{2,p,\beta}-$space). Given $p\geq 2$, and a ball $B$, a function $u$ is said to be in the space $W^{2,p,\beta}(B)$ if the following holds. We can understand  the polar coordinates as the intrinsic coordinates of $\omega_{\beta}$. 
\begin{itemize}
\item For any $\epsilon$, $u\in W^{2,2}[B\setminus T_{\epsilon}(D)]$, where $T_{\epsilon}(D)$ is the $\epsilon-$tubular neighborhood of $D$.
 
\item $|z|^{2-2\beta}\frac{\partial^2 u}{\partial z \partial\bar{z}}\in L^{p}(B)$;
\item $|z|^{1-\beta}\frac{\partial^2 u}{\partial z \partial w_j}\in L^{p}(B)$, for all $0\leq j\leq 2n-2$;
\item $\frac{\partial^2 u}{\partial w_{i} \partial w_{j}}\in L^{p}(B)$, for all $0\leq i,j\leq 2n-2$;
\item $u\in W^{1,p,\beta}(B)$. 
\end{itemize}
The semi norm is expressed as  
\begin{eqnarray}\label{equ def of seminorm}
& &[u]_{W^{2,p,\beta}}
\\&=&||z|^{2-2\beta}\frac{\partial^2 u}{\partial z \partial\bar{z}}|_{L^{p}(B)}+\Sigma_{j=1}^{2n-2}||z|^{1-\beta}\frac{\partial^2 u}{\partial z \partial w_j}|_{L^{p}(B)}+\Sigma_{i,j=1}^{2n-2}|\frac{\partial^2 u}{\partial w_{i} \partial w_{j}}|_{L^{p}(B)}.
\nonumber
\end{eqnarray}
\end{Def}

\begin{lem}\label{lem completeness of W2,p,beta} For any ball $B$, $W^{2,p,B}(B)$ is a (complete) Banach space.
\end{lem}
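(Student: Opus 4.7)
The plan is to verify the Banach space axioms for $W^{2,p,\beta}(B)$ in the standard way: take a Cauchy sequence $\{u_k\}$, extract $L^p$-limits of each of the weighted derivatives appearing in the seminorm, and then identify those limits as the corresponding weighted derivatives of a candidate function $u$. Since the paper already declares $W^{1,p,\beta}(B)$ to be the usual $W^{1,p}$-space in polar coordinates, I will take its completeness as known, so the first-order piece of the norm already produces a limit function $u \in W^{1,p,\beta}(B)$ with $u_k \to u$ in this norm.

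First I would extract the second-order limits. Because $\{u_k\}$ is Cauchy in $W^{2,p,\beta}(B)$, the sequences
\[
|z|^{2-2\beta}\frac{\partial^{2}u_{k}}{\partial z\partial\bar{z}},\qquad |z|^{1-\beta}\frac{\partial^{2}u_{k}}{\partial z\partial w_{j}},\qquad \frac{\partial^{2}u_{k}}{\partial w_{i}\partial w_{j}}
\]
are Cauchy in $L^{p}(B)$, hence converge to functions $G_{0},G_{j},H_{ij}\in L^{p}(B)$. The task is to show that these limits equal $|z|^{2-2\beta}u_{z\bar z}$, $|z|^{1-\beta}u_{zw_{j}}$ and $u_{w_{i}w_{j}}$ respectively, interpreted in the distributional sense on $B\setminus D$. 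This is where the quasi-isometry between $\omega_{\beta}$ and $\omega_{E}$ in polar coordinates is used: on any shell $B\setminus T_{\epsilon}(D)$ the weights $|z|^{2-2\beta}$ and $|z|^{1-\beta}$ are smooth and bounded above and below, so convergence of the weighted derivatives in $L^{p}$ forces convergence of the unweighted second derivatives in $L^{p}_{\rm loc}(B\setminus D)$. A standard test-function argument against $\varphi \in C^{\infty}_{c}(B\setminus D)$ then identifies the distributional second derivatives of $u$ on $B\setminus D$ with the corresponding $L^{p}_{\rm loc}(B\setminus D)$ limits, after which multiplying by the bounded weights recovers $G_{0}$, $G_{j}$, $H_{ij}$ globally on $B$.

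For the first bullet of Definition~\ref{W2pbeta} (the $W^{2,2}[B\setminus T_{\epsilon}(D)]$ condition), I would simply observe that on each fixed tubular complement $B\setminus T_{\epsilon}(D)$ the weights are uniformly comparable to $1$, so the Cauchy condition on the weighted norms combined with $u_{k}\to u$ in $W^{1,p,\beta}(B)$ yields $u_{k}\to u$ in $W^{2,2}(B\setminus T_{\epsilon}(D))$ via Hölder's inequality (using $p\geq 2$ and the boundedness of $B$). This gives $u\in W^{2,2}[B\setminus T_{\epsilon}(D)]$ for every $\epsilon>0$ and closes the verification.

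The main obstacle is conceptual rather than computational: one must be careful that the degeneration of the weights $|z|^{2-2\beta}$ and $|z|^{1-\beta}$ across the divisor $D$ does not create any phantom distributional mass contributing to $G_{0}$ or $G_{j}$. This is handled precisely by restricting the distributional identification to $B\setminus D$ and then using the $L^{p}(B)$ membership of the weighted functions as the ambient global statement—which is exactly how the seminorm is defined in Definition~\ref{W2pbeta}. Equivalently, one can view the map $u\mapsto \bigl(u,\,|z|^{2-2\beta}u_{z\bar z},\,|z|^{1-\beta}u_{zw_{j}},\,u_{w_{i}w_{j}}\bigr)$ as a closed linear embedding of $W^{2,p,\beta}(B)$ into the Banach space $W^{1,p,\beta}(B)\times L^{p}(B)^{N}$, which realizes $W^{2,p,\beta}(B)$ as a closed subspace and therefore a Banach space.
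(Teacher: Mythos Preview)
Your argument for the second through fifth bullets of Definition~\ref{W2pbeta} is correct and in fact cleaner than the paper's route: extracting the $L^{p}$ limits of the weighted $(1,1)$-type derivatives directly and identifying them distributionally on $B\setminus D$ is all that is required for those items.

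The gap is in your treatment of the \emph{first} bullet, the requirement $u\in W^{2,2}[B\setminus T_{\epsilon}(D)]$ for every $\epsilon>0$. The $W^{2,p,\beta}$ seminorm controls only $|z|^{2-2\beta}u_{z\bar z}$, $|z|^{1-\beta}u_{zw_{j}}$, and $u_{w_{i}w_{j}}$; it says nothing about the $(2,0)$ and $(0,2)$ pieces $u_{zz}$, $u_{\bar z\bar z}$ (equivalently, the full real Hessian in the $z$-plane). Hence the Cauchy condition on the weighted norms, even after stripping the weights on a tubular complement and applying H\"older, does \emph{not} by itself force $\{u_{k}\}$ to be Cauchy in the full $W^{2,2}(B\setminus T_{\epsilon}(D))$. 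Your ``closed subspace of $W^{1,p,\beta}\times (L^{p})^{N}$'' reformulation has the same defect: it would show completeness of the space defined by bullets~2--5 alone, but not of the space that also imposes bullet~1.

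The paper closes this gap with an interior elliptic estimate. From the Cauchy property of the weighted derivatives one does get that $\Delta_{\beta}u_{k}$ is Cauchy in $L^{2}$ on each $B_{1-\epsilon/2}\setminus T_{\epsilon/2}(D)$ (only $(1,1)$-type derivatives enter $\Delta_{\beta}$), and then Theorem~8.8 of \cite{GT} upgrades this to $\{u_{k}\}$ Cauchy in the ordinary $W^{2,2}(B_{1-\epsilon}\setminus T_{\epsilon}(D))$, recovering the missing $(2,0)$ and $(0,2)$ derivatives. Inserting this single elliptic step into your argument (in place of the H\"older claim) makes the proof complete.
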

\begin{rmk}This  completeness lemma is used in the definition of $N_{\beta}f$ for $f\in L^{p}$, $p\geq 2$,  as in Lemma \ref{lem W22 estimate of the newtonian potential}. We  present a full proof for the convenience of the readers, though it's straightforward. 

\end{rmk}
\begin{proof}{of Lemma \ref{lem completeness of W2,p,beta}:}
Without loss of generality, we assume $B$ is the unit ball (centered at the origin). We only  consider the case $W^{2,2,\beta}(B)$, the proof for all $p$ is exactly the same.  It suffices to construct a limit. Suppose $\{u_{k}\}$ is a Cauchy-Sequence of $W^{2,2,\beta}(B)$, then in the sense of 
$W^{1,2,\beta}(B)$, $u_{k}$ admits a limit denoted as $\underline{u}$. Then it remains to show $\underline{u}$ is actually in $W^{2,2,\beta}(B)$.

 Denote $B_{R}$ and the radius of $R$, and  $T_{R}(D)$ as the turbular neighborhood of $D$ with radius $R$ (as in Definition \ref{W2pbeta}). Over $B_{1-\frac{\epsilon}{2}}\setminus T_{\frac{\epsilon}{2}}(D)$, we deduce that  $\{\Delta_{\beta}u_{k}\}$ is a Cauchy-Sequence in $L^{2}[B_{1-\frac{\epsilon}{2}}\setminus T_{\frac{\epsilon}{2}}(D)]$.  Then we apply the interior elliptic estimate to the pair of domains $$B_{1-\frac{\epsilon}{2}}\setminus T_{\frac{\epsilon}{2}}(D),\ B_{1-\epsilon}\setminus T_{\epsilon}(D).$$ 

 By Theorem 8.8 in \cite{GT}, we deduce
\begin{eqnarray*}
& &|u_{k}-u_{l}|_{2,2,B_{1-\epsilon}\setminus T_{\epsilon}(D))}
\\&\leq & C(\epsilon)[|u_{k}-u_{l}|_{1,2,B_{1-\frac{\epsilon}{2}}\setminus T_{\frac{\epsilon}{2}}(D)}+|\Delta_{\beta}u_{k}-\Delta_{\beta}u_{l}|_{0,2,B_{1-\frac{\epsilon}{2}}\setminus T_{\frac{\epsilon}{2}}(D)}].
\end{eqnarray*} 
Thus, $\{u_{k}\}$ is a Cauchy-Sequence in the usual Sobolev space
$W^{2,2}(B_{1-\epsilon}\setminus T_{\epsilon}(D))$. Then, by the completeness of the
usual Sobolev spaces, and the diagonal sequence trick,  there exists a limit function in $W^{2,2}_{loc}(B\setminus D)$, which can be nothing else than $\underline{u}$,   with the following property. 
\[\lim_{k\rightarrow \infty}|u_{k}-\underline{u}|_{2,2,B_{1-\epsilon}\setminus T_{\epsilon}(D)}=0,\ \textrm{for any}\ \epsilon>0.\]
Since over $B_{1-\epsilon}\setminus T_{\epsilon}(D)$, the $W^{2,2,\beta}(B_{1-\epsilon}\setminus T_{\epsilon}(D))$-norm is weaker than the usual $W^{2,2}(B_{1-\epsilon}\setminus T_{\epsilon}(D))-$norm, and $\{u_{k}\}$ is a Cauchy-Sequence in $W^{2,2,\beta}(B)$, we deduce the following by the Minkovski inequality
\begin{eqnarray}\label{eqn estimate of uinfty W22beta norm independent of chop off}& &\nonumber
|\underline{u}|_{W^{2,2,\beta}(B_{1-\epsilon}\setminus T_{\epsilon}(D))}
\\&\leq &\limsup_{k} \{|\underline{u}-u_{k}|_{W^{2,2,\beta}(B_{1-\epsilon}\setminus T_{\epsilon}(D))}\nonumber
+|u_{k}|_{W^{2,2,\beta}(B_{1-\epsilon}\setminus T_{\epsilon}(D))}\}
\\&=&  \limsup_{k} 
|u_{k}|_{W^{2,2,\beta}(B_{1-\epsilon}\setminus T_{\epsilon}(D))}\leq C,
\end{eqnarray}
where $C$ does not depend on $\epsilon$. Since $\epsilon$ is arbitrary, (\ref{eqn estimate of uinfty W22beta norm independent of chop off}) implies 
\begin{equation}
|\underline{u}|_{W^{2,2,\beta}(B)}\leq C<\infty, \ \textrm{then}\ \underline{u}\in W^{2,2,\beta}(B).  
\end{equation}

The proof of Lemma \ref{lem completeness of W2,p,beta} is complete.
\end{proof}
To study the $W^{2,p,\beta}$-estimate, we quote the heat kernel formula 
in \cite{ChenWang}. Denote $x=(r,\theta,\underline{x})$ and $y=(r^{\prime},\theta^{\prime},\underline{x}^{\prime})$, where $\underline{x}$ is the tangential projection of $x$. Denote $R=|\underline{x}-\underline{x}^{\prime}|$. The heat kernel is 
\begin{eqnarray}\label{eqn heat kernel formula} & & H(x,y,t)\nonumber
\\&=&\frac{1}{(4\pi t)^{n}}e^{-\frac{r^2+{{\acute{r}}}^2+R^2}{4t}}
\{\Sigma_{k,\ -\pi <\beta[\theta-\theta^{\prime}]+2k\beta \pi<\pi}e^{\frac{rr^{\prime}\cos(\beta[\theta-\theta^{\prime}]+2k\beta \pi)}{2t}}
\nonumber
\\&+& K(\frac{rr^{\prime}}{2t},\beta[\theta-\theta^{\prime}])
+\frac{1}{2} \Sigma_{k,\beta[\theta-\theta^{\prime}]+2k\beta \pi=\pm\pi}e^{-\frac{rr^{\prime}}{2t}}\},
\end{eqnarray}

where
\begin{eqnarray}& &K(\frac{rr^{\prime}}{2t},\beta[\theta-\theta^{\prime}])
\nonumber
\\&=&
\frac{\sin\frac{\pi}{\beta}}{\pi\beta}\int_{0}^{\infty}e^{-\frac{rr^{\prime}}{2t}\cosh y}\frac{[\cos\frac{\pi}{\beta}-\cos[\theta-\theta^{\prime}]\cosh\frac{y}{\beta}]}{[\cosh\frac{y}{\beta}-\cos\frac{\beta[\theta-\theta^{\prime}]-\pi}{\beta}][\cosh\frac{y}{\beta}-\cos\frac{\beta[\theta-\theta^{\prime}]+\pi}{\beta}]}\nonumber
\\& &\cdot dy.
\end{eqnarray}

In the above formula, we actually abused the notation a little bit, as in \cite{ChenWang}. To be precise,  the $"\theta-\theta^{\prime}"$ means the unique angle in $(-\pi,\pi]$ which is mod $2\pi$ equivalent to 
$\theta-\theta^{\prime}$.

We define the Green function of $\omega_{\beta}$ as
\[\Gamma(x,y)=-\int^{\infty}_{0}H(x,y,t)dt.\]

The following lemma is true. 
\begin{lem}\label{lem boundary int of Green func tends to 1}    For any $x\notin D$, we have 
\[\lim_{\epsilon\rightarrow 0} \int_{\partial B_{x}(\epsilon)}\frac{\partial \Gamma(x,y)}{\partial \nu_{y}}dy=1. \]
\end{lem}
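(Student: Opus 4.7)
Since $x\notin D$, choose $\epsilon_0>0$ such that the closed ball $\overline{B_x(\epsilon_0)}$ is disjoint from $D$; inside this ball the metric $\omega_\beta$ is locally flat and the heat kernel $H(x,y,t)$ is jointly smooth in $(y,t)\in B_x(\epsilon_0)\times(0,\infty)$. My plan is to show, via an interchange of integration together with Gauss's divergence theorem and the heat equation, that the flux equals exactly $1$ for every $\epsilon<\epsilon_0$, which is stronger than the stated limit.

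First I would differentiate under the integral in $\Gamma(x,y)=-\int_0^\infty H(x,y,t)\,dt$ and apply Fubini to write
\[
\int_{\partial B_x(\epsilon)}\frac{\partial \Gamma(x,y)}{\partial \nu_y}\,dy=-\int_0^\infty\!\int_{\partial B_x(\epsilon)}\frac{\partial H(x,y,t)}{\partial \nu_y}\,dy\,dt.
\]
The justification uses the explicit formula (\ref{eqn heat kernel formula}): on $y\in\partial B_x(\epsilon)$ we have $|y-x|=\epsilon>0$, so $\partial_{\nu_y}H$ enjoys Gaussian decay of the form $e^{-c\epsilon^2/t}/t^{n+1}$ as $t\to 0^+$ and polynomial decay $O(t^{-(n+1)})$ as $t\to\infty$, both integrable in $t$.

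Next, for each fixed $t>0$ the domain $B_x(\epsilon)$ sits inside the smooth locus of $\omega_\beta$, so the classical divergence theorem together with the heat equation $\Delta_\beta H(x,\cdot,t)=\partial_t H(x,\cdot,t)$ yields
\[
\int_{\partial B_x(\epsilon)}\frac{\partial H(x,y,t)}{\partial \nu_y}\,dy=\int_{B_x(\epsilon)}\Delta_\beta H(x,y,t)\,dy=\frac{d}{dt}\int_{B_x(\epsilon)}H(x,y,t)\,dy.
\]
Substituting and integrating in $t$ from $0$ to $\infty$, the flux collapses to
\[
\lim_{t\to 0^+}\int_{B_x(\epsilon)}H(x,y,t)\,dy-\lim_{t\to\infty}\int_{B_x(\epsilon)}H(x,y,t)\,dy.
\]

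The first of these limits equals $1$: from (\ref{eqn heat kernel formula}) the dominant $k=0$ term, for $y$ near $x$, reduces to the standard Euclidean Gaussian $(4\pi t)^{-n}e^{-d(x,y)^2/(4t)}$, while the remaining discrete terms, the $K$-integrand, and the boundary contributions are all bounded by $Ct^{-n}e^{-cr^2/t}$ with $r>0$ fixed, hence vanish as $t\to 0^+$; standard Gaussian concentration then gives the value $1$. The second limit vanishes because every summand in (\ref{eqn heat kernel formula}) is uniformly $O(t^{-n})$ as $t\to\infty$, so $\int_{B_x(\epsilon)}H(x,y,t)\,dy=O(t^{-n})\to 0$. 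The main obstacle is the careful verification of the Fubini interchange and of these two limits; each reduces to a routine estimate on the three pieces (the finite sum over $k$, the $K$-integral, and the boundary terms) of the explicit heat-kernel formula in \cite{ChenWang}, exploiting that $r,r'$ are bounded below by a positive constant on $B_x(\epsilon)$.
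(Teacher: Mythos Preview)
Your argument is correct in substance and proves something slightly stronger than the lemma (the flux equals $1$ for every sufficiently small $\epsilon$, not merely in the limit), but it is a genuinely different route from the paper's. The paper splits $\Gamma=\Gamma_{\mathrm{Euc}}+\Gamma_E$, where $\Gamma_{\mathrm{Euc}}=-\frac{1}{4\pi^n\rho^{2n-2}}\Gamma(n-1)$ is the explicit Newtonian kernel and $\Gamma_E$ collects the $k\neq 0$ terms, the $K$-integral, and the boundary contributions from the heat-kernel formula; it then shows $|\nabla_y\Gamma_E|\le C_a$ on $B_x(\epsilon_0)$ so that its flux over $\partial B_x(\epsilon)$ is $O(\epsilon^{2n-1})\to 0$, while the Euclidean piece contributes exactly $1$ by the classical computation. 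Your approach instead keeps $\Gamma=-\int_0^\infty H\,dt$ intact, invokes the heat equation plus divergence theorem to collapse the flux to $\int_{B_x(\epsilon)}H(x,y,0^+)\,dy-\int_{B_x(\epsilon)}H(x,y,\infty)\,dy=1-0$, and so avoids identifying $\Gamma_{\mathrm{Euc}}$ explicitly. The trade-off: the paper's method is entirely elementary once the gradient bound on $\Gamma_E$ is in hand, while yours is cleaner conceptually but shifts the work to justifying the Fubini step and the two endpoint limits (which, as you say, amounts to the same Gaussian estimates on each piece of the explicit formula). One small correction: in the paper's notation $\Delta=4\Delta_\beta$ is the real Laplacian of $\omega_\beta$, and it is $\Delta$, not $\Delta_\beta$, that satisfies both the divergence theorem $\int_{\partial B}\partial_\nu u=\int_B\Delta u$ and the heat equation $\partial_tH=\Delta_yH$; this does not affect your conclusion.
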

\begin{proof}{of Lemma \ref{lem boundary int of Green func tends to 1}:}
   It sufficies to notice that, by the assumption that $x\notin D$, 
we have $r_{x}>0$ ($r_{x}=r$, we just add the sub $x$ to emphasize its dependence on $x$). Then, when $k\neq 0$, we deduce
\begin{equation}
e^{-\frac{r^{2}+r^{\prime 2}}{4t}}e^{\frac{rr^{\prime}\cos(\beta[\theta-\theta^{\prime}]+2k\beta \pi)}{2t}}\leq e^{-\frac{(r-r^{\prime })^2+2(1-\cos\beta\pi)rr^{\prime}}{4t}}\leq e^{-\frac{a}{t}}, 
\end{equation}   
where  $a$ is a  positive constant depending on $x$, especially $r_{x}$ (the distance from $x$ to $D$). Then, by defining 
\begin{eqnarray}& &\Gamma_{E}
\\&=& -\int_{0}^{\infty}\frac{1}{(4\pi t)^{n}}e^{-\frac{r^2+{{\acute{r}}}^2+R^2}{4t}}
\{\Sigma_{k\neq 0,\ -\pi <\beta[\theta-\theta^{\prime}]+2k\beta \pi<\pi}e^{\frac{rr^{\prime}\cos(\beta[\theta-\theta^{\prime}]+2k\beta \pi)}{2t}}
\nonumber
\\&+& K(\frac{rr^{\prime}}{2t},\beta[\theta-\theta^{\prime}])
+\frac{1}{2} \Sigma_{k,\beta[\theta-\theta^{\prime}]+2k\beta \pi=\pm\pi}e^{-\frac{rr^{\prime}}{2t}}\}dt,
\end{eqnarray}
we obtain  when $y\in B_{x}(\frac{r_{x}\sin\beta\pi }{2})$ and $\beta[\theta-\theta^{\prime}]\neq \pm \pi\ mod\ 2\beta\pi$ that 
\begin{eqnarray}& &|\nabla_{y}\Gamma_{E}|\nonumber
\\& \leq &\int_{0}^{\infty}|\nabla_{y}\{\frac{1}{(4\pi t)^{n}}e^{-\frac{r^2+{{\acute{r}}}^2+R^2}{4t}}
[\Sigma_{k\neq 0,\ -\pi <\beta[\theta-\theta^{\prime}]+2k\beta \pi<\pi}e^{\frac{rr^{\prime}\cos(\beta[\theta-\theta^{\prime}]+2k\beta \pi)}{2t}}
\nonumber
\\&+& K(\frac{rr^{\prime}}{2t},\beta[\theta-\theta^{\prime}])
\}|dt\nonumber
\\&\leq & C\int_{0}^{\infty}(\frac{1}{t^{n}}+\frac{1}{t^{n+\frac{1}{2}}}+\frac{1}{t^{n+1}})e^{-\frac{a}{t}}dt\leq C_{a}.
\end{eqnarray}
By continuity, we deduce for all $x\notin D$ and $y\in B_{x}(\frac{r_{x}\sin\beta\pi }{2})$ that 
\begin{equation}\label{equ bound on Gamma E}
|\nabla_{y}\Gamma_{E}(x,y)|\leq C_{a}.
\end{equation} 
Notice that  
\begin{eqnarray}& &\Gamma(x,y) \nonumber
\\&=& -\int^{\infty}_{0}\frac{1}{(4\pi t)^{n}}e^{-\frac{r^2+{{\acute{r}}}^2+R^2}{4t}}
e^{\frac{rr^{\prime}\cos(\beta[\theta-\theta^{\prime}])}{2t}}dt+\Gamma_{E}.\nonumber
\\&=&-\int^{\infty}_{0}\frac{1}{(4\pi t)^{n}}e^{-\frac{|x-y|^2}{4t}}
dt+\Gamma_{E}.\nonumber
\\&=& -\frac{1}{4\pi^{n}\rho^{2n-2}}\Gamma(n-1)+\Gamma_{E},
\end{eqnarray}
where $\rho=|x-y|$ and $\Gamma(n)$ is the Gamma-function. Using (\ref{equ bound on Gamma E}), we deduce $$\lim_{\epsilon\rightarrow 0} \int_{\partial B_{x}(\epsilon)}\frac{\partial \Gamma_{E}(x,y)}{\partial \rho}dy=0.$$
Moreover, we have  $\frac{\Gamma(n)S(2n-1)}{2\pi^{n}}=1$, where $S(2n-1)$ is the area of $(2n-1)$-dimensional unit sphere. Then we compute 
\begin{eqnarray}
& &\lim_{\epsilon\rightarrow 0} \int_{\partial B_{x}(\epsilon)}\frac{\partial \Gamma(x,y)}{\partial \rho}dy\nonumber
\\&=& \lim_{\epsilon\rightarrow 0} \int_{\partial B_{x}(\epsilon)}\frac{\partial \Gamma_{E}(x,y)}{\partial \rho}dy+\frac{1}{2\pi^{n}\rho^{2n-1}}\Gamma(n)S(2n-1)\rho^{2n-1}\nonumber
\\&=& 1.
\end{eqnarray}

\end{proof}
\begin{Def}\label{def Newtonian potential}  We denote $N_{\beta,\Omega}f$ as the Newtonian potential of $f$ over $\Omega$ i.e
\[N_{\beta,\Omega}f=\int_{\Omega}\Gamma(x,y)f(y)dy.\]
\end{Def}
\begin{lem}\label{lem Green's representation}(Green Representation) Suppose $u\in W^{2,2,\beta}_{c}(C^{n})\cap C^{2}(\C^{n}\setminus D)$, then the Green's representation formula 
holds for $u$ i.e for all $x\notin D$, we have
\begin{equation}\label{equ Green's formula}  u(x)=N_{\beta,\C^{n}}(\Delta_{\beta}u)(x).
\end{equation}

\end{lem}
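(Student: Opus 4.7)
Fix $x_0 \in \C^n \setminus D$ and choose $R$ large enough that $\textrm{supp}(u) \Subset B_R$. For small $\epsilon < \tfrac{1}{2}d(x_0, D)$ and small $\delta > 0$ I would introduce the excised domain
$$\Omega_{\epsilon,\delta} \;=\; B_R \setminus \bigl(\overline{B_{x_0}(\epsilon)} \cup T_\delta(D)\bigr),$$
on which $u$ is $C^2$ by hypothesis, $\Gamma(x_0,\cdot)$ is smooth (the heat-kernel formula \eqref{eqn heat kernel formula} converges absolutely with all derivatives since $x_0$ sits at positive distance from the boundary), and $\Delta_\beta \Gamma(x_0,\cdot)=0$ classically. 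The plan is to apply Green's second identity on $\Omega_{\epsilon,\delta}$ and track the three boundary pieces as $(\epsilon,\delta)\to 0$.

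On $\partial B_R$ the boundary integrand vanishes by compact support of $u$. On the inner sphere $\partial B_{x_0}(\epsilon)$ the analysis mirrors the classical Newtonian one: the decomposition $\Gamma = -\Gamma(n-1)/(4\pi^n \rho^{2n-2}) + \Gamma_E$ with $\Gamma_E$ having bounded gradient (from the proof of Lemma \ref{lem boundary int of Green func tends to 1}) shows that the $\Gamma\, \partial_\nu u$ contribution is $O(\epsilon)\to 0$, while the $u\,\partial_\nu\Gamma$ contribution converges to $u(x_0)$ by continuity of $u$ at $x_0$ together with Lemma \ref{lem boundary int of Green func tends to 1}.

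The main obstacle is the tubular boundary $\partial T_\delta(D)$. For $y$ in any fixed neighborhood of $D$ that is bounded away from $x_0$, direct inspection of \eqref{eqn heat kernel formula} (exactly as in the proof of Lemma \ref{lem boundary int of Green func tends to 1}, but with the roles of the two arguments exchanged) gives uniform bounds on $\Gamma(x_0,y)$ and $\nabla_y\Gamma(x_0,y)$. Writing the surface element as $dS=\beta\delta\, d\theta\, dw$ in the polar coordinates of Section 2, Fubini yields
$$\int_0^1 \int_{\{r=\delta\}\cap\textrm{supp}(u)} \bigl(|u|^2+|\nabla u|^2\bigr)\,\beta\delta\, d\theta\, dw\, d\delta \;=\; \int_{T_1(D)\cap\textrm{supp}(u)} \bigl(|u|^2+|\nabla u|^2\bigr)\,dV_\beta \;<\;\infty,$$
by the $W^{1,2,\beta}$-control built into $u\in W^{2,2,\beta}_c(\C^n)$. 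Hence there is a sequence $\delta_k\to 0$ along which $\int_{\partial T_{\delta_k}(D)} (|u|^2+|\nabla u|^2)\,dS\to 0$, and Cauchy--Schwarz combined with the uniform bounds on $\Gamma$ and $\nabla\Gamma$ drives the divisor-boundary contribution to zero along $\delta_k$. Passing first to $\delta_k\to 0$ and then $\epsilon\to 0$ in Green's identity yields $u(x_0)=N_{\beta,\C^n}(\Delta_\beta u)(x_0)$. The hard part is precisely this third step: the $W^{1,2,\beta}$-integrability is exactly what is needed to extract a good sequence $\delta_k$ via Fubini, and the heat-kernel estimate from Lemma \ref{lem boundary int of Green func tends to 1} is what keeps $\Gamma(x_0,\cdot)$ sufficiently regular near $D$ for the Cauchy--Schwarz bound to close.
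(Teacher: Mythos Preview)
Your proposal is correct and follows essentially the same route as the paper: both apply the classical Green's identity derivation (as in formula (2.17) of \cite{GT}) together with Lemma \ref{lem boundary int of Green func tends to 1} to handle the small sphere about $x_0$. The paper's proof simply asserts that integration by parts is valid for $u\in W^{2,2,\beta}_{c}(\C^{n})\cap C^{2}(\C^{n}\setminus D)$ and defers to \cite{GT}; your tube-excision with the Fubini/good-sequence argument for $\partial T_{\delta_k}(D)$ is exactly the justification the paper leaves implicit.
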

\begin{proof}{of Lemma \ref{lem Green's representation}:}  
First, since $x\notin D$ and $u\in C^{2}(C^{n}\setminus D)$, then when  $\epsilon_{0}$ such that  $B_{x}(\epsilon_{0})\cap D=\emptyset$, the following 
  \[N_{\beta,\C^{n}}\Delta_{\beta}u=\int_{B_{x}(\epsilon_{0})}\Gamma(x,y)\Delta_{\beta}u(y)dy+
  \int_{\C^{n}\setminus B_{x}(\epsilon_{0})}\Gamma(x,y)\Delta_{\beta}u(y)dy\]
   is well-defined pointwisely for all $x\in \C^{n}\setminus D$. 
   
Since integration by parts holds for $u\in W^{2,2,\beta}_{c}(C^{n})\cap C^{2}(C^{n}\setminus D)$, then (\ref{equ Green's formula}) follows from the well known derivation of formula (2.17) in page 18 of \cite{GT}, and Lemma \ref{lem boundary int of Green func tends to 1}.

\end{proof}

In the conical case, the operator $T$ (as defined in \ref{equ definition of the operator T}) might not be self adjoint because there is one
special direction. Nevertheless, this could compensated by the good 
properties of $T^{\star}$. For any $f,g\in C_{c}^{\alpha,\beta}(B)$, we have 
\begin{equation}
\int_{B}(Tf)g dx=\int_{B}f T^{\star}g dy.   
\end{equation}
It's easy to show that 
\[T^{\star}g=-D_{y_j}\int_{B}D_{x}\Gamma(x,y)g(x)dx,\]
where $y_{j}$ is a tangential varible in the $y$-component, and $D_{x}$
is an order $1$ differential operator in the $x$-component.

Notice $D_{x}$ can not be integrated by parts in general, since $div \frac{\partial}{\partial r}\neq 0$ and $div\{\frac{1}{r}\frac{\partial}{\partial \theta}\}\neq 0$. Nevertheless,  Lemma \ref{lem Tstar maps calpha to itself}  guarantees that $T^{\star}$ is densely defined in $L^{2}(B)$, which leads to our necessary $L^{2}-$estimate. The proof of the following crucial $L^{2}-$estimate is almost the same as proof i of Theorem 9.9 in \cite{GT}. Nevertheless, since it concerns the correct choice of the Hessian operator to integrate by parts toward, we still present a detailed proof. The  Hessian operator we choose here leads to 
the necessary  $W^{2,p,\beta}$-estimate when $p$ is large. 

\begin{lem}\label{lem W22 estimate of the newtonian potential} Given a ball $B$, suppose $f\in L^{2}(B)$, then $N_{\beta,B}f$ is well-defined. Moreover, 
\[|Tf|^{2}_{L^{2}(B)}\leq C\int_{B}f^2,\]
and 
\[|T^{\star}f|^{2}_{L^{2}(B)}\leq C\int_{B}f^2.\]
\end{lem}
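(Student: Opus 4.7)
The plan is to follow proof (i) of Theorem 9.9 in Gilbarg--Trudinger, but the conical volume form $dV_\beta = \beta r\,dr\,d\theta\,dw$ forces a careful choice of which direction to integrate by parts first. I would first establish the estimate for $f \in C^\infty_c(B)$, then extend to general $f \in L^2(B)$ by density together with the completeness of $W^{2,2,\beta}$ proved in Lemma \ref{lem completeness of W2,p,beta}: a sequence $f_k \to f$ in $L^2$ yields a Cauchy sequence $\mathfrak D N_\beta f_k$ in $L^2$, and the common limit defines $\mathfrak D N_\beta f$ (and hence $N_\beta f$). In the smooth case, Lemma \ref{lem Green's representation} guarantees that $u := N_{\beta,B} f$ solves $\Delta_\beta u = f$ classically on $\C^n\setminus D$ and has Newtonian-type decay at infinity, so integration by parts is legitimate with no contribution at infinity and with controllable behavior near $D$.

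For each $\mathfrak D \in \mathbb T$, the crucial observation is that $\mathfrak D$ factors as $\partial_{w_i}\!\circ P$ with $P \in \{\partial_r,\ \partial_{w_j},\ (1/r)\partial_\theta\}$. The tangential vector field $\partial_{w_i}$ is divergence-free with respect to $dV_\beta$, so integrating by parts first in the $w_i$-direction is clean and gives
\[
\int (\mathfrak D u)^2\, dV_\beta = -\int (Pu)\, \partial_{w_i}^{2}(Pu)\, dV_\beta = -\int (Pu)\, P(\partial_{w_i}^{2} u)\, dV_\beta,
\]
using that $P$ commutes with $\partial_{w_i}^2$. A second integration by parts in the $P$-direction recombines these pieces; summing over all $\mathfrak D \in \mathbb T$ and using $\Delta_\beta u = Ru + \Theta u + \Delta_w u = f$ (where $R = \partial_r^{2} + r^{-1}\partial_r$ and $\Theta = (\beta r)^{-2}\partial_\theta^{2}$ are the radial and angular parts of the Laplacian) produces an expression of the shape $\int f \cdot Qu\, dV_\beta$ for a linear combination $Qu$ of Hessian entries controlled by $|Tf|_{L^2}$. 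Cauchy--Schwarz then closes the loop as $|Tf|_{L^2}^{2} \leq C|f|_{L^2}\,|Tf|_{L^2}$, yielding the bound on $T$. The bound on $T^*$ follows by duality from the identity $\int (Tf)g\,dV_\beta = \int f\,(T^*g)\,dV_\beta$ on the dense class of test functions identified immediately before the lemma.

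The main obstacle is the second integration by parts in the case $P = \partial_r$. With respect to $dV_\beta$, the vector field $\partial_r$ has divergence $1/r$, so moving it across produces extra lower-order $1/r$ contributions and potential boundary terms on the divisor $\{r=0\}$. This is precisely why the outer derivative must be tangential: performing the $\partial_{w_i}$-integration first means that when $\partial_r$ is later moved across, the extra $1/r$ piece pairs against $P\partial_{w_i}^{2}u$ in a way that can be absorbed into the tangential-Laplacian contribution already being summed, while the boundary behavior near $r=0$ is controlled using that $f$ is compactly supported and $u$ is classical away from $D$. Had we instead chosen to integrate by parts first in the radial direction, the $1/r$ loss near the divisor would propagate into every subsequent term and the estimate would fail to close; this is the precise content of the author's remark about the ``correct choice of Hessian operator to integrate by parts toward.''
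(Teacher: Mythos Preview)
Your proposal is essentially correct and follows the paper's strategy: work first with $f\in C^\infty_c$, use Donaldson's Schauder theory to put $N_{\beta}f\in C^{2,\alpha,\beta}$, integrate by parts tangentially first, then in the normal direction, kill boundary terms at infinity by Newtonian decay, and extend to general $f\in L^2$ by density together with the completeness of $W^{2,2,\beta}$.

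The organization differs slightly from the paper's. The paper expands $\int(\Delta w)^2$ directly and obtains the exact identity $\int(\Delta w)^2=\int|\nabla^{1,1,\beta}w|^2$, treating the normal direction as the single complex block $\Delta_{0,\beta}=\beta^{-2}|z|^{2-2\beta}\partial_z\partial_{\bar z}$; you instead handle each $\mathfrak D\in\mathbb T$ in real polar coordinates and close with Cauchy--Schwarz. Both routes land in the same place. Two small corrections are worth noting. First, the $1/r$ term arising from integrating $\partial_r$ by parts is not ``absorbed into the tangential-Laplacian contribution'': it combines with $\partial_r^2$ to form exactly the radial piece $R=\partial_r^2+r^{-1}\partial_r$ of $\Delta_\beta$, which is precisely what you need to reconstitute $f$. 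Second, the boundary behavior at $r=0$ is not controlled by $f$ having compact support---that only helps at infinity---but by the $C^{2,\alpha,\beta}$ regularity of $N_\beta f$ that Donaldson's theory supplies; the paper invokes an auxiliary integration-by-parts lemma (Lemma~2.5 in \cite{WYQ}) for exactly this step. Finally, the paper's remark about the ``correct choice of Hessian'' refers to restricting, in the normal direction, to the complex $(1,1)$ derivative and never to the full real normal Hessian (e.g.\ $\partial_r^2$ in isolation), rather than to the order of integration by parts; your factorization $\mathfrak D=\partial_{w_i}\circ P$ implicitly makes the same choice.
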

\begin{proof}{of Lemma  \ref{lem W22 estimate of the newtonian potential}:}
For any sequence $\epsilon_{k}>0$ such that $\epsilon_{k}\rightarrow 0$, we consider the smoothing of cutoffs of $f$ with parameter $\epsilon_{k}$, denoted as $f_{\epsilon_{k}}$.
The point is that the smoothing and cutoffs work well in the conical case. Namely, the approximation functions $f_{\epsilon_{k}}$ are in $C^{\infty}_{c}(B)$, and $$\lim_{k\rightarrow \infty}|f_{\epsilon_{k}}-f|_{L^{2}(B)}=0.$$
The space $C^{\infty}(B)$ is of compact supported smooth  functions in the polar coordinates, not holomorphic coordinates. 

Step 1: Then we consider $\omega_{\epsilon_{k}}=N_{\beta,B}f_{\epsilon_{k}}$. Then, by the work in Donaldson (also see \cite{ChenWang}), $N_{\beta,B}f_{\epsilon_{k}}\in C^{2,\alpha,\beta}(B)$, thus it makes sense to consider Hessian of $\omega_{\epsilon_{k}}$ in some sense. 
It sufficies to prove 
\begin{equation}\label{equ W22 seminorm identity}\int_{B}f_{\epsilon_{k}}^2\omega_{\beta}^n
=\int_{\C^{n}}|\Delta w_{\epsilon_{k}}|^2\omega_{\beta}^n
=\int_{\C^{n}}|\nabla^{1,1,\beta} w_{\epsilon_{k}}|^2\omega_{\beta}^n,
\end{equation}
where the $\nabla^{1,1,\beta}$ is the Hessian operator whose components are exactly those  in the seminorm (\ref{equ def of seminorm}). This choice integrates well with Definition \ref{def weak conical metrics}.

Then, the integration by parts proceeds line to line as in 
proof $(i)$ of Theorem 9.9 in \cite{GT}. For the sake of a self-contained proof, and of emphasizing the operator $\nabla^{1,1,\beta}$ we choose, we include the detail here. Denote

\[\Delta_{0,\beta}=\frac{|z|^{2-2\beta}}{\beta^2}\frac{\partial^2}{\partial z\partial \bar{z}},\ \omega_{0,\beta}=\frac{\beta^2}{|z|^{2-2\beta}}\frac{\sqrt{-1}}{2} dz\wedge d\bar{z}.\]
Then \begin{equation}
\Delta=4\Delta_{0,\beta}+\Sigma_{j=1}^{2n-2}\frac{\partial^2}{\partial y_{j}^2}.
\end{equation}
Consider  $A(R)=\underline{B}(R)\times \underline{D}(R)$ as  the polycylinder  as the defintion in \ref{equ def of the annulus}. Let $R$
be large enough such that $A(R)\supset supp f$, then 
\begin{eqnarray}\label{equ first step of L2 estimate integration by parts }& &\int_{\underline{B}(R)}\int_{\underline{D}(R)}(\Delta w_{\epsilon_{k}})^2\omega_{0,\beta}\wedge dy_{1}...\wedge dy_{2n-2}\nonumber
\\&=&16\int_{\underline{B}(R)}\int_{\underline{D}(R)}(\Delta_{0,\beta} w_{\epsilon_{k}})^2\omega_{0,\beta}\wedge dy_{1}...\wedge dy_{2n-2}\nonumber
\\& &+8\Sigma_{j=1}^{2n-2}\int_{\underline{B}(R)}\int_{\underline{D}(R)}(\Delta_{0,\beta}w_{\epsilon_{k}})w_{\epsilon_{k},jj}\omega_{0,\beta}\wedge dy_{1}...\wedge dy_{2n-2}\nonumber
\\& &+\Sigma_{i,j=1}^{2n-2}\int_{\underline{B}(R)}\int_{\underline{D}(R)}w_{\epsilon_{k},ii}w_{\epsilon_{k},jj}\omega_{0,\beta}\wedge dy_{1}...\wedge dy_{2n-2}.
\end{eqnarray}

Ignoring the constant coefficient temporarily, it suffices to deal with the second term above. Since $f\in C^{\infty}_{c}(B)$,  and Donaldson's Schauder estimate in \cite{Don} is smooth in the tangential directions, we have 
 \begin{equation}\label{equ the only 3rd derivative in the L2 estimate:}\frac{\partial}{\partial x_{j}}\Delta_{0,\beta} w_{\epsilon_{k}}\in C^{0}[A(R)].
 \end{equation}
 We will show in the below that it's convenient to do integration by parts over these polycylinders, in our case. 

Using the condition (\ref{equ the only 3rd derivative in the L2 estimate:}) and Lemma 2.5 in \cite{WYQ}, the tangential derivatives $\frac{\partial}{\partial y_{j}}$ can be integrated by parts. Hence

\begin{eqnarray}\label{equ 1 integration by parts with boundary term}& &
\Sigma_{j=1}^{2n-2}\int_{\underline{B}(R)}\int_{\underline{D}(R)}(\Delta_{0,\beta}w_{\epsilon_{k}})w_{\epsilon_{k},jj}\omega_{0,\beta}\wedge dy_{1}...\wedge dy_{2n-2}\nonumber
\\&=& -\Sigma_{j=1}^{2n-2}\int_{\underline{B}(R)}\int_{\underline{D}(R)}(\Delta_{0,\beta}w_{\epsilon_{k},j})w_{\epsilon_{k},j}\omega_{0,\beta}\wedge dy_{1}...\wedge dy_{2n-2}\nonumber
\\& &+ \Sigma_{j=1}^{2n-2}\int_{\partial(\underline{B}(R)\times \underline{D}(R))}(\Delta_{0,\beta}w_{\epsilon_{k}})w_{\epsilon_{k},j}n_{j}\omega_{0,\beta}\wedge dy_{1}...\wedge dy_{2n-2}.
\end{eqnarray}
$w_{\epsilon_{k}}\in C^{2,\alpha,\beta}[A({R})]$ implies the tangential-normal mixed derivatives $\nabla_{0,\beta}w_{\epsilon_{k},j}$ are in 
$L^{\infty}[A(R)]$. Moreover,  $\omega_{\beta}$ is a product metric of $\omega_{0,\beta}$ with the Euclidean metric in the tangential directions along $D$.  Then, again, Lemma 2.5 in \cite{WYQ} and Fubini's Theorem   imply we can integrate the $\Delta_{0,\beta}$ on the first $\C$-slice by parts to obtain 
\begin{eqnarray}\label{equ 2 integration by parts with boundary term}& &
-\Sigma_{j=1}^{2n-2}\int_{\underline{B}(R)}\int_{\underline{D}(R)}(\Delta_{0,\beta}w_{\epsilon_{k},j})w_{\epsilon_{k},j}\omega_{0,\beta}\wedge dy_{1}...\wedge dy_{2n-2}\nonumber
\\&=& \Sigma_{j=1}^{2n-2}\int_{\underline{B}(R)}dy_{1}...\wedge dy_{2n-2}\int_{\underline{D}(R)}|\nabla_{0,\beta}w_{\epsilon_{k},j}|^2\omega_{0,\beta}
\\& &-\Sigma_{j=1}^{2n-2}\int_{\underline{B}(R)}dy_{1}...\wedge dy_{2n-2}\int_{\partial{\underline{D}}(R)}<\nu, \nabla_{0,\beta}w_{\epsilon_{k},j}>_{\beta}w_{\epsilon_{k},j}\omega_{0,\beta},\nonumber
\end{eqnarray}
where $\nu$ is the outer-normal of $\partial{\underline{D}}(R)$ with respect to $\omega_{0,\beta}$. Theorem 1.11 in \cite{ChenWang} and the compactly supported property of $f$ impliy
\begin{equation}\label{equ decay of the newton potential}
|\nabla_{0,\beta}w_{\epsilon_{k},j}| \in O(|x|^{-2n}),\ w_{\epsilon_{k},j}\in O(|x|^{-2n+1}),\ |\Delta_{0,\beta}w_{\epsilon_{k}}| \in O(|x|^{-2n}).
\end{equation}

Thus, combing (\ref{equ 1 integration by parts with boundary term}) and (\ref{equ 2 integration by parts with boundary term}), using (\ref{equ decay of the newton potential}), let $R\rightarrow \infty$,  then the boundary terms all tend to $0$, and we obtain the following as in proof (i) of Theorem 9.9 in \cite{GT}.
\begin{eqnarray}\label{equ conclusion integration by parts with boundary terms}& &
\Sigma_{j=1}^{2n-2}\int_{\C^{n}}(\Delta_{0,\beta}w_{\epsilon_{k}})w_{\epsilon_{k},jj}\omega_{0,\beta}\wedge dy_{1}...\wedge dy_{2n-2}\nonumber
\\&=& \Sigma_{j=1}^{2n-2}\int_{\C^{n}}|\nabla_{0,\beta}w_{\epsilon_{k},j}|^2\omega_{0,\beta}\wedge dy_{1}...\wedge dy_{2n-2}.
\end{eqnarray}

Handling the term $\Sigma_{i,j=1}^{2n-2}\int_{\underline{B}(R)}\int_{\underline{D}(R)}w_{\epsilon_{k},ii}w_{\epsilon_{k},jj}\omega_{0,\beta}\wedge dy_{1}...\wedge dy_{2n-2}$ in (\ref{equ first step of L2 estimate integration by parts }) in the similar and easier way, let $R\rightarrow \infty$, then the boundary terms all tend to $0$, and  we deduce from (\ref{equ first step of L2 estimate integration by parts }) and (\ref{equ conclusion integration by parts with boundary terms}) that 
\begin{eqnarray}\label{equ L2 estimate detail version }& &\int_{\C^n}(\Delta w_{\epsilon_{k}})^2\omega_{0,\beta}\wedge dy_{1}...\wedge dy_{2n-2}\nonumber
\\&=&16\int_{\C^n}(\Delta_{0,\beta} w_{\epsilon_{k}})^2\omega_{0,\beta}\wedge dy_{1}...\wedge dy_{2n-2}\nonumber
\\& &+8\Sigma_{j=1}^{2n-2}\int_{\C^n}|\nabla_{0,\beta}w_{\epsilon_{k},j}|^2\omega_{0,\beta}\wedge dy_{1}...\wedge dy_{2n-2}\nonumber
\\& &+\Sigma_{i,j=1}^{2n-2}\int_{\C^n} |w_{\epsilon_{k},ij}|^2\omega_{0,\beta}\wedge dy_{1}...\wedge dy_{2n-2}.
\end{eqnarray}
Thus identity (\ref{equ W22 seminorm identity}) is true for Newtonian potentials of compactly supported smooth functions. 

Step 2: By Young's inequality, since $\Gamma(x,y), \nabla\Gamma(x,y) \in L^{1}(B)$,  (by Donaldson's work \cite{Don}, also see \cite{ChenWang}), we conclude 
\begin{equation}\label{equ L2 seminorm identity}
|w_{\epsilon_{k_1}}-w_{\epsilon_{k_2}}|_{L^2{(B)}}
\leq |\int_{B}\Gamma(x,y)(f_{\epsilon_{k_1}}(y)-f_{\epsilon_{k_2}}(y))dy|_{L^2{(B)}}\leq |f_{\epsilon_{k_1}}-f_{\epsilon_{k_2}}|_{L^2{(B)}},
\end{equation}
and \begin{eqnarray}\label{equ W12 seminorm identity}& &|\nabla w_{\epsilon_{k_1}}-\nabla w_{\epsilon_{k_2}}|_{L^2{(B)}}\nonumber
\\&\leq & |\int_{B}[\nabla_{x}\Gamma(x,y)](f_{\epsilon_{k_1}}(y)-f_{\epsilon_{k_2}}(y))dy|_{L^2{(B)}}\nonumber
\\&\leq & |f_{\epsilon_{k_1}}-f_{\epsilon_{k_2}}|_{L^2{(B)}}.
\end{eqnarray}

Then, since $f_{\epsilon_{k}}\rightarrow f$ in the $L^2(B)$-sense, 
then $w_{\epsilon_{k}}$ is a Cauchy-Sequence  in $W^{2,2,\beta}(B)-$ space. Thus, by the completeness of the $W^{2,2,\beta}(B)-$space, there exists a $w\in W^{2,2,\beta}(B)$ such that 
\[\lim_{k\rightarrow \infty}|w_{\epsilon_{k}}-w|_{W^{2,2,\beta}(B)}=0.\]
Then, we define $w=N_{\beta,B}f$. By (\ref{equ W12 seminorm identity}), (\ref{equ W22 seminorm identity}), and (\ref{equ L2 seminorm identity}), the proof of Lemma \ref{lem W22 estimate of the newtonian potential} is complete.  
\end{proof}
\begin{rmk} The feature of the $\nabla^{1,1,\beta}$ operator we consider in (\ref{equ W22 seminorm identity}) is: it is just the usual real Hessian in the tangential direction of $D$, it contains all the mixed derivatives. But, in the normal direction of $D$, it only contains the complex $(1,1)$ derivative. This is the one of the main points of this article: with this slightly weaker Hessian, we  obtain  $W^{2,p,\beta}-$estimates for all $p\in (1,\infty)$.  We don't think any $W^{2,p}-$theory for the real Hessian $\nabla^2$ (of $\omega_{\beta}$)  could be true when $p$ is large. 
\end{rmk}
   
\section{The Calderon-Zygmund inequalities. \label{section Calderon Zygmund and potential}}
In this section, we use the  Euclidean metric $\omega_{E}$ in the polar coordinates to define the distances and balls, for the sake of the cube-decomposition.   We show that with the help of Theorem 1.11 in \cite{ChenWang}, the Calderon-Zygmund theory in \cite{CalderonZygmund}
works suprisingly well in the conical setting, after overcoming a technical difficulty. Namely, the main technical difficulty is that $T$
is not selfadjoint. However, as presented below, this difficulty can be 
easily overcomed, by observing that $T^{\star}$ (the dual of $T$) also 
possess similar good properties as the Calderon-Zygmund singular integral operators.  We follow the proof of Theorem 9.9 in \cite{GT}.
\begin{lem}\label{lem T is weakly 1,1}  Let $B$ be a ball with finite radius.    The operator $T$ is weakly-$(1,1)$ bounded i.e  for any $f\in L^{2}(B)$, we have
\begin{equation}
\mu_{Tf}(t)\leq \frac{C}{t}|f|_{L^{1}(B)},
\end{equation}
and \begin{equation}
\mu_{T^{\star}f}(t)\leq \frac{C}{t}|f|_{L^{1}(B)},
\end{equation}
where $C$ only depend on $\beta$ and $n$.
\end{lem}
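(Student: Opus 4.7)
The plan is to adapt the Calderon-Zygmund decomposition argument from the proof (ii) of Theorem 9.9 in \cite{GT}. Since we are working with $\omega_E$ in polar coordinates, the standard dyadic cube decomposition applies directly; the quasi-isometry $\beta\omega_{\beta}\leq \omega_E \leq \omega_{\beta}/\beta$ transfers the final bound back to the conical setting, with constants depending only on $n$ and $\beta$.

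First, for $f \in L^2(B)$ and height $t > 0$, I perform the $\omega_E$-cube decomposition of $f$ at level $t$, writing $f = g + b$ where $|g|\leq Ct$ a.e., and $b = \sum_k b_k$ with $\mathrm{supp}\, b_k \subset Q_k$, $\int_{Q_k} b_k\,\omega_E^n = 0$, and $\sum_k |Q_k|_E \leq C t^{-1} |f|_{L^1(B)}$. Let $E = \bigcup_k Q_k^*$ denote the union of appropriately dilated cubes, so that $|E|_E \leq C t^{-1}|f|_{L^1(B)}$. Splitting $\mu_{Tf}(t) \leq \mu_{Tg}(t/2)+\mu_{Tb}(t/2)$, the good part is handled by Chebyshev combined with the $L^2$-bound from Lemma \ref{lem W22 estimate of the newtonian potential}:
\[\mu_{Tg}(t/2) \leq \frac{4}{t^2}|Tg|^2_{L^2(B)} \leq \frac{C}{t^2}|g|^2_{L^2(B)} \leq \frac{C}{t^2}\cdot Ct\cdot |g|_{L^1(B)} \leq \frac{C}{t}|f|_{L^1(B)}.\]

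Second, for the bad part I bound $\mu_{Tb}(t/2)\leq |E|_E + \frac{2}{t}\int_{B\setminus E}|Tb|\,\omega_E^n$ and use the cancellation $\int_{Q_k} b_k = 0$ to rewrite, for $x\notin E$,
\[Tb(x) = \sum_k\int_{Q_k}[K(x,y)-K(x,\bar y_k)]b_k(y)\,\omega_E^n(y),\]
where $K(x,y)=\mathfrak{D}_x\Gamma(x,y)$ and $\bar y_k$ is the $\omega_E$-center of $Q_k$. The crux is the H\"ormander-type estimate
\[\int_{B\setminus Q_k^*}|K(x,y)-K(x,\bar y_k)|\,\omega_E^n(x)\leq C,\quad y\in Q_k,\]
which reduces by the mean value inequality to the pointwise bound $|\nabla_y K(x,y)|\leq C\, d_E(x,y)^{-2n-1}$ on the conical Green function. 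These derivative estimates come from Theorem 1.11 of \cite{ChenWang}, read in $\omega_E$ via quasi-isometry. Fubini then yields $\int_{B\setminus E}|Tb|\,\omega_E^n \leq C|b|_{L^1(B)}\leq C|f|_{L^1(B)}$, completing the weak-$(1,1)$ bound for $T$.

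Third, for $T^\star$ the formal identity $T^\star g(y)=-D_{y_j}\int_B D_x\Gamma(x,y)g(x)\,\omega_E^n(x)$ places the two derivatives on opposite sides of $\Gamma$, so the same Calderon-Zygmund scheme applies, but now the H\"ormander condition must be verified in the $y$-variable. The main obstacle, and where I expect most of the work to go, is to control the mixed third derivative $\nabla_y^2\nabla_x\Gamma$ uniformly: the symmetry $\Gamma(x,y)=\Gamma(y,x)$ visible from the heat kernel formula (\ref{eqn heat kernel formula}) lets one trade $y$-derivatives for $x$-derivatives and appeal again to Theorem 1.11 of \cite{ChenWang}. A subtle point requiring care is that $y$ and $\bar y_k$ can have very different angular coordinates $\theta$ (they may lie on opposite sides of the divisor $D$ even while $\omega_E$-close); the needed decay of the angular portion of the kernel is guaranteed by the exponential factors $e^{-a/t}$ already exploited in the proof of Lemma \ref{lem boundary int of Green func tends to 1}.
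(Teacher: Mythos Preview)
Your overall Calderon--Zygmund scaffolding (cube decomposition in $\omega_E$-polar coordinates, good/bad split, Chebyshev plus the $L^2$ bound for $g$, cancellation for $b$) matches the paper's proof exactly. The gap is in your kernel estimate for the bad part. You reduce the H\"ormander condition to the uniform pointwise bound $|\nabla_y K(x,y)|\leq C\,d_E(x,y)^{-2n-1}$ via the mean value inequality; this is a third-derivative bound on $\Gamma$ and it is \emph{not} available uniformly near the divisor when $\beta>\tfrac12$. Theorem~1.11 of \cite{ChenWang} supplies second-derivative decay of $\Gamma$, not third, and in the normal direction to $D$ the kernel $K$ is only H\"older in $y$ with exponent $\rho_0=\min(\tfrac1\beta-1,1)<1$.

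The paper handles this in two moves you are missing. First, for $T$ itself, it uses tangential translation invariance of $\omega_\beta$ to rewrite $\mathfrak D_x\Gamma(x,y)=\mathfrak D'\Gamma(x,y)$ with $\mathfrak D'\in\mathfrak M$, i.e.\ one derivative on each variable with the $y$-derivative tangential; this is what lets you differentiate once more in $y$ at all. Second, instead of a uniform Lipschitz bound, the paper splits into two cases according to whether the cube center $\bar y$ is close to $D$ relative to $|x-\bar y|$. Away from $D$ (Case~2) it invokes Lemma~7.5 of \cite{ChenWang} to get the Lipschitz estimate you wrote, yielding decay $|x-\bar y|^{-(2n+1)}$. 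Near $D$ (Case~1) it uses only the H\"older bound of Lemma~\ref{lem 3rd derivative of Green function bounded}, giving decay $|x-\bar y|^{-(2n+\rho_0)}$; since $\rho_0>0$ this is still integrable against $D_l^{\rho_0}$ outside the dilated cube. Your proposal for $T^\star$ actually aligns better with this picture (derivatives already split across $x,y$), and indeed the paper remarks that the two-case split is written precisely so that one argument covers both $T$ and $T^\star$; but you should replace your mean-value/Lipschitz step with the H\"older-exponent-$\rho_0$ estimate near $D$ in both cases.
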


\begin{proof}{of Lemma \ref{lem T is weakly 1,1}:}
In the polar coordinates, with respect to the Euclidean metric,  we consider a cube $K_0$ (with respect to the Euclidean metric) big enough so that the following holds.  For every $K$ in the first $[\frac{(10n)^{10n}}{\beta}]$ (the smallest integer bigger than $\frac{(10n)^{10n}}{\beta}$) dyadic cut of $K$, $\int_{K}|f|\leq tK$. Exactly as in Theorem 9.9 in \cite{GT}, we consider the dyadic cuts of 
$K_{0}$ subject to $f$ and $t$. Then we obtain cubes $K_{l},\ l=1,2...$ such that 
\begin{equation}\label{equ L1 norm of f on bad cubes}
t|K_{l}|\leq \int_{K_{l}}|f|\leq 2^{2n}t|K_{l}|, \ \textrm{for all}\ l,
\end{equation}
and 
\begin{equation}\label{equ f small almost everywhere away from bad cubes}
f\leq t\ \textrm{almost everywhere over}\ G=K_{0}\setminus F,
\end{equation}
where $F=\cup_{l}K_{l}$.

Then we consider the "good" and "bad" decomposition of $f$ as 
$f=g+b$ such that
\begin{displaymath}b=\left \{
\begin{array}{ccr}& 0,\ \textrm{over}\ G;\\
 & b_{l},\   \frac{1}{|K_{l}|}\int_{K_{l}}b_{l}=0.
 \end{array} \right.
\end{displaymath}
and 
\begin{displaymath}g=\left \{
\begin{array}{ccr}& f,\ \textrm{over}\ G;\\
 & \frac{1}{|K_{l}|}\int_{K_{l}}|f| ,\ \textrm{over}\ K_{l}.
 \end{array} \right.
\end{displaymath}
Thus, (\ref{equ L1 norm of f on bad cubes}) and (\ref{equ f small almost everywhere away from bad cubes}) imply 
\begin{equation}\label{equ good part is bounded}
|g|_{L^{\infty}(K_{0})}\leq 2^{2n}t.
\end{equation}
We define $\mu_{Tf}(t)=m\{x\in K_{0}|f(x)>t\} $. Then 
\begin{equation}
\mu_{Tf}(t)\leq \mu_{Tg}(\frac{t}{2})+\mu_{Tb}(\frac{t}{2}).
\end{equation}
As in the proof Theorem 9.9 in \cite{GT}, by Lemma  \ref{lem W22 estimate of the newtonian potential}, we estimate $\mu_{Tg}(\frac{t}{2})$ as 
\begin{equation}\label{equ estimate of mu of the good part}
\mu_{Tg}
\leq  \frac{4}{t^2}\int_{K_{0}}g^2\leq \frac{2^{2n+2}}{t}\int_{K_{0}}g\leq \frac{2^{2n+2}}{t}\int_{K_{0}}|f|.
\end{equation}
\begin{equation}
T{b_{l}}=\int_{K_{l}}\mathfrak{D}\Gamma(x,y)b_{l}(y)dy
\end{equation}
is well-defined when $x\notin K_{l}$. At this stage, actually for any $\mathfrak{D}\in \mathfrak{T}$, there exists a $\mathfrak{D}^{\prime}\in \mathfrak{M}$ such that 
\begin{equation}
\mathfrak{D}\Gamma(x,y)=\mathfrak{D}^{\prime}\Gamma(x,y).
\end{equation}
This is by the translation invariance of the model metric $\omega_{\beta}$ in the directions tangential to $D$. To see this, for example, take $\mathfrak{D}=\frac{\partial^2}{\partial r_{x}x_{j}}\in \mathfrak{T}$, where both the derivatives act on $x$. Notice that in (\ref{eqn heat kernel formula}), in the $D$-tangential directions, the heat kernel only depends on $|\underline{x}-\underline{y}|$, which means 
\begin{equation}
\frac{\partial^2}{\partial r_{x}\partial x_{j}}\Gamma(x,y)=-\frac{\partial^2}{\partial r_{x}\partial y_{j}}\Gamma(x,y).
\end{equation}
Notice that the biggest feature of $\mathfrak{D}^{\prime}$ is that the two derivatives are distributed to different variables, and Lemma \ref{lem 3rd derivative of Green function bounded} holds for them. 

Hence, 
 using  $\frac{1}{|K_{l}|}\int_{K_{l}}b_{l}=0$, we have 
\begin{equation}\label{equ formula of Tb away from the bad cube}
T{b_{l}}(x)=\int_{K_{l}}[\mathfrak{D}^{\prime}\Gamma(x,y)-\mathfrak{D}^{\prime}\Gamma(x,\bar{y})]b_{l}(y)dy,
\end{equation}
where $\bar{y}$ is the center of $K_{l}$, and 
$x\notin B_{\bar{y}}(\frac{10^{10}D_{l}}{\beta})$, $D_{l}$ is the diameter of $K_{l}$.

Case 1: Suppose $dist(\bar{y},\{z=0\})< \frac{\beta^2dist(\bar{y},x)}{1000}$, then using Lemma   \ref{lem 3rd derivative of Green function bounded}, we have 
\begin{equation}
|T{b_{l}}|\leq  \int_{K_{l}}\frac{C|y-\bar{y}|^{\rho_{0}}}{|x-\bar{y}|^{2n+\rho_{0}}}|b_{l}|dy\leq C D_{l}^{\rho_{0}}\int_{K_{l}}|x-\bar{y}|^{-(2n+\rho_{0})}|b_{l}|dy.
\end{equation}
\begin{eqnarray}
& &\int_{K_0\setminus B_{\bar{y}}(\frac{10^{10}D_{l}}{\beta})}|T b_{l}(x)|dx\nonumber
\\&
\leq & C D_{l}^{\rho_{0}}\int_{K_{l}}|b_{l}|dy
\int_{K_0\setminus B_{\bar{y}}(\frac{10^{10}D_{l}}{\beta})}|x-\bar{y}|^{-(2n+\rho)}dx\nonumber
\\&\leq & C\int_{K_{l}}|b_{l}|dy \{ D_{l}^{\rho_{0}}\nonumber
\int_{|a|\geq D_{l}}|a|^{-(2n+\rho)}da\}
\\&= &  C\int_{K_{l}}|b_{l}|dy.
\end{eqnarray}
Case 2: 
Suppose $dist(\bar{y},\{z=0\})>\frac{ \beta^2 dist(\bar{y},x)}{1000}$, then using Lemma   7.5 in \cite{ChenWang} (with the condition $P(y)\geq \frac{\beta^4|x-y|}{100^{100}}$), we have 
\begin{equation}
|T{b_{l}}|\leq  \int_{K_{l}}\frac{C|y-\bar{y}|}{|x-\widehat{y}_{x}|^{2n+1}}|b_{l}|dy\leq C D_{l}\int_{K_{l}}|x-\widehat{y}_{x}|^{-(2n+1)}|b_{l}|dy,
\end{equation}
where $\widehat{y}_{x}$ is a point in the line segment connecting 
$y$ and $\bar{y}$. 
Thus, 
\begin{eqnarray}\label{equ estimate Tb}
& &\int_{K_0\setminus B_{\bar{y}}(\frac{10^{10}D_{l}}{\beta})}|T b_{l}(x)|dx\nonumber
\\&
\leq & C D_{l}\int_{K_{l}}|b_{l}|dy
\int_{K_0\setminus B_{\bar{y}}(\frac{10^{10}D_{l}}{\beta})}|x-\widehat{y}_{x}|^{-(2n+1)}dx\nonumber
\\&
\leq & C D_{l}\int_{K_{l}}|b_{l}|dy\nonumber
\int_{K_0\setminus B_{\bar{y}}(\frac{10^{10}D_{l}}{\beta})}|x-\bar{y}|^{-(2n+1)}dx
\\&\leq & C\int_{K_{l}}|b_{l}|dy \{ D_{l}
\int_{|a|\geq D_{l}}|a|^{-(2n+1)}da\}\nonumber
\\&= &  C\int_{K_{l}}|b_{l}|dy\leq C\int_{K_{l}}|f|dy.
\end{eqnarray}
\begin{rmk}The reason we have so many $\beta$'s in the proof is that in this section  we use the distance and cubes with respect to the Euclidean metrics $\omega_{E}$ in the polar coordinates, but in \cite{ChenWang} we use the cone distances with respect to $\omega_{\beta}$. Their relation is 
\[\beta\omega_{\beta}\leq \omega_{E}\leq \frac{\omega_{\beta}}{\beta}.\]
We can only consider cubes with respect to the reference metric $\omega_{E}$.
\end{rmk}
Thus, by the argument in page 234 of \cite{GT}, we deduce
\begin{equation}\label{equ estimate of the mu of the bad part}
\mu_{Tb}(f)\leq \frac{C}{t}|f|_{L^{1}(B)}.
\end{equation}

Combining (\ref{equ estimate of the mu of the bad part}) and (\ref{equ estimate of mu of the good part}), the proof of Lemma \ref{lem T is weakly 1,1} on the opeartors $T\in \mathfrak{T}$ is complete. Using exactly the proof above,  the estimate on the adjoint operator $T^{\star}$ follows. Actually we have a slightly shorter proof for $T$ (which does not require dividing the situation into 2 cases). However, since we want a single proof to work for both $T$ and $T^{\star}$, we only present the longer proof above.

\end{proof}

The following lemma is only needed in proving $T^{\star}f$ is densely defined in $L^{p}$, combined with the other results of this article.  Though not fully needed in the proof of the results in the introduction,  we think it has its own interest.  
\begin{lem}\label{lem Tstar maps calpha to itself}$T^{\star}$ is bounded linear map from $C^{\alpha,\beta}$ to 
itself, for all $\alpha<\min\{\frac{1}{\beta}-1,1\}$.
\end{lem}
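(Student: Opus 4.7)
The plan is to realize $T^{\star}g(y)$ as a singular integral with kernel $K(x,y)=-D_{y_j}[D_x\Gamma(x,y)]$ and then to run the textbook Calder\'on--Zygmund H\"older estimate, with the two-case split from Lemma \ref{lem T is weakly 1,1} accounting for the position of $y$ relative to the divisor $D$. Specifically, bringing $D_{y_j}$ inside the integral as a principal value gives
\[
T^{\star}g(y)\;=\;\mathrm{p.v.}\!\int_B K(x,y)\bigl(g(x)-g(y)\bigr)\,dx\;+\;c(y)\,g(y),\qquad c(y):=\mathrm{p.v.}\!\int_B K(x,y)\,dx.
\]
The free term $c(y)g(y)$ is bounded and H\"older in $y$ by elementary arguments, so all the content lies in the H\"older analysis of the principal-value integral.

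The central input is two kernel estimates. The size bound $|K(x,y)|\leq C\,d_{E}(x,y)^{-2n}$ follows from Theorem 1.11 of \cite{ChenWang}. The H\"older-in-$y$ refinement $|K(x,y_1)-K(x,y_2)|\leq C\,d_{E}(y_1,y_2)^{\rho_0}\,d_{E}(x,y_1)^{-2n-\rho_0}$ for $d_{E}(x,y_1)\geq 2\,d_{E}(y_1,y_2)$ is the delicate one, and it splits into the same two cases as in the proof of Lemma \ref{lem T is weakly 1,1}: when $\mathrm{dist}(y,D)\gtrsim \beta^{2}d_{E}(x,y)$, Lemma~7.5 of \cite{ChenWang} controls $\nabla_{y}K$ by $d_{E}^{-2n-1}$; when $\mathrm{dist}(y,D)\ll d_{E}(x,y)$, Lemma \ref{lem 3rd derivative of Green function bounded} supplies the H\"older exponent $\rho_0$ directly. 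The tangential translation invariance of $\omega_{\beta}$ along $D$ is what allows swapping a tangential derivative between $x$ and $y$ to bring the kernel into a form to which these lemmas apply.

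With these kernel bounds in hand, the $C^{\alpha,\beta}$ estimate follows the classical pattern: the $L^{\infty}$ bound pairs the size estimate with $|g(x)-g(y)|\leq [g]_{C^{\alpha,\beta}}d_{E}(x,y)^{\alpha}$, giving integrand size $d_{E}^{-2n+\alpha}$; for the H\"older seminorm at a pair $(y_1,y_2)$, one partitions $B$ at radius $\sim d_{E}(y_1,y_2)$ around $y_1$ and controls the near piece by kernel size plus H\"older continuity of $g$, and the far piece by the kernel H\"older bound integrated along the segment from $y_1$ to $y_2$. The main obstacle I expect is justifying the principal-value formulation in the first place, since the paragraph immediately preceding the statement notes that $D_x$ may be non-divergence-free ($\partial/\partial r$ or $r^{-1}\partial/\partial\theta$), making direct integration by parts unavailable. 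I would address this by first approximating $g$ by mollifications $g_\epsilon\in C^{\infty}_c(B\setminus D)$ supported away from $D$, where Donaldson's smooth conical Schauder theory in \cite{Don} gives $T^{\star}g_\epsilon\in C^{\alpha,\beta}$ with the uniform kernel-based bound, and then passing to the limit in $C^{\alpha,\beta}$; the routine bookkeeping of the factors of $\beta$ coming from switching between $\omega_\beta$ and $\omega_E$ proceeds as in the proof of Lemma \ref{lem T is weakly 1,1}.
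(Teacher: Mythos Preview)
Your proposal is correct and aligns with the paper's own proof, which is a one-line deferral: ``Using Theorem 1.11 in \cite{ChenWang} and Lemma \ref{lem 3rd derivative of Green function bounded}, the proof is exactly as in Proposition 5.3 in \cite{ChenWang}.'' You have essentially unpacked what that referenced proposition does---the standard singular-integral Schauder argument with kernel size and H\"older bounds---using precisely the two ingredients the paper names, plus the same two-case split (via Lemma~7.5 of \cite{ChenWang}) that already appears in the proof of Lemma \ref{lem T is weakly 1,1}.
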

\begin{proof}{of Lemma \ref{lem Tstar maps calpha to itself}:}

Using Theorem 1.11 in \cite{ChenWang} and  Lemma \ref{lem 3rd derivative of Green function bounded}, the proof is exactly as in Proposition 5.3 in \cite{ChenWang}.

\end{proof}

\begin{Def}\label{def of mixed derivatives acting on different variables} Similar to the definition in (\ref{Def of mathbb T}), we define the mixed derivative operators as \begin{eqnarray*}\mathfrak{M}&=&\{D_{x}D_{y_{j}},\ y_{j}\ \textrm{is a tangential variable to}\ D; D_{y}D_{x_{j}},\ x_{j}
\\& & \textrm{is a tangential variable to}\ D.\}
\end{eqnarray*}
The feature of this set of operators is that the two derivatives are distributed to  different variables. 
\end{Def}
\begin{lem}\label{lem 3rd derivative of Green function bounded}
For any second order spatial derivative operator $\mathfrak{D}\in \mathfrak{M}$. Suppose $\rho_{0}= \min(\frac{1}{\beta}-1,\ 1)$, $|x|=1$ and $|v_1|,\ |v_2|< \frac{1}{8}$, we have
$$|\mathfrak{D}\Gamma(x,v_1)-\mathfrak{D}\Gamma(x,v_2)|\leq C|v_1-v_2|^{\rho_{0}}.$$

\end{lem}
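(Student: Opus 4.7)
The plan is to reduce the claim to a pointwise bound on one additional $v$-derivative of $\mathfrak{D}\Gamma(x,v)$, exploiting that the hypotheses $|x|=1$, $|v_i|<1/8$ force the separation $|x-v_i| \geq 7/8$, so we remain in an off-diagonal regime where the Green's function is smooth away from the divisor. The H\"older exponent $\rho_0 = \min(1/\beta - 1, 1)$ is exactly the exponent appearing in the $C^{\alpha,\beta}$-regularity theory for conic Newtonian potentials, and what allows us to invoke it here is the special structure of $\mathfrak{M}$: one of the two derivatives is \emph{tangential} to $D$, and it acts on a variable \emph{different} from where the other derivative lands.

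The first step is to write $\mathfrak{D}\Gamma(x,y) = -\int_0^\infty \mathfrak{D}H(x,y,t)\,dt$ using the explicit heat-kernel formula (\ref{eqn heat kernel formula}). Differentiation under the integral sign is justified because, in our separated regime, every spatial derivative of $H$ is dominated by $C t^{-n-k/2}\exp(-|x-y|^2/(8t))$ with $|x-y| \geq 7/8$, yielding absolute convergence of the integral for all relevant orders of differentiation. The structural observation, already used in the proof of Lemma \ref{lem T is weakly 1,1}, is that because the heat kernel depends on the tangential coordinates only through their differences, a tangential derivative on $y$ equals minus the corresponding tangential derivative on $x$; this means every $\mathfrak{D}\in\mathfrak{M}$ can be rewritten as a pure second derivative in $x$ alone (with at least one factor tangential), acting on a function that is smooth in both variables in the off-diagonal region.

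Next, to upgrade the pointwise bound to a H\"older bound in $v$, I would take one more $v$-derivative of $\mathfrak{D}\Gamma$. Tangential $v$-derivatives behave well, but a radial or angular $v$-derivative lands on the conic portion of the heat kernel and produces a factor of order $|v|^{\rho_0-1}$ as $v\to D$; this singular behavior is precisely what Theorem 1.11 in \cite{ChenWang} quantifies for third spatial derivatives of $\Gamma$ in the off-diagonal region. Given such an estimate, the H\"older bound on $\mathfrak{D}\Gamma(x,\cdot)$ follows by path integration: when $\rho_0=1$ a uniform third-derivative bound along a straight segment from $v_1$ to $v_2$ yields Lipschitz directly; when $\rho_0<1$ one splits the segment into tangential and radial/angular components and integrates the singular radial bound $|v|^{\rho_0-1}$, producing exactly the H\"older exponent $\rho_0$.

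The main obstacle will be bookkeeping the coordinate-change factors between the cone frame and the Euclidean polar frame when estimating the third derivative near the divisor, and correctly accounting for the loss of one derivative of regularity in the normal direction. However, this is precisely the content of Theorem 1.11 in \cite{ChenWang}, which I would invoke here rather than reprove, exactly in the spirit of the parallel application already made in the proof of Lemma \ref{lem Tstar maps calpha to itself}.
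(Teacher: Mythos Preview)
Your proposal is correct and aligns with the paper's own treatment: the paper's proof is a single sentence, ``It's an easier version of the arguments in section 8 in \cite{ChenWang},'' and your sketch is precisely an unpacking of that deferral---working through the heat-kernel representation, exploiting the off-diagonal separation $|x-v_i|\geq 7/8$, and invoking the third-derivative bounds from \cite{ChenWang} (your Theorem~1.11 and the paper's section~8 refer to the same body of estimates) to obtain the H\"older exponent $\rho_0$. The only minor caveat is that your conversion ``every $\mathfrak{D}\in\mathfrak{M}$ becomes a pure $x$-derivative'' is not needed and not quite uniform across $\mathfrak{M}$ (for $D_y D_{x_j}$ the $D_y$ factor may be radial or angular in $y$, not tangential), but this does not affect the argument since the relevant third-derivative/H\"older bounds in \cite{ChenWang} already handle mixed-variable operators directly.
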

\begin{proof}{of Lemma \ref{lem 3rd derivative of Green function bounded}:} It's an easier version of the arguments in section 8 in \cite{ChenWang}.

\end{proof}

\section{$W^{2,p}$ and $C^{1,\alpha,\beta}$ estimates with $L^{p}$-right hand side. \label{section W2p and C1alpha}}
In this section, we prove Theorem \ref{thm W2p estimate of general L with small oscillation} by proving Theorem \ref{thm W2p estimate of compact supported W2,p functions}, and  we also prove  Theorem \ref{thm C1alpha estimate with Lp right  hand side}. These 3 theorems are the main technical building blocks of the local regularity results in 
Theorem \ref{thm general regularity of conical Monge-Ampere equations} and Corollary \ref{Cor regularity of weak KE metrics}.

\begin{proof}{of Theorem \ref{thm W2p estimate of general L with small oscillation}:} Suppose $L=a^{i\bar{j}}\frac{\partial^2}{\partial z_i\bar{z}_{j}}$, by multiplying a cutoff function $\eta^2$, we compute 
\begin{eqnarray*}& &\Delta_{\beta}(\eta^2 u)
\\&=&(\Delta_{\beta}-L)(\eta^2 u)+L(\eta^2 u)
\\&=&(\Delta_{\beta}-L)(\eta^2 u)
+2Re a^{i\bar{j}}(\eta^2)_{i}(u)_{\bar{j}}+a^{i\bar{j}}(\eta^2)_{i\bar{j}}u+\eta^2 f.
\end{eqnarray*}
Using Theorem \ref{thm W2p estimate of compact supported W2,p functions}, we deduce
\begin{eqnarray*}
& &[\eta^2 u]_{W^{2,p,\beta}, B(2)}
\\&\leq & C|\Delta_{\beta}(\eta^2 u)|_{L^{p}, B(2)}
\\&\leq &C |(\Delta_{\beta}-L)(\eta^2 u)|_{L^{p}, B(2)}
+ C|2Re a^{i\bar{j}}(\eta^2)_{i}(u)_{\bar{j}}|_{L^{p}, B(2)}
\\& &+ C|a^{i\bar{j}}(\eta^2)_{i\bar{j}}u|_{L^{p}, B(2)}
+C |\eta^2 f|_{L^{p}, B(2)}
\\&\leq &C\delta_{0}[\eta^2 u]_{W^{2,p,\beta}, B(2)}+C| u|_{W^{1,p,\beta}, B(2)}+|\eta^2 f|_{L^{p}, B(2)}.
\end{eqnarray*}
Choosing $\delta_{0}\leq \frac{1}{2C}$, and $\eta$ be the cutoff function such that 
\[\eta\equiv 1\ \textrm{over}\ B(1);\ \eta=0\ \textrm{over}\ B(2)\setminus B(\frac{3}{2}),\] the desired conclusion in Theorem \ref{thm W2p estimate of general L with small oscillation}
follows. 

\end{proof}

 \begin{thm}\label{thm W2p estimate of compact supported W2,p functions}
Let $B$ be a ball in $C^{n}$. Then $T$ is a bounded linear map from $L^{p}(B)$ to itself, for $p\in (1,\infty)$. i.e for all $f\in L^{p}(B) $, we have 
\[|T f|_{L^{p}(B)}\leq |f|_{L^{p}(B)}.\]

 Consequently, let $u\in W^{2,p,\beta}_{c}(B)\cap C^{2}(B\setminus D),\ p\in [2,\infty)$, then 
\begin{equation*}
[u]_{W^{2,p,\beta}(B)}\leq C |\Delta_{\beta} u|_{L^p(B)},
\end{equation*}
where $C$ only depend on $n,\beta,p$.
\end{thm}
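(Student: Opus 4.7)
The plan is to argue in two stages: first establish that $T$ is bounded on $L^{p}(B)$ for every $p\in(1,\infty)$ (combining Marcinkiewicz interpolation with a duality argument), then derive the Sobolev estimate by applying this to the Green's representation formula.

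\textbf{Stage 1.} Lemma \ref{lem W22 estimate of the newtonian potential} provides the strong $(2,2)$-bound for both $T$ and $T^{\star}$, and Lemma \ref{lem T is weakly 1,1} provides the weak $(1,1)$-bound for both. The Marcinkiewicz interpolation theorem then gives $L^{p}$-boundedness of $T$ and of $T^{\star}$ for every $p\in(1,2]$. Because $T$ is not self-adjoint, the range $p\in(2,\infty)$ cannot be reached by interpolating $T$ itself. Instead I apply Marcinkiewicz to $T^{\star}$ to obtain $L^{q}$-boundedness of $T^{\star}$ for the conjugate exponent $q=p/(p-1)\in(1,2)$, and then invoke the duality identity $\int(Tf)g\,dx=\int f(T^{\star}g)\,dy$ on the test class $C_{c}^{\alpha,\beta}(B)$. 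Combined with the density of $C_{c}^{\infty}$-in-polar-coordinates (which sits inside $C_{c}^{\alpha,\beta}(B)$ and is dense in every $L^{r}(B)$), the pairing extends by continuity and yields $|Tf|_{L^{p}}\le C|f|_{L^{p}}$ for all $p\in[2,\infty)$.

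\textbf{Stage 2.} Given $u\in W^{2,p,\beta}_{c}(B)\cap C^{2}(B\setminus D)$ with $p\ge 2$, I extend $u$ by zero to $\C^{n}$; since $p\ge 2$ and $u$ is compactly supported in $B$, the extension lies in $W^{2,2,\beta}_{c}(\C^{n})\cap C^{2}(\C^{n}\setminus D)$, so Lemma \ref{lem Green's representation} gives $u=N_{\beta,\C^{n}}(\Delta_{\beta}u)$. Applying each operator in $\mathbb{T}$ to this representation expresses every mixed or purely tangential second derivative in the seminorm (\ref{equ def of seminorm}) --- the terms $|z|^{1-\beta}\partial_{z}\partial_{w_{j}}u$ (which in polar coordinates is a linear combination of $\partial_{r}\partial_{w_{j}}u$ and $(1/r)\partial_{\theta}\partial_{w_{j}}u$) and $\partial_{w_{i}}\partial_{w_{j}}u$ --- as $T(\Delta_{\beta}u)$ for an appropriate $T$ from (\ref{equ definition of the operator T}). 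Stage 1 then bounds each of these in $L^{p}$ by $C|\Delta_{\beta}u|_{L^{p}}$. The remaining purely normal entry $|z|^{2-2\beta}\partial_{z}\partial_{\bar z}u$ is not of this form, but it is read off the equation itself: $\Delta_{\beta}u=(|z|^{2-2\beta}/\beta^{2})\partial_{z}\partial_{\bar z}u+\sum_{j}\partial_{v_{j}}\partial_{\bar v_{j}}u$, and since the tangential sum on the right is already controlled in $L^{p}$, so is the normal term.

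The main obstacle is the duality step in Stage 1. Because $T$ is not self-adjoint, direct interpolation cannot reach $p>2$, so one must interpolate $T^{\star}$ and transfer the bound back through the pairing. This transfer requires that the pairing live on a test class which is simultaneously invariant under $T^{\star}$ --- furnished by Lemma \ref{lem Tstar maps calpha to itself} --- and dense in the relevant $L^{p}$ spaces; verifying these compatible properties in the conical setting (where $T^{\star}$ lacks the clean self-adjoint structure of the classical Calderon--Zygmund case) is the essential new ingredient.
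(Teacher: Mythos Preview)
Your proposal is correct and follows essentially the same approach as the paper: Marcinkiewicz interpolation between the $L^{2}$ bound (Lemma \ref{lem W22 estimate of the newtonian potential}) and the weak $(1,1)$ bound (Lemma \ref{lem T is weakly 1,1}) for both $T$ and $T^{\star}$ to cover $1<p\le 2$, then duality via $T^{\star}$ to reach $p>2$, followed by Green's representation (Lemma \ref{lem Green's representation}) and the Laplace equation to recover the purely normal $(1,1)$ derivative. Your identification of the non-self-adjointness of $T$ as the main obstacle, and the role of Lemma \ref{lem Tstar maps calpha to itself} in making the duality pairing well-defined on a dense class, matches the paper exactly.
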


\begin{proof}{of Theorem \ref{thm W2p estimate of compact supported W2,p functions}:} 
    By Marcinkiewicz-intepolation in Theorem 9.8 in \cite{GT}, Lemma \ref{lem W22 estimate of the newtonian potential},  and Lemma \ref{lem T is weakly 1,1}, we deduce both $T$ and $T^{\star}$ are bounded linear map 
    from $L^{p}$ to $L^{p}$, $1<p\leq 2$ i.e 
    \begin{equation}\label{equ pp bound of T p between 1 and 2}
    |Tf|_{p,B}\leq C|f|_{p,B},\ \textrm{for all}\ 1<p\leq 2,
    \end{equation}
   and
 \begin{equation}
    |T^{\star}f|_{p,B}\leq C|f|_{p,B},\ \textrm{for all}\ 1<p\leq 2.
    \end{equation}
Then for $p>2$, we conclude  for all $f, g\in C_{c}^{\infty}(B)$  that
\begin{eqnarray}\label{eqn pp bound for T}
& & |Tf|_{p,B}\nonumber
\\&=& \sup_{|g|_{p^{\prime},B}=1}\int_{B}(Tf)gdx\nonumber
=\sup_{|g|_{p^{\prime},B}=1}\int_{B}f(T^{\star}g)dy\nonumber
\\&\leq &\sup_{|g|_{p^{\prime},B}=1} |f|_{p,B}|T^{\star}g|_{p^{\prime},B}\ \ (1<p^{\prime}<2)\nonumber
\\&\leq  & C\sup_{|g|_{p^{\prime},B}=1} |f|_{p,B}|g|_{p^{\prime},B}\nonumber
\\&=& C|f|_{p,B}.
\end{eqnarray}
Notice that $u=N_{\beta,\C^{n}}(\Delta_{\beta}u)$, by Lemma \ref{lem Green's representation}. Then, combining   (\ref{eqn pp bound for T}),  (\ref{equ pp bound of T p between 1 and 2}),  the fact that $C^{\infty}_{c}(B)$ is dense in $L^{p}(B)$,  and the Laplace equation 
\[\frac{|z|^{2-2\beta}}{\beta^2}\frac{\partial^2u}{\partial z\partial \bar{z}}=\Delta_{\beta} u-\Sigma_{i=1}^{n-1}\frac{\partial^2u}{\partial w_{i}\partial \bar{w}_{i}},\]
we obtain the estimates in all directions are obtained.  The  proof of Theorem \ref{thm W2p estimate of compact supported W2,p functions} is complete. Moreover, we've shown 
\begin{equation*}
|T^{\star}g|_{q,B}\leq C|g|_{q,B},\ \textrm{for all}\ 1<q< \infty. 
\end{equation*}
\end{proof}
\begin{cor}\label{cor regularity of weak solution to laplace equation with Lp righthand side}
Suppose $u\in W^{1,2,\beta}[B(2)]$ is a weak-solution  to \[\Delta u=f,\  f\in L^{p}[B(1)],\ \infty>p\geq 2.\]
Then $u$ is actually in $W^{2,p,\beta}[B(\frac{1}{2})]$ and is therefore a strong-solution to the above equation in $B(\frac{1}{2})$.
\end{cor}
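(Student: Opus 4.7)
The plan is to write $u$ as the sum of a Newtonian potential plus a weakly $\Delta_\beta$-harmonic remainder, and apply the $L^p$-boundedness of $T$ from Theorem \ref{thm W2p estimate of compact supported W2,p functions} to the first piece while invoking an interior regularity result for conical harmonic functions on the second.

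Fix a smooth cutoff $\eta$ in the polar coordinates with $\eta \equiv 1$ on $B(3/4)$ and $\mathrm{supp}\,\eta \Subset B(1)$, and set $\tilde{f} := \eta f$, extended by zero to $\C^{n}$. Define $v := N_{\beta,\C^{n}}(\tilde{f}) = \int_{B(1)} \Gamma(x,y)\tilde{f}(y)\,dy$. Since $\Gamma$ and $\nabla\Gamma$ are locally $L^{1}$ by the kernel estimates of \cite{Don} and \cite{ChenWang}, Young's inequality yields $v,\nabla v \in L^{p}_{\mathrm{loc}}$. For each $\mathfrak{D} \in \mathbb{T}$, the $L^{p}$-boundedness of $T$ from Theorem \ref{thm W2p estimate of compact supported W2,p functions} applied on any ball containing $B(1)$ gives $\mathfrak{D}v \in L^{p}_{\mathrm{loc}}$; combined with Lemma \ref{lem completeness of W2,p,beta}, this places $v$ locally in $W^{2,p,\beta}$. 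Approximating $\tilde{f}$ in $L^{p}$ by a sequence $f_{k} \in C^{\infty}_{c}(B(1))$, the classical Newtonian potentials $v_{k}:=N_{\beta,\C^{n}}(f_{k})$ solve $\Delta v_{k} = f_{k}$ pointwise off $D$ (Donaldson's Schauder theory), and passing to the limit in the weak formulation produces $\Delta v = \tilde{f}$ weakly on $\C^{n}$; in particular $\Delta v = f$ weakly on $B(3/4)$.

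Now set $w := u - v \in W^{1,2,\beta}(B(3/4))$; by construction $w$ weakly solves $\Delta w = 0$ on $B(3/4)$. Standard interior elliptic regularity gives $w \in C^{\infty}(B(3/4)\setminus D)$, and across $D$ the key observation is that convolution with the model heat kernel $H_{t}$ of (\ref{eqn heat kernel formula}) forms a semigroup of $\Delta_\beta$-smoothing operators, so weak harmonicity together with a local cutoff argument forces $w$ to agree on $B(1/2)$ with a function in $C^{2,\alpha,\beta}_{\mathrm{loc}}$ for some $\alpha \in (0, \min(\tfrac{1}{\beta}-1, 1))$. Consequently $w \in W^{2,p,\beta}[B(1/2)]$, and adding gives $u = v+w \in W^{2,p,\beta}[B(1/2)]$. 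Once all the Hessian components from Definition \ref{W2pbeta} lie in $L^{p}$, the weak equation $\Delta u = f$ promotes to a pointwise identity almost everywhere, so $u$ is a strong solution on $B(1/2)$.

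The main obstacle is upgrading the weakly $\Delta_\beta$-harmonic $W^{1,2,\beta}$ function $w$ to $C^{2,\alpha,\beta}_{\mathrm{loc}}$ regularity across the singular divisor. Standard interior elliptic theory breaks down at $D$ because $\omega_\beta$ degenerates there, so the argument must either appeal to a mean value / heat-semigroup identity $w = H_{t}\ast w$ combined with the kernel bounds of \cite{ChenWang}, or proceed by approximating $w$ with Dirichlet solutions of a small conical boundary value problem together with a Caccioppoli-type estimate in the polar coordinates. Everything else in the proof is formal once this across-the-divisor regularity for $w$ is in hand.
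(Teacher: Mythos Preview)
Your approach is essentially the same as the paper's: split $u$ as a Newtonian potential of $f$ plus a weakly $\Delta_\beta$-harmonic remainder, use the $L^p$-boundedness of $T$ (Theorem \ref{thm W2p estimate of compact supported W2,p functions}) for the first piece, and invoke interior conical-harmonic regularity for the second. The paper does not bother with the cutoff $\eta$ and simply takes $N_{\beta,B(1)}f$, but this is cosmetic. The step you correctly flag as the only nontrivial obstacle---upgrading a $W^{1,2,\beta}$ weak solution of $\Delta w=0$ to $C^{2,\alpha,\beta}$ across $D$---is dispatched in the paper by a direct citation of Lemma 2.1 in \cite{WYQ}, rather than by the heat-semigroup or Dirichlet-approximation arguments you sketch.
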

\begin{proof}{of Corollary \ref{cor regularity of weak solution to laplace equation with Lp righthand side}:}
The proof is quite straight forward. Just notice $N_{\beta,B(1)}f\in W^{2,p,\beta}[B(1)]$, and $v=u-N_{\beta,B(1)}f\in W^{2,p,\beta}[B(1)]$ is a weak solution to the harmonic equation
\[\Delta v=0,\  \textrm{over}\ B(1).\]
Thus by Lemma 2.1 in \cite{WYQ}, $v\in C^{2,\alpha,\beta}[B(\frac{2}{3})]$. Thus $u=v+N_{\beta,B(1)}f\in  W^{2,p,\beta}[B(\frac{1}{2})]$.

\end{proof}

\begin{proof}{of Theorem \ref{thm C1alpha estimate with Lp right  hand side}:} This is an easier version of the work in \cite{ChenWang}. By Corollary \ref{cor regularity of weak solution to laplace equation with Lp righthand side} and Donaldson's Schauder-estimate in \cite{Don}, it suffices to estimate the Newtonian potential of $f$:
\[N_{\beta}f=\int_{B(1)}\Gamma(x,y)f(y)dy.\]

By Lemma 7.2 in \cite{ChenWang}, we estimate 
\begin{eqnarray}
& &|\nabla (\Gamma \star f)|=|\int_{B(1)}(\nabla \Gamma (x,y))f(y)dy|\nonumber
\\&\leq &C \int_{B(1)}\frac{1}{|x-y|^{2n-1}}  |f(y)|dy\nonumber
\\&\leq & C|\frac{1}{|x-y|^{2n-1}}|_{L^{p^{\prime}},B(1)}|f|_{L^{p},B(1)}.
\end{eqnarray}
Since $p>2n$, we have $p^{\prime}<\frac{2n}{2n-1}$. Then 
\[|\frac{1}{|x-y|^{2n-1}}|_{L^{p^{\prime}},B(1)}\leq C,\]
and 
\[|\nabla (\Gamma \star f)|_{C^{0}[B(\frac{1}{2})]}\leq C|f|_{L^{p},B(1)}.\]
Next we estimate the H\"older norm of $\nabla (\Gamma \star f)$. Without loss of generality, we assume $|x_1|=\delta$ and $x_2=0$, which is the main issue. The H\"older estimate for all general $x_{1},x_{2}$ follows from the proof of Proposition 5.3 in \cite{ChenWang}.  We compute
\begin{eqnarray}
& &|\nabla (\Gamma \star f)(x_1)-\nabla (\Gamma \star f)(0)|\nonumber
\\&=&|\int_{B(1)}[\nabla \Gamma (x_1,y)-\nabla \Gamma (0,y)]f(y)dy|\nonumber
\\&\leq  & I_1+I_2,
\end{eqnarray}
where \[I_1=|\int_{B(1)\cap \{|y|> 10\delta\}}[\nabla \Gamma (x_1,y)-\nabla \Gamma (0,y)]f(y)dy|\]
and 
 \[I_2=|\int_{B(1)\cap \{|y|\leq  10\delta\}}[\nabla \Gamma (x_1,y)-\nabla \Gamma (0,y)]f(y)dy|.\]
 Then it's obvious that 
 \begin{eqnarray}
 & &I_2\nonumber
 \\&\leq &|\int_{ \{|y|\leq  10\delta\}}\nabla \Gamma (x_1,y)f(y)dy|+|\int_{\{|y|\leq  10\delta\}}\nabla \Gamma (0,y)f(y)dy| \nonumber
 \\&\leq & C\int_{\{|y|\leq  10\delta\}}\frac{1}{|x_1-y|^{2n-1}}|f(y)|dy+C\int_{ \{|y|\leq  10\delta\}} \frac{1}{|y|^{2n-1}}|f(y)|dy\nonumber
 \\&\leq & C\delta^{\alpha_0}|f|_{p},
 \end{eqnarray}
 where $\alpha_{0}=1-\frac{2n}{p}$.
 For the estimate of $I_1$, we should assume $$2n<p<\frac{2n}{1-\min \{\frac{1}{\beta}-1; 1\}}\ (\textrm{if}\ \beta\leq \frac{1}{2}\ 
 \textrm{we just assume}\ 2n<p<\infty).$$
 Thus $1-\frac{2n}{p}<\min\{\frac{1}{\beta}-1,1\}$. This does not change the conclusion of Theorem \ref{thm C1alpha estimate with Lp right  hand side}, because what we assume is an additional upper bound on $p$. Next, we estimate $I_{1}$. By Lemma 8.2 in \cite{ChenWang}, we compute
  \begin{eqnarray}
 & &I_1\nonumber
 \\&\leq &\int_{ \{|y|\geq  10\delta\}}\frac{1}{|y|^{2n-1}}|\nabla \Gamma (\frac{x_1}{|y|},\frac{y}{|y|})-\nabla \Gamma (0,\frac{y}{|y|})||f(y)|dy 
 \nonumber
 \\&\leq & C\delta^{\alpha_0+\epsilon}\int_{\{|y|\geq  10\delta\}}\frac{1}{|y|^{2n-1+\alpha_{0}+\epsilon}}|f(y)|dy \nonumber
 \\&\leq & C\delta^{\alpha_0}|f|_{p} 
 \end{eqnarray}
 where $\alpha_0 =1-\frac{2n}{p}$, and $\epsilon$ is chosen such that 
 $\alpha_{0}+\epsilon<\min\{\frac{1}{\beta}-1,1\}$. 
\end{proof}

\section{KRF metrics with small ossilations. \label{section KRF metrics with small ossilations}}

In $\C\times \C^{n-1}$,  consider the standard conical K\"ahler-Ricci flat metric with cone angle $\beta\in (0,1) $ along the divisor $\{0\}\times \mathbb{C}^{n-1}$.
\[
\omega_{\beta} = {\beta^2\over {|z|^{2-2\beta}}} |d\,z|^2 + |d\, w|^2,
\]
where $z\in \C$ and $w \in \C^{n-1}.\;$  We say a complex linear transformation $L$ splits along $D$, if the first component $\C\times \{0\}$ in $\C\times \C^{n-1}$ is an invariant space of  $L$, and the tangential component $\{0\}\times \C^{n-1}$ is also an invariant space of  $L$.   In this section,  we prove the following regularity proposition, which is crucial to establish Theorem
\ref{thm general regularity of conical Monge-Ampere equations}.

\begin{prop}\label{prop small ossilation implies regularity of RF metrics}
   Suppose $L$ is a linear transformation which splits along $D$, and $(L^\star\omega_{\beta})^{n}=\omega_{\beta}^{n}$. Then there exists a constant $Q_{0}$ depending on $\beta,\ n$, and the supremum of eigenvalues of $LL^{\star}$ with the following properties.   Suppose  $\phi$ is a 
   pluri-subharmonic function which satisfies 
   \begin{itemize}
   \item $\omega^{n}_{\phi}=e^{f}\omega_{\beta}^n$, $f\in C^{1,1,\beta}[B(100)]$;
   \item $(1-\delta)L^{\star}\omega_{\beta}\leq \omega_{\phi}\leq (1+\delta)L^{\star}\omega_{\beta}$, where $\delta< <1$ is sufficiently small with respect to the supremum of eigenvalues of $LL^{\star}$.
   \end{itemize}
   Then $\phi \in C^{2,\alpha,\beta}[B(\frac{1}{Q_{0}})],\  \textrm{for all}\
\alpha<\min\{\frac{1}{\beta}-1,1\}.$

 In particular, suppose  

\begin{equation}\label{equ small ossilation condition}
 \omega_\phi^n = \omega_{\beta}^n,\  (1-\delta)\omega_{\beta}\leq \omega_\phi\leq (1+\delta)\omega_\beta\ \textrm{over}\ B(100)
\end{equation}
for $\delta<<1$ small enough, then $\phi \in C^{2,\alpha,\beta}[B(\frac{1}{2})],\  \textrm{for all}\
\alpha<\min\{\frac{1}{\beta}-1,1\}.$
\end{prop}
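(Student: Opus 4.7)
The general case reduces to the small-oscillation situation (\ref{equ small ossilation condition}) by the linear change of coordinates $\tilde\phi(y) := \phi(L^{-1}y)$. Since $L$ splits along $D$ and $(L^*\omega_\beta)^n = \omega_\beta^n$, so does $L^{-1}$, and $(L^{-1})^* L^* \omega_\beta = \omega_\beta$. The hypotheses therefore transfer to $(1-\delta)\omega_\beta \leq \omega_{\tilde\phi} \leq (1+\delta)\omega_\beta$ together with $\omega_{\tilde\phi}^n = e^{\tilde f}\omega_\beta^n$, $\tilde f = f\circ L^{-1} \in C^{1,1,\beta}$, and the factor $Q_0$ absorbs the operator norms of $L^{\pm 1}$.

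\textbf{Tangential differentiation and $W^{2,p,\beta}$.} For the reduced case, rewrite the equation as $\log\det\phi_{i\bar j} = f + c_0 - (2-2\beta)\log|z|$, valid classically on $B(2)\setminus D$. Formally differentiating in a tangential direction $\partial_{w_k}$ annihilates the $\log|z|$ term and produces $L_\phi(\partial_{w_k}\phi) = \partial_{w_k}f$ with $L_\phi := \phi^{i\bar j}\partial_i\bar\partial_j$, and the small-oscillation bound gives $|L_\phi - \Delta_\beta|_{L^{\infty,\beta}_O} \leq C\delta$. To make this rigorous at $C^{1,1,\beta}$ regularity, I use tangential difference quotients $D^h_{w_k}\phi$, which satisfy
\begin{equation*}
L_{\psi_h}(D^h_{w_k}\phi) = D^h_{w_k}f \quad\text{on}\quad B(2)\setminus D,\qquad \psi^{i\bar j}_h(p) := \int_0^1 \phi^{i\bar j}(p + t h v_k)\,dt,
\end{equation*}
where $v_k$ is a unit tangential vector. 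One has $|L_{\psi_h} - \Delta_\beta|_{L^{\infty,\beta}_O} \leq C\delta$ uniformly in $h$, and the $C^{1,1,\beta}$ assumption on $\phi$ places $D^h_{w_k}\phi$ in $W^{2,p,\beta}[B(2)]$ uniformly. Theorem \ref{thm W2p estimate of general L with small oscillation} therefore furnishes a uniform bound, and passing $h \to 0$ yields $\partial_{w_k}\phi \in W^{2,p,\beta}[B(1)]$ for every finite $p$.

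\textbf{Sobolev upgrade and recovery of the normal direction.} Theorem \ref{thm C1alpha estimate with Lp right  hand side} applied to $\partial_{w_k}\phi$ (taking $p \to \infty$) then gives $\partial_{w_k}\phi \in C^{1,\alpha,\beta}[B(1/2)]$ for every $\alpha < \min(1/\beta - 1,1)$, controlling in the H\"older norm every entry $\phi_{w_i\bar w_j}$ and $|z|^{1-\beta}\phi_{z\bar w_k}$ of the Hessian. The remaining purely normal component $|z|^{2-2\beta}\phi_{z\bar z}$ is read off algebraically from $\det\phi_{i\bar j} = e^f\beta^2/|z|^{2-2\beta}$: ellipticity bounds the tangential minor $\det(\phi_{w_i\bar w_j})$ away from zero, so solving for $\phi_{z\bar z}$ produces the remaining H\"older estimate and closes $\phi \in C^{2,\alpha,\beta}$ on the stated ball. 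The main obstacle I anticipate is the uniform-in-$h$ application of Theorem \ref{thm W2p estimate of general L with small oscillation}: verifying $D^h_{w_k}\phi \in W^{2,p,\beta}[B(2)]$ with constant independent of $h$, and controlling boundary behavior across $D$ in the limit, requires the translation invariance of $\omega_\beta$ along $D$ together with cutoffs adapted to the polar coordinates, in the spirit of Lemma \ref{lem W22 estimate of the newtonian potential}.
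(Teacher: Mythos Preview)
Your proposal is correct and follows essentially the same strategy as the paper: tangential difference quotients, the $W^{2,p,\beta}$ estimate of Theorem~\ref{thm W2p estimate of general L with small oscillation} under small oscillation, the Sobolev upgrade of Theorem~\ref{thm C1alpha estimate with Lp right  hand side}, and algebraic recovery of the normal $(1,1)$-component from the Monge--Amp\`ere equation. The obstacle you anticipate---uniform-in-$h$ $W^{1,p}$ control of $D^h_{w_k}\phi$ on the right-hand side of Theorem~\ref{thm W2p estimate of general L with small oscillation}---is handled in the paper by a preliminary pass: the small-oscillation hypothesis gives $\Delta_\beta\phi\in L^\infty$, so after fixing a bounded potential via Proposition~\ref{prop Solvability of the ddbar equation} one obtains $\phi\in W^{2,p,\beta}$, hence $\partial_{w_k}\phi\in W^{1,p}$, and Lemma~7.23 of \cite{GT} then bounds the tangential difference quotients in $W^{1,p}$ uniformly in $h$.
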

\begin{proof}{of Proposition \ref{prop small ossilation implies regularity of RF metrics}:}  We only prove  the second part on the special case (\ref{equ small ossilation condition}), the proof of the general case is the same. 

 Using (\ref{equ small ossilation condition}) and Proposition \ref{prop Solvability of the ddbar equation}, over $B(10)$, we can choose a potential, still denoted as $\phi$, such that 
   \begin{equation}\label{equ choosing a good potential on B2}
   |\phi|_{0,B(10)}\leq C(n,\beta),
   \end{equation}
   where $C(n,\beta)$ is a constant which only depends on the dimension $n$ and angle $\beta$.
   
For any unit vector $v \in \{0\}\times \C^{n-1}.\;$ tangential to the divisor,  and for any small positive constant $\epsilon > 0$, define difference quotient as
\[
(D_{\epsilon, v}  \phi) (z,w) = {{ \phi(z, w+\epsilon \cdot v) - \phi(z,w)}\over \epsilon}.
\]
Let $\epsilon \rightarrow 0,$ we have
\[
\displaystyle \lim_{\epsilon \rightarrow 0} D_{\epsilon, v}  \phi  = (\nabla \phi, v).
\]
By (\ref{equ small ossilation condition}), we end up with a trivial but important fact 
\begin{equation}
|\Delta_{\beta}\phi|\leq C\ \textrm{in }\ B(10).
\end{equation}

Using Theorem \ref{thm W2p estimate of general L with small oscillation}, (\ref{equ choosing a good potential on B2}), Corollary \ref{cor regularity of weak solution to laplace equation with Lp righthand side}, and intepolations in the appendix of \cite{ChenWang}, we obtain
\[|\phi|_{W^{2,p,\beta}(B(5))}\leq C,\ \textrm{for all }\ 1<p<\infty.\]
This implies
\[|\frac{\partial \phi}{\partial v}|_{W^{1,p}(B(5))}\leq C,\ \textrm{for all }\ 1<p<\infty.\]

Then, by Lemma 7.23 in \cite{GT}, we conclude the following estimate on the tangential difference quotients
\begin{equation}\label{equ tangential difference quotient estimate} |D_{\epsilon, v}  \phi |_{W^{1,p}B(4)}\leq C. 
\end{equation} 

Therefore, take $D_{\epsilon, v}$ to both hand sides of the Ricci-flat  equation 
\begin{equation}
\omega^{n}_{\phi}=\omega^{n}_{\beta},
\end{equation}
we obtain
\[
\triangle_{\epsilon, v} D_{\epsilon, v}  \phi  = 0,  \ \textrm{over}\ B(9),
\]
where 
\[\triangle_{\epsilon, v}(x)=\int^{1}_{0}[s\phi(x+\epsilon v)+(1-s)\phi(x)]^{i\bar{j}}ds\frac{\partial^2}{\partial z_{i}\partial \bar{z}_{j}}.\]
(\ref{equ small ossilation condition}) implies directly the following.
\[
|\triangle_\beta - \triangle_{\epsilon, v}| \leq \delta ( \omega_\beta )^{-1}. 
\]
 By Evans-Krylov Theorem away from $D$, we have    $\phi \in C^{2}(\C\times C^{n-1}\setminus D) $ (actually $\phi$ is smooth away from $D$, but $C^{2}$ of $\phi$ is all we need here). Then the aprori estimate in Theorem \ref{thm W2p estimate of compact supported W2,p functions} and \ref{thm W2p estimate of general L with small oscillation} are directly applicable. 
Applying Theorem \ref{thm W2p estimate of general L with small oscillation} and (\ref{equ tangential difference quotient estimate}), we have
\[
[D_{\epsilon, v} \phi]_{W^{2,p,\beta},B(2)}\leq C (|D_{\epsilon, v} \phi|_{W^{1,p},B(4)})\leq C.
\]
Since the above holds for all $p$, then applying Theorem (\ref{thm C1alpha estimate with Lp right  hand side}), and the again the lower order estimate (\ref{equ tangential difference quotient estimate}),  we obtain the crucial estimate
\[
|D_{\epsilon, v} \phi|_{1,\alpha,\beta,B(1)}\leq C,\ \textrm{for all}\
\alpha<\min\{\frac{1}{\beta}-1,1\}.
\]
Now, let $\epsilon \rightarrow 0$, since $\phi$ is smooth away from $D$, we have $$\frac{\partial \phi}{\partial v}\in C^{1,\alpha,\beta}[B(\frac{1}{2})]$$ and 
\begin{equation}
|\frac{\partial \phi}{\partial v}|_{1,\alpha,\beta,B(\frac{1}{2})}\leq C,\ \textrm{for all}\
\alpha<\min\{\frac{1}{\beta}-1,1\}.
\end{equation}
This means the mixed derivatives $\frac{\partial^2 \phi}{\partial r\partial w_{i}},\ \frac{1}{r}\frac{\partial^2 \phi}{\partial \theta\partial w_{i}}$, and the pure tangential derivatives $\frac{\partial^2 \phi}{\partial w_{i}\partial w_{j}}$, are all bounded in $C^{1,\alpha,\beta}[B(\frac{1}{2})]$-norm  by $C$ whose dependence is as in Proposition \ref{prop small ossilation implies regularity of RF metrics}.

Using the equation (\ref{equ KRF equation}) and the quasi-isometric condition (\ref{equ small ossilation condition}), exactly as in the proof of Theorem 10.1 in \cite{ChenWanglongtime}, we deduce the crucial
normal-$(1,1)$ derivative.
\[|(\frac{\partial^2}{\partial r^2}+\frac{1}{r}\frac{\partial}{\partial r}+\frac{1}{\beta^2r^2}\frac{\partial^2}{\partial \theta^2})\phi|_{\alpha,\beta,B(\frac{1}{2})}\leq C.\]
  The above  implies our final conclusion
   $$\phi \in C^{2,\alpha,\beta}[B(\frac{1}{2})]\ \textrm{and}\
   |\phi|_{2,\alpha,\beta,B(\frac{1}{2})}\leq C, \ \textrm{for all}\
\alpha<\min\{\frac{1}{\beta}-1,1\}.$$
\end{proof}

Denote $A_{R}$ as the polycylinder of radius $R$. To be precise, we define
\begin{equation}\label{equ def of the annulus} A_{R}=D_{R}\times B_{R},
\end{equation}
where $D(R)$ is the disk with radius $R$ in the $z$-component of $\C^{n}$ (centered at the origin), and $B_{R}$
is the ball with radius $R$ in $D=\{0\}\times \C^{n-1}$ (also centered at the origin). The following Proposition is important.
\begin{prop}\label{prop Solvability of the ddbar equation}There exists a constant $C$ depending on $\beta$ and $n$ with the following properties. Given the equation 
\begin{equation}\label{ddbar equation}\sqrt{-1}\partial \bar{\partial}v=\eta\ \textrm{over}\ A_{1000},
\end{equation}
where $\eta$ is a closed (1,1)-form such that $\eta=\sqrt{-1}\partial\bar{\partial}\phi_{\eta}$ for some $\phi_{\eta}\in C^{1,1,\beta}$. Then there exists a solution $v$ in 
$W^{2,p,\beta}$ (for any $p$) such that 
$$|v|_{W^{2,p,\beta},A_{1}}+|v|_{0,A_{1}}\leq C|\eta|_{L^{\beta,\infty},A_{1000}}  .$$

\end{prop}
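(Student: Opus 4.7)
The content of the proposition is not existence of a potential (since $\phi_{\eta}$ is given) but rather an estimate for \emph{some} potential $v$ purely in terms of $|\eta|_{L^{\infty,\beta}}$; note that $|\phi_{\eta}|_{L^\infty}$ may be arbitrarily large even when $|\eta|_{L^{\infty,\beta}}$ is bounded. Because $\sqrt{-1}\partial\bar\partial$ annihilates pluriharmonic functions, the freedom is to subtract an appropriate pluriharmonic correction $p$ from $\phi_{\eta}$ to obtain $v := \phi_{\eta} - p$ with controlled $L^\infty$-norm, and then upgrade to $W^{2,p,\beta}$ via the Newtonian potential theory of Section \ref{section W2p and C1alpha}.

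\textbf{Construction.} Set $K := |\eta|_{L^{\beta,\infty}, A_{1000}}$; by hypothesis $-K\omega_{\beta} \leq \sqrt{-1}\partial\bar\partial\phi_{\eta} \leq K\omega_{\beta}$ on $A_{1000}$, so the trace $f := \Delta_{\beta}\phi_{\eta}$ satisfies $|f|_{L^\infty(A_{1000})} \leq CK$, the tangential complex Hessian $\partial_{w_i}\partial_{\bar w_j}\phi_{\eta}$ is bounded by $CK$, and $|\partial_z\partial_{\bar z}\phi_{\eta}| \leq CK|z|^{2\beta-2}$. Integrating the tangential Hessian bound twice in $w$ shows that $\phi_{\eta}(z,\cdot)$ differs from an affine-in-$w$ function $p_1(z,w) = a(z) + \ell(z)\cdot w$ by $O(K)$ on $A_{10}$; integrating the radial bound (the weight $|z|^{2\beta-2}$ is integrable because $\beta > 0$) shows that $\phi_{\eta}(\cdot,w)$ differs from $\mathrm{Re}\,h(z)$ for some holomorphic $h$ by $O(K|z|^{2\beta}) = O(K)$ on $A_{10}$. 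Define $p$ as the sum of these corrections, with coefficients extracted via integral averages of $\phi_{\eta}$ on a safe slice $\{|z| = \delta\}$ where $\phi_{\eta}$ is smooth. Then $p$ is pluriharmonic, $v := \phi_{\eta} - p$ satisfies $\sqrt{-1}\partial\bar\partial v = \eta$ on $A_{10}$, and $|v|_{L^\infty(A_{10})} \leq CK$. An application of Theorem \ref{thm W2p estimate of general L with small oscillation} to $\Delta_{\beta}v = f$ (the small-oscillation hypothesis is trivial since $L = \Delta_{\beta}$) then yields $|v|_{W^{2,p,\beta}, A_1} \leq C(|f|_{L^p, A_{10}} + |v|_{W^{1,p}, A_{10}}) \leq CK$ for every $p \in (1,\infty)$, as desired.

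\textbf{Main obstacle.} The crux is the construction of the pluriharmonic correction $p$ with coefficients quantitatively estimated by $K$ alone. Since $\phi_{\eta}$ is only $C^{\alpha}$ across $D$, one cannot evaluate derivatives of $\phi_{\eta}$ at points of the divisor, and the decomposition must be carried out by extracting the affine and holomorphic pieces through integral averages on the transverse slice $\{|z| = \delta\}$. The verification that these averages, combined with the $C^{1,1,\beta}$-Hessian bounds, produce a uniform $L^\infty$-estimate of $v = \phi_{\eta}-p$ on $A_1$ (depending only on $n,\beta$ and $K$) is the key technical step; once such a quantitative pluriharmonic-subtraction lemma is established, the $W^{2,p,\beta}$-estimate is immediate from the machinery of the previous section.
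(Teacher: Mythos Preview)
Your overall framework---produce a bounded potential by subtracting a pluriharmonic correction from $\phi_\eta$, then upgrade via Theorem~\ref{thm W2p estimate of general L with small oscillation}---is logically sound, and the final $W^{2,p,\beta}$ step is fine. The gap is in the construction of $p$. The claim that integrating the tangential Hessian bound twice in $w$ makes $\phi_\eta(z,\cdot)$ affine-in-$w$ up to $O(K)$ is false: the $C^{1,1,\beta}$-seminorm controls only the $(1,1)$-part $\partial_{w_i}\partial_{\bar w_j}\phi_\eta$, not the holomorphic second derivatives $\partial_{w_i}\partial_{w_j}\phi_\eta$. The function $M\,\mathrm{Re}(w_1^2)$ has vanishing complex Hessian (so $K=0$) yet is arbitrarily far from any real-affine function. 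The correction needed in the tangential directions is therefore not affine but genuinely pluriharmonic, and producing such a correction with an $L^\infty$ bound depending only on $K$ is itself a $\partial\bar\partial$-lemma with estimates---precisely the statement you are trying to prove. A second gap is the gluing: even granting separate pluriharmonic-in-$w$ and harmonic-in-$z$ corrections (each depending on the other variable), assembling them into a single function pluriharmonic in \emph{all} variables is not addressed by the slice-average sketch.

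The paper takes a different route that avoids this circularity. It pulls $\eta$ back through an orbifold branched cover (Lemma~4.3 of \cite{ChenWanglongtime}) so that upstairs the form lies in ordinary $L^\infty$; there one invokes H\"ormander's $\bar\partial$-theory to solve $\sqrt{-1}\partial\bar\partial\varphi=\sigma$ with $L^\infty$ and $W^{2,p}$ control (this is the smooth-case Lemma~\ref{lem dbar solvability smooth case}), then averages the solution over the deck group and pushes down as in Lemma~4.4 of \cite{ChenWanglongtime}. Thus the quantitative pluriharmonic-subtraction step you correctly identify as the crux is supplied by $\bar\partial$-methods on a smooth cover, not by direct integration of Hessian bounds.
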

\begin{proof}{of Proposition \ref{prop Solvability of the ddbar equation}:}

The proof is exactly as in Proposition 4.1 in \cite{ChenWanglongtime}. Just notice when $\eta$ is merely in $ L^{\infty,\beta}$, the orbifold trick in Lemma 4.3 of \cite{ChenWanglongtime} works the same. Then pulling back upstairs we still obtain a solution by Lemma \ref{lem dbar solvability smooth case}. Hence, take average of this upstairs solution over the discrete orbit of the monodromy group, and push this average down as in Lemma 4.4 in \cite{ChenWanglongtime}, we obtained the solution $v$ we want. 
\end{proof}
\section{Proof of the Main Theorems. \label{section Proof of the Main Theorems}}
In this  section we prove Theorem \ref{thm Liouville} and  \ref{thm general regularity of conical Monge-Ampere equations}. These proofs summarizes the work done in this article.  Corollary \ref{Cor regularity of weak KE metrics} is directly implied by Theorem \ref{thm general regularity of conical Monge-Ampere equations}, by Definition \ref{def local formula for conical KE metrics}.
\begin{proof}{of Theorem \ref{thm Liouville}:} It suffices to show that
(\ref{equ KRF equation}) already implies $\omega$ is $C^{\alpha,\beta}$, then Theorem 1.14 in \cite{ChenWanglongtime} implies Theorem \ref{thm Liouville}. The $C^{\alpha,\beta}$-regularity of the weak conical metric $\omega$ in  Theorem \ref{thm Liouville} is the main work of this article. This can be divided into 2 steps.

Step 1: 7 important results  in \cite{ChenWanglongtime}  directly work in the our weak conical case.  These 7  results are
\begin{itemize}
\item Lemma 6.1 on bounded weakly subharmonic functions in \cite{ChenWanglongtime} (directly works when $\omega$ is merely a weak conical metric );
\item Theorem  6.2 on weak-maximal principles  in \cite{ChenWanglongtime}(directly works when $\omega$ is merely a weak conical metric );
\item Theorem  7.3 and 7.4 on solvability of Dirichilet boundary value problems in \cite{ChenWanglongtime} (directly works when $\omega$ is merely a weak conical metric );
\item Theorem  8.1 on strong-maximal principles  in \cite{ChenWanglongtime} (directly works when $\omega$ is merely a weak conical metric );
\item Lemma   13.1 on Trudinger's harnack inequality  in \cite{ChenWanglongtime} (directly works when $\omega$ is merely a weak conical metric );
\item Proposition 4.1 in \cite{ChenWanglongtime} on solvability of Poincare-Lelong equation with
$C^{\alpha,\beta}$ right hand side. This is substituted by Propsition
\ref{prop Solvability of the ddbar equation} on solvability of Poincare-Lelong equation with $L^{\infty,\beta}$-right hand side, with almost the same proof. 
\end{itemize}
The above 7 results  imply any weak conical metric $\omega$ 
satisfying the conditions in Theorem \ref{thm Liouville} is either linearly-isometric to $\omega_{\beta}$ or admits a tangent cone which is linearly-isometric to $\omega_{\beta}$.

Step 2:  the last paragraph in Step 1 means the second assumption in Theorem 5.1 in \cite{ChenWanglongtime} is fulfilled. Then Theorem 5.1 in \cite{ChenWanglongtime} implies Theorem \ref{thm Liouville}, provided 
we can show $\omega$ is in $C^{\alpha,\beta}$. This is precisely what Proposition \ref{prop small ossilation implies regularity of RF metrics} says.  Actually, Proposition \ref{prop small ossilation implies regularity of RF metrics} is really the main technical result of this article. 

\end{proof}



\begin{proof}{of Theorem \ref{thm general regularity of conical Monge-Ampere equations}:} This theorem is   an interior regularity result, and away from $D$ 
the regularity automatically follows from Proposition 16 of \cite{CDS3}. Thus,  without loss of generality, we assume $\Omega=B_{0}(1)$  (the unit ball centered at the origin). 
This proof is a simple combination of Proposition \ref{prop small ossilation implies regularity of RF metrics}, Theorem \ref{thm Liouville}, and the Chen-Donaldson-Sun's trick in     the proof of Proposition 26 in \cite{CDS2}.

  We consider the rescaling of the metrics and potential as 
  \begin{equation}
\phi_{\lambda}=\lambda^{2}\phi,\ \omega_{\lambda}=\lambda^{2}\omega,\ \widehat{\omega}_{\beta}=\lambda^{2}\omega_{\beta},
\end{equation}
and the rescaling of the coordinates as 
 \begin{equation}\label{equ rescaled coordinates}
\widehat{z}=\lambda^{\frac{1}{\beta}}z,\ \widehat{y}_{j}=\lambda y_{j},\ 1\leq j\leq 2n-2.
\end{equation}

Then the $\widehat{\omega}_{\beta}$ is the model cone metric in 
the coordinates in (\ref{equ rescaled coordinates}). Then equation (\ref{equ conical Monge Ampere}) is rescaled to the following geometric 
equation 
\begin{equation}\label{equ MA equ of omega lambda}
\omega_{\lambda}^{n}=\frac{\widehat{f}}{\beta^{2}}\widehat{\omega}^{n}_{\beta}
\end{equation}
in the coordinates of (\ref{equ rescaled coordinates}), where $\widehat{f}$ is the pulled back function under the coordinate change. Since $\widehat{f}\in C^{1,1,\beta}B_{0}(\lambda)$, by the usual Evans-Krylov Theorem away from $D$, we deduce 
$$|\omega_{\lambda}|_{C^{\alpha}[B(R)\setminus T_{\epsilon}(D)]}\leq C(R,\epsilon),\ \textrm{for all}\ R\leq \frac{\lambda}{2}.$$

Since $f\in C^{\alpha}[B(1)]$ before rescaling, then $\lim_{\lambda\rightarrow \infty} \widehat{f}= Const$ in the sense of $C^{\widehat{\alpha}}$, for all $0<\widehat{\alpha}<\alpha$. Without loss of generality we can assume 
 $$\lim_{\lambda\rightarrow \infty} \frac{\widehat{f}}{\beta^2}= 1. $$

Then, $\omega_{\lambda}$ converges to $\omega_{\infty}$ uniformly over any fixed $B(R)\setminus T_{\epsilon}(D)$ such that 
\begin{equation}\omega^{n}_{\infty}=\widehat{\omega}^n_{\beta}\ \textrm{over}\ \C^{n}\setminus D,
\end{equation} 
and 
\begin{equation}\label{equ quasi isometric condition of the rescaled limit}
\frac{\widehat{\omega}_{\beta}}{C}\leq   \omega_{\infty}\leq C\widehat{\omega}_{\beta}.
\end{equation}
To prove $\omega_{\infty}$ is a weak conical metric in the sense of 
Definition \ref{def weak conical metrics}, it suffices to show 
$\omega_{\infty}$ admits a $C^{\alpha}$-potential near any $p\in D$.  By the proof of the Harnack inequality in item 2 of Lemma 6.1 in \cite{ChenWanglongtime}, and the quasi-isometric condition (\ref{equ quasi isometric condition of the rescaled limit}), it sufficies to 
show 
$\omega_{\infty}$ admits a $L^{\infty}$-potential near any $p\in D$.
This is done simply by applying Proposition \ref{prop Solvability of the ddbar equation} to $\omega_{\lambda}$. Namely,  using the quasi-isometric condition 
\begin{equation}\label{equ quasi isometric condition of omegalambda}
\frac{\widehat{\omega}_{\beta}}{C}\leq   \omega_{\lambda}\leq C\widehat{\omega}_{\beta},
\end{equation}
Proposition \ref{prop Solvability of the ddbar equation} and Theorem \ref{thm C1alpha estimate with Lp right  hand side} imply  for any $p\in D$ in the rescaled coordinates (\ref{equ rescaled coordinates}), when $\lambda$ is sufficiently large, there exists a potential $\phi_{p,\lambda}$ defined in $B_{p}(10^{10})$ 
such that 
\begin{equation}
|\phi_{p,\lambda}|_{C^{\alpha}[B(1)]}\leq C, \ \omega_{\lambda} = \sqrt{-1}\partial \bar \partial\phi_{p,\lambda}.
\end{equation}
Thus, for any $\widehat{\alpha}<\alpha$, $\phi_{p,\lambda}$ converges in $C^{\widehat{\alpha}}[B(\frac{1}{2})]$-topology to $\phi_{p,\infty}\in C^{\widehat{\alpha}}[B(\frac{1}{2})]$ such that 
\begin{equation}
\omega_{\infty} = \sqrt{-1}\partial \bar \partial\phi_{p,\infty}\ \textrm{over}\ B_{p}(\frac{1}{2}),\ \textrm{in the sense of current}.
\end{equation}
Then $\omega_{\infty}$ is a weak conical metric in the sense of Definition \ref{def weak conical metrics}.

Therefore, by Theorem \ref{thm Liouville}, we deduce 
\[\omega_{\infty}=L^{\star}\widehat{\omega}_{\beta},\]
where $L$ is a linear transformation preserving $D$. By the uniform convergence of $\omega_{\infty}$ over any fixed $B(R)\setminus T_{\epsilon}(D)$, and the   the proof of Proposition 2.5 in \cite{CDS1}, 
we deduce  
   \begin{equation}\label{equ L2 convergence to canonical metric}\lim_{\lambda\rightarrow \infty}|\omega_{\lambda}-L^{\star}\widehat{\omega}_{\beta}|_{L^{2}(B_{0}(1))}=0.
   \end{equation}
   
 To modify the convergence in (\ref{equ L2 convergence to canonical metric}) to pointwise convergence, we use the assumption that 
 $f  \in C^{1,1,\beta}(B). $
 
 Since $(L^{\star}\widehat{\omega}_{\beta})^n=\widehat{\omega}_{\beta}^n$, we translate equation (\ref{equ MA equ of omega lambda}) to be 
 \begin{equation}\label{equ translated MA equ of omega lambda}
\omega_{\lambda}^{n}=\frac{\widehat{f}}{\beta^{2}}(L^{\star}\widehat{\omega}_{\beta})^{n}. 
\end{equation}
 
 By Yau's Bochner technique and $h  \in C^{1,1,\beta}[B(1)]$,  we deduce for any $\delta>0$ that 
 \begin{equation}\label{equ subharmonicity of the trace }
 \Delta_{L^{\star}\widehat{\omega}_{\beta}}(tr_{L^{\star}\widehat{\omega}
 _{\beta}}\omega_{\lambda}-n+\delta)\geq -\frac{[h]_{C^{1,1,\beta}[B(1)]}}{\lambda^2}\rightarrow 0\ \textrm{in}\ B(1).
 \end{equation}
Then, (\ref{equ subharmonicity of the trace }), (\ref{equ L2 convergence to canonical metric}), and the Moser's iteration ( as in     the proof of Proposition 26 in \cite{CDS2}) imply 
   \[\lim_{\lambda}|\omega_{\lambda}-L^{\star}\widehat{\omega}_{\beta}|_{L^{\infty,\beta}, (B_{0}(\frac{1}{2}))}=0.\]

Let  $\delta_{0}$  be  small enough with respect to the $\delta$ in 
Proposition \ref{prop small ossilation implies regularity of RF metrics} and the quasi-isometric constant of $\omega_{\phi}$ with respect to $\omega_{\beta}$ in the original coordinates,  there exists a  $\lambda_{0}$ such that for all $\lambda\geq \lambda_{0}$, we have 
\begin{equation}\label{equ pointwise closeness between omega lambda and omega beta} |\omega_{\lambda}-L^{\star}\widehat{\omega}_{\beta}|_{L^{\infty,\beta}(B_{0}(\frac{1}{2}))}<\delta_{0}.
\end{equation}
 Since   (\ref{equ pointwise closeness between omega lambda and omega beta}) implies the following 
crucial small ossilation estimate before rescaling,
\begin{equation}
 |\omega_{\phi}-L^{\star}\omega_{\beta}|_{L^{\infty,\beta}(B_{0}(\frac{1}{2\lambda_{0}}))}<\delta_{0},
\end{equation}
then Proposition  \ref{prop small ossilation implies regularity of RF metrics} implies $\omega_{\phi}\in C^{\alpha,\beta}(\frac{1}{2Q\lambda_{0}}))$, where $Q$ is a constant which only depends on the quasi-isometric constant of $\omega_{\phi}$ with respect to $\omega_{\beta}$ in the original coordinates. The proof of Theorem \ref{thm general regularity of conical Monge-Ampere equations} is complete.
\end{proof}
\section{Appendix A: Poincare-Lelong equation in the smooth case.}
The following lemma is necessary for the results in  \cite{ChenWanglongtime} and also in this article (in the proof of of Proposition \ref{prop Solvability of the ddbar equation}). We believe it's well known to experts, but for the sake of being self-contained we still would like to give a proof here. The proof is actually a simple 
combination of the proof of the Lemma in page 387 of \cite{GH}, and Hormander's results. 
\begin{lem}\label{lem dbar solvability smooth case}
There exists a constant $C$ depending on  $n$ and $p$ with the following properties. Suppose  $\sigma\in L^{2}(B_{10})$ is a closed (1,1)-form such that
\begin{itemize} 
\item $\sigma=\sqrt{-1}\partial\bar{\partial}\phi_{\sigma}$ for some $\phi_{\sigma}\in C^{\alpha}$.
\item $\sigma\in L^{\infty}(B)\cap C^{\alpha}(B_{10}\setminus D)$. 
\end{itemize}  Then there exists a solution $\varphi$ in 
$W^{2,p}$ (for any $0<p<\infty$) to 
\begin{equation}\label{equ ddbar equation smooth}\sqrt{-1}\partial \bar{\partial}\varphi=\sigma\ \textrm{over}\ B_{1},
\end{equation} 
such that 
$$|\varphi|_{W^{2,p},B_{1}}+|\varphi|_{0,B_{1}}\leq C|\sigma|_{L^{\infty},B_{10}}  .$$
\end{lem}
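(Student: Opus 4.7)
The plan is to follow the classical proof of the $\partial\bar\partial$-Poincar\'e lemma (as in Griffiths-Harris, page 387), combined with H\"ormander's $L^2$ solvability of $\bar\partial$ on strictly pseudoconvex domains and the classical Calder\'on-Zygmund $W^{2,p}$ estimate for the Euclidean Laplacian. The idea is to produce $\varphi$ by first constructing a $d$-primitive of $\sigma$, then solving a $\bar\partial$-equation, and finally upgrading the regularity using the fact that the trace $\mathrm{tr}\,\sigma$ is already in $L^\infty$.

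Concretely, I would proceed in four steps. First, since $\sigma$ is $d$-closed on the star-shaped ball $B_{10}$, I would apply the explicit radial Poincar\'e homotopy to produce a real $1$-form $\eta$ on $B_{10}$ with $d\eta=\sigma$; this operator is manifestly bounded on $L^\infty$, so $|\eta|_{L^\infty(B_{10})}\leq C|\sigma|_{L^\infty(B_{10})}$. Second, decomposing $\eta=\eta^{1,0}+\eta^{0,1}$ with $\eta^{0,1}=\overline{\eta^{1,0}}$, the vanishing of the $(0,2)$-part of $d\eta-\sigma$ forces $\bar\partial\eta^{0,1}=0$; H\"ormander's $L^2$ theorem on the strictly pseudoconvex ball $B_5$ then yields $f\in L^2(B_5)$ with $\bar\partial f=\eta^{0,1}$ and $|f|_{L^2(B_5)}\leq C|\eta^{0,1}|_{L^2(B_5)}\leq C|\sigma|_{L^\infty(B_{10})}$. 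Third, setting $\varphi=-i(f-\bar f)=2\,\mathrm{Im}\,f$ and using $\eta^{1,0}=\partial\bar f$, one checks directly that
\[
\sqrt{-1}\,\partial\bar\partial\varphi=\partial\bar\partial(f-\bar f)=\bar\partial\eta^{1,0}+\partial\eta^{0,1}=\sigma,
\]
i.e.\ the $(1,1)$-part of $d\eta=\sigma$, so $\varphi$ is a weak solution of (\ref{equ ddbar equation smooth}) on $B_5$ with $|\varphi|_{L^2(B_5)}\leq C|\sigma|_{L^\infty(B_{10})}$. Fourth, the relation $\Delta\varphi=4\,\mathrm{tr}(\sqrt{-1}\,\partial\bar\partial\varphi)=4\,\mathrm{tr}\,\sigma\in L^\infty(B_5)$ together with the standard Euclidean interior $W^{2,p}$ estimate (Theorem 9.11 of \cite{GT}) applied on the pair $B_1\subset B_2$ gives
\[
|\varphi|_{W^{2,p}(B_1)}\leq C\bigl(|\Delta\varphi|_{L^p(B_2)}+|\varphi|_{L^p(B_2)}\bigr)\leq C|\sigma|_{L^\infty(B_{10})}
\]
for every finite $p$, and the $L^\infty$-bound then follows from Morrey's embedding $W^{2,p}\hookrightarrow C^0$ once $p>2n$.

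The only delicate point is the H\"ormander step, which requires care in choosing the auxiliary pseudoconvex domain so that the $L^2$-boundedness of the canonical solution is applied against the correct $\bar\partial$-closed datum; this is purely a bookkeeping issue about nested balls. The potential singularities of $\sigma$ along $D$ play no role here, since every estimate invoked is $L^p$-type and $\sigma$ is assumed $L^\infty$ on the entire ball; the extra $C^\alpha(B_{10}\setminus D)$ hypothesis is not needed for the $W^{2,p}$ conclusion (it is used elsewhere in the article to justify the higher regularity away from $D$).
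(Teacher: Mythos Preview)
Your argument is correct and follows the same Griffiths--Harris/H\"ormander template as the paper, but with the steps in a different order. The paper first solves $\bar\partial\eta=-\sqrt{-1}\,\sigma$ for a $(1,0)$-form $\eta$, then has to kill the holomorphic $(2,0)$-obstruction $\partial\eta$ by subtracting a holomorphic primitive $\xi$, and finally solves $\partial\gamma=\eta-\xi$ before symmetrizing. You instead apply the real Poincar\'e homotopy first to get $d\eta=\sigma$, which automatically forces $\partial\eta^{1,0}=0$ and $\bar\partial\eta^{0,1}=0$; a single H\"ormander solve $\bar\partial f=\eta^{0,1}$ then suffices, and the $(2,0)$-obstruction never appears. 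This is a genuine simplification over the paper's route. Both approaches end identically: $\Delta\varphi=4\,\mathrm{tr}\,\sigma\in L^\infty$ plus Theorem~9.11 of \cite{GT}.

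One small gap to patch: in your fourth step you assert $|\varphi|_{L^p(B_2)}\leq C|\sigma|_{L^\infty}$, but H\"ormander only gives you $\varphi\in L^2$. You need either a Sobolev bootstrap (apply the $W^{2,2}$ estimate on $B_4\subset B_5$, embed into $L^{p_1}$, repeat on nested balls) or, as the paper does, Moser iteration applied to $\Delta\varphi\in L^\infty$ to pass directly from the $L^2$ bound to an $L^\infty$ bound on an intermediate ball. Either is routine, but the inequality as written does not follow from what precedes it.
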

\begin{proof}{of Lemma \ref{lem dbar solvability smooth case}:}
The two conditions of $\sigma$ imply $\sigma \in L^{\infty}(B)$ as a distribution. 

By Hormander's $\bar{\partial}$-solvability in Theorem 2.2.1  \cite{Hormander}, there exists a (1,0)-form $\eta\in L^{2}(B(9))$ such that 
\begin{equation}
\sigma=-\sqrt{-1}\bar{\partial}\eta. 
\end{equation} 

Then, since  $$\sqrt{-1}\bar{\partial}(\partial \eta)=\partial \sigma =0,$$
then $\partial \eta$ is a closed holomorphic $(2,0)$-current. 
Thus, by the regularity of closed holomorphic $(2,0)-$forms,  $\partial \eta$ is actually a smooth holomorphic $(2,0)$-form. By the $d$-Poincare Lemma for smooth holomorphic $(p,0)$-forms, there exists a holomorphic $(1,0)$-form $\xi$ such that
\[\partial \eta=\partial \xi.\]
Thus, we deduce 
\begin{equation}
\partial (\eta-\xi)=0.
\end{equation}
By the conjugate of $\bar{\partial}$-solvability in Theorem 4.2.5 in page 86 of  Hormander's book \cite{Hormanderbook}, we end up with $\partial$-solvability and therefore a form $\gamma$ such that 
\begin{equation}
\partial \gamma= \eta-\xi.
\end{equation}
Then $\sqrt{-1}\partial\bar{\partial}\gamma=\sigma$. Let 
$$\varphi=\frac{1}{2}(\gamma+\bar{\gamma}),$$
then $\varphi$ is real and $\sqrt{-1}\partial\bar{\partial}\varphi=\sigma$.
   Since $\varphi\in W^{1,2}$, then $\varphi$ is a weak solution to 
   \[\Delta \varphi=tr\sigma\in L^{\infty}(B_{5}).\]
   Then, $\varphi$ is a strong solution to the above equation in the
   sense of Chap 9 in \cite{GT}. Then, the estimate in Lemma \ref{lem dbar solvability smooth case} follows from Theorem 9.11 in \cite{GT} and the Moser's iteration.
\end{proof}
\section{Appendix B: An alternative approach to Corollary \ref{Cor regularity of weak KE metrics} by the conical K\"ahler-Ricci flow.}
In this section, we present a short proof of Corollary \ref{Cor regularity of weak KE metrics} when the weak conical K\"ahler-Einstein metric lives on a closed K\"ahler manifold. This proof, while lives on a closed manifold, does not require the $W^{2,p,\beta}$-estimate established in sections \ref{section W2p and C1alpha} and \ref{section Calderon Zygmund and potential}.

Let $(M,[\omega_{0}])$ be a smooth K\"ahler manifold, $D$ be a smooth divisor, $S$ be the defining section of $D$, and $|\ \cdot\ |$ be a smooth metric of the line bundle associated to $D$, we consider the Monge-Ampere equation as 
\begin{equation}\label{equ general conical MA equation}
(\omega_{0}+\sqrt{-1}\partial \bar{\partial}\phi)^{n}=\frac{e^{h}}{|S|^{2-2\beta}} \omega_{0}^{n},\ h\in C^{1,1,\beta}(M).
\end{equation}
\begin{lem}\label{lem uniqueness of C11 solutions to the MA equation}Suppose both $\phi_{1}$ and $\phi_{2}$ are $C^{1,1,\beta}(M)$ solutions to equation (\ref{equ general conical MA equation}), such that both
$\omega_{0}+\sqrt{-1}\partial\bar{\partial}\phi_{1}$  and 
$\omega_{0}+\sqrt{-1}\partial\bar{\partial}\phi_{2}$ are weak conical K\"ahler-metrics over $M$. Then
$$\phi_{1}-\phi_{2}\equiv\textrm{Constant over}\ M\setminus D.$$
\end{lem}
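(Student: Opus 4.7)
\textbf{Proof proposal for Lemma \ref{lem uniqueness of C11 solutions to the MA equation}.}

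The plan is to use the classical Bedford--Taylor/pluripotential integration-by-parts trick, adapted to the conical $C^{1,1,\beta}$ setting. Set $\psi = \phi_{1}-\phi_{2}$, $\omega_{i} = \omega_{0}+\sqrt{-1}\partial\bar\partial\phi_{i}$ for $i=1,2$, and
\[
T \;=\; \sum_{k=0}^{n-1} \omega_{1}^{k}\wedge\omega_{2}^{n-1-k}.
\]
Since the two equations have the same right-hand side, $\omega_{1}^{n}=\omega_{2}^{n}$, and the algebraic identity $\omega_{1}^{n}-\omega_{2}^{n} = \sqrt{-1}\partial\bar\partial\psi\wedge T$ shows
\[
\sqrt{-1}\,\partial\bar\partial\psi\wedge T \;=\; 0
\]
as a positive-coefficient $(n,n)$-current on $M$. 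Multiplying by $\psi$ and integrating formally produces
\[
0 \;=\; \int_{M}\psi\,\sqrt{-1}\partial\bar\partial\psi\wedge T \;=\; -\int_{M}\sqrt{-1}\,\partial\psi\wedge\bar\partial\psi\wedge T,
\]
and since both $\omega_{1},\omega_{2}$ are weak-conical K\"ahler metrics, Definition \ref{def weak conical metrics} gives $T \geq c\,\omega_{\beta}^{n-1}$ off $D$ for some $c>0$. Therefore $\sqrt{-1}\partial\psi\wedge\bar\partial\psi\wedge\omega_{\beta}^{n-1}=0$ a.e., i.e.\ $\bar\partial\psi \equiv 0$ on $M\setminus D$. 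Since $\psi$ is real and $C^{2,\alpha}$ on $M\setminus D$, this forces $\psi$ to be locally constant there; connectedness of $M\setminus D$ (complex codimension one) gives the conclusion.

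The main technical point is justifying the integration by parts. I would introduce a cutoff $\eta_{\epsilon}$ supported outside $T_{\epsilon}(D)$, equal to one outside $T_{2\epsilon}(D)$, with $|d\eta_{\epsilon}|_{\omega_{\beta}} \leq C\epsilon^{-1}$ on the annular region $\{\epsilon \leq r \leq 2\epsilon\}$ (where $r$ is the distance to $D$ in $\omega_{\beta}$). Away from $D$, $\psi \in C^{2,\alpha}$, so Stokes' theorem applies in the smooth region and yields
\[
\int_{M}\eta_{\epsilon}\psi\,\sqrt{-1}\partial\bar\partial\psi\wedge T
=
-\int_{M}\eta_{\epsilon}\sqrt{-1}\partial\psi\wedge\bar\partial\psi\wedge T
-
\int_{M}\psi\,\sqrt{-1}\partial\eta_{\epsilon}\wedge\bar\partial\psi\wedge T.
\]
The left-hand side is identically zero. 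For the last term I would use $|\psi|_{L^{\infty}(M)}\leq C$ (continuity from $\psi\in C^{1,1,\beta}$), $T \leq C\omega_{\beta}^{n-1}$, Cauchy--Schwarz on $\{\epsilon\leq r\leq 2\epsilon\}$, and the fact that $|\bar\partial\psi|_{\omega_{\beta}}$ is in $L^{2}(M,\omega_{\beta}^{n})$ (obtained from a preliminary cutoff identity against $\omega_{\beta}^{n-1}$ using the $L^{\infty,\beta}$ bound on $\sqrt{-1}\partial\bar\partial\psi$). Combined with $\mathrm{Vol}_{\omega_{\beta}}(\{\epsilon\leq r\leq 2\epsilon\}) \lesssim \epsilon^{2}$, the error bound becomes
\[
\Bigl|\int_{M}\psi\,\sqrt{-1}\partial\eta_{\epsilon}\wedge\bar\partial\psi\wedge T\Bigr|
\;\leq\;
C\,\|\bar\partial\psi\|_{L^{2}(\{\epsilon\leq r\leq 2\epsilon\})},
\]
which tends to $0$ as $\epsilon\to 0$ by absolute continuity of the $L^{2}$-integral.

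The hard part, then, is to confirm $|\bar\partial\psi|_{\omega_{\beta}}\in L^{2}(M)$, which I would obtain by a first, easier cutoff integration by parts against the reference form $T_{\beta} := \omega_{\beta}^{n-1}$: writing $\int_{M}\eta_{\epsilon}\sqrt{-1}\partial\psi\wedge\bar\partial\psi\wedge\omega_{\beta}^{n-1} = -\int_{M}\eta_{\epsilon}\psi\,\sqrt{-1}\partial\bar\partial\psi\wedge\omega_{\beta}^{n-1} - \int_{M}\psi\,\sqrt{-1}\partial\eta_{\epsilon}\wedge\bar\partial\psi\wedge\omega_{\beta}^{n-1}$, bounding the first term using $|\sqrt{-1}\partial\bar\partial\psi|_{L^{\infty,\beta}}\leq C$ and $|\psi|_{\infty}\leq C$, and absorbing the last term via Cauchy--Schwarz to get a uniform bound in $\epsilon$; passing to the limit yields $\|\bar\partial\psi\|_{L^{2}(M)}<\infty$. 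Once this $L^{2}$-control is in hand, the cutoff argument of the previous paragraph runs through and the uniqueness follows.
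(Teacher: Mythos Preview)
Your argument is correct, but it follows a genuinely different route from the paper. The paper's proof is a two-line maximum-principle argument: subtracting the two Monge--Amp\`ere equations gives $\Delta_{s}(\phi_{1}-\phi_{2})=0$ on $M\setminus D$, where $\Delta_{s}=\int_{0}^{1}g^{i\bar j}_{s\phi_{1}+(1-s)\phi_{2}}\,ds\,\partial_{i}\partial_{\bar j}$ is a uniformly elliptic CKS operator, and then the strong maximum principle in the conical setting (Theorem~8.1 of \cite{ChenWanglongtime}) forces $\phi_{1}-\phi_{2}$ to be constant. Your approach is instead the Calabi/Bedford--Taylor energy method: you pair $\sqrt{-1}\partial\bar\partial\psi\wedge T=0$ against $\psi$, integrate by parts with a cutoff near $D$, and control the error using the $L^{\infty,\beta}$ bounds together with the $O(\epsilon^{2})$ volume of the tube and a preliminary $L^{2}$ gradient bound (itself obtained by the same cutoff trick, preferably with $\eta_{\epsilon}^{2}$ so that the cross term can be absorbed). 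The paper's proof is much shorter because it leverages a maximum principle already established in \cite{ChenWanglongtime}; your proof is longer but more self-contained, requiring only the $C^{1,1,\beta}$ hypotheses and elementary Stokes-type arguments rather than any external conical PDE machinery. One cosmetic point: on the closed manifold $M$ there is no global $\omega_{\beta}$, so your reference form $\omega_{\beta}^{n-1}$ should be replaced by a fixed global conical K\"ahler metric quasi-isometric to the local model near $D$; this changes nothing in the estimates.
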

\begin{proof}{of Lemma \ref{lem uniqueness of C11 solutions to the MA equation}:}  Substracting (\ref{equ general conical MA equation} )
with $\phi=\phi_{2}$ from (\ref{equ general conical MA equation} )
with $\phi=\phi_{1}$, we end up with
\begin{equation}\label{equ phi1-phi2 is harmonic}
\Delta_{s}(\phi_{1}-\phi_{2})=0\ \textrm{over}\  M\setminus D,  
\end{equation}
where $$\Delta_{s}=\int_{0}^{1}g^{i\bar{j}}_{s\phi_1+(1-s)\phi_2}
ds\frac{\partial^{2}}{\partial z_i\partial\bar{z}_j }.$$
Thus, Lemma \ref{lem uniqueness of C11 solutions to the MA equation} follows from equation (\ref{equ phi1-phi2 is harmonic}) and the strong maximal-principle in Theorem 8.1 in \cite{ChenWanglongtime}.
\end{proof}
\begin{thm}\label{thm regularity of weak conical over closed manifold}Suppose $h\in C^{1,1,\beta}(M)$. Then any weak solution to 
(\ref{equ general conical MA equation}) is strong i.e in $C^{2,\alpha,\beta}$, for any $0<\alpha<\min\{\frac{1}{\beta}-1,1\}$.

 In particular, any weak-conical K\"ahler-Einstein metric of  $[M,(1-\beta)D]$ ($0<\beta<1$) must be a $C^{\alpha,\beta}$ conical K\"ahler-Einstein metric, for any $ 0< \alpha < \min({1\over \beta}-1, 1).\;$
\end{thm}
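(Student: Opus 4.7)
The plan is to reduce the regularity assertion to the uniqueness Lemma \ref{lem uniqueness of C11 solutions to the MA equation} by producing, independently, a strong solution. If one can construct $\widetilde\phi \in C^{2,\alpha,\beta}(M)$ solving (\ref{equ general conical MA equation}) with $\omega_0 + \sqrt{-1}\partial\bar\partial\widetilde\phi$ a weak-conical Kähler metric, then Lemma \ref{lem uniqueness of C11 solutions to the MA equation} applied to the pair $(\phi, \widetilde\phi)$ forces $\phi - \widetilde\phi$ to be constant on $M \setminus D$, and hence on $M$ by continuity. Thus $\phi = \widetilde\phi + c \in C^{2,\alpha,\beta}(M)$, which is exactly what the theorem claims.

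For the construction of $\widetilde\phi$, the plan is to exploit the conical Kähler-Ricci flow approach developed in \cite{ChenWanglongtime}. First, approximate $h \in C^{1,1,\beta}(M)$ by smooth $h_\epsilon \to h$ and replace the singular volume form $\frac{e^{h}}{|S|^{2-2\beta}}\omega_0^n$ by smooth volume forms of the type $\frac{e^{h_\epsilon}}{(|S|^2 + \epsilon)^{1-\beta}}\omega_0^n$. Yau's theorem yields smooth solutions $\widetilde\phi_\epsilon$ to the smoothed Monge–Ampère equations. Next, invoke the long-time existence and convergence of the associated conical Kähler-Ricci flow, together with the uniform $C^{2,\alpha,\beta}$ a priori estimates established in \cite{ChenWanglongtime}, to extract a subsequential limit $\widetilde\phi \in C^{2,\alpha,\beta}(M)$ solving (\ref{equ general conical MA equation}) for any $\alpha < \min\{\frac{1}{\beta}-1, 1\}$. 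The Kähler-Einstein consequence then follows by specializing $h = \lambda\phi + \tilde h$ as in Definition \ref{def local formula for conical KE metrics}, since a strong conical Kähler-Einstein metric exists in this setting by the work cited in the introduction.

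The main technical input, and the principal obstacle, is verifying that the uniform $C^{2,\alpha,\beta}$ bounds along the approximation remain valid with the only regularity hypothesis on the right-hand side being $h \in C^{1,1,\beta}$. This is precisely the content of the long-time existence theory in \cite{ChenWanglongtime}, and is what allows the argument on closed manifolds to bypass the $W^{2,p,\beta}$-machinery of Sections \ref{section Calderon Zygmund and potential} and \ref{section W2p and C1alpha}; once that input is accepted, the combination of existence plus the uniqueness Lemma \ref{lem uniqueness of C11 solutions to the MA equation} closes the argument cleanly.
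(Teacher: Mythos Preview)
Your overall strategy---manufacture a strong $C^{2,\alpha,\beta}$ solution by an independent existence argument and then invoke the uniqueness Lemma \ref{lem uniqueness of C11 solutions to the MA equation}---is exactly the paper's approach. Where you diverge is in the construction of the strong solution, and here your outline is muddled.

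The paper does not approximate $h$ by smooth $h_\epsilon$ and appeal to Yau's theorem on the regularized equations. That route produces exactly the Guenancia--Paun/Yao weak-conical metrics whose regularity is the question at hand; taking a limit of the $\widetilde\phi_\epsilon$ only yields $L^{\infty,\beta}$ control on the metric, not $C^{\alpha,\beta}$. Instead the paper runs the conical K\"ahler--Ricci flow (\ref{equ general conical flow}) directly, and the key convergence mechanism you omit is the monotonicity of the $K$-energy
\[
K(\phi)=\int_M \log\frac{|S|^{2-2\beta}\omega_\phi^n}{e^h\omega_0^n}\,\frac{\omega_\phi^n}{n!},\qquad \frac{dK(\phi(t))}{dt}=-\int_M\Big|\nabla_\phi\frac{\partial\phi}{\partial t}\Big|^2\frac{\omega_\phi^n}{n!}\le 0.
\]
This, together with the long-time existence and the parabolic Schauder theory from \cite{ChenWanglongtime} (Theorem 1.7 and Section 11), forces the flow to converge to some $\phi_\infty\in C^{2,\alpha,\beta}(M)$ solving (\ref{equ general conical MA equation}); the regularity of $\phi_\infty$ comes from the flow estimates themselves, not from any elliptic approximation. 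You are right that the $C^2$-estimate along the flow needs only $\Delta_\beta h\ge -C$, which $h\in C^{1,1,\beta}$ supplies; the paper makes this explicit via a Guenancia--Paun--type Schwarz inequality (cf.\ (\ref{eqn Siu Bochner consequence})). Once $\phi_\infty$ is in hand, your final step is correct and identical to the paper's: Lemma \ref{lem uniqueness of C11 solutions to the MA equation} gives $\phi=\phi_\infty+\textrm{const}\in C^{2,\alpha,\beta}$.
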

\begin{proof}

Define 
$$K(\phi)=\int_{M}\log \frac{|S|^{2-2\beta}\omega_{\phi}^{n}}{e^{h}\omega_{0}^n}\frac{\omega_{\phi}^{n}}{n!}.$$
Then, along the corresponding conical K\"ahler-Ricci flow,  
 \begin{equation}\label{equ general conical flow}
(\omega_{0}+\sqrt{-1}\partial \bar{\partial}\phi)^{n}=\frac{e^{h+\frac{\partial \phi}{\partial t}}}{|S|^{2-2\beta}} \omega_{0}^{n},
\end{equation}
we deduce
$$K(\phi(t))=\int_{M}\frac{\partial \phi}{\partial t}\frac{\omega_{\phi}^{n}}{n!}.$$
Then, along the flow (\ref{equ general conical flow}), we obtain the following monotonicity by direct computation.
\begin{equation}\label{equ monntonicity of general K energy}
\frac{d K(\phi(t))}{dt}=-\int_{M}|\nabla_{\phi}\frac{\partial \phi}{\partial t}|^{2}\frac{\omega_{\phi}^{n}}{n!}.
\end{equation}

Then, applying  the proof of Theorem 1.7 in \cite{ChenWanglongtime}, with modifications in the $C^{2}$-estimate part (which we will specify later), together with  the monotonicity of the  K-energy  (\ref{equ monntonicity of general K energy}) in the convergence argument 
in Section 11 of \cite{ChenWanglongtime}, we deduce that the flow (\ref{equ general conical flow}) converges to a $\phi_{\infty}\in C^{2,\alpha,\beta}(M)$ which solves equation (\ref{equ general conical MA equation}). The point is, the solution $\phi_{\infty}$ produced by the 
conical K\"ahler-Ricci flow in \cite{ChenWanglongtime} is in $C^{2,\alpha,\beta}(M)$ (strong conical)!. 

Then, both  $\underline{\phi}$ and $\phi_{\infty}$ solve equation (\ref{equ general conical MA equation}). By the uniqueness of $C^{1,1,\beta}$ solutions in Lemma \ref{lem uniqueness of C11 solutions to the MA equation}, we obtain 
\[\underline{\phi}=\phi_{\infty}+Const\in C^{2,\alpha,\beta}.\]
The proof of Theorem \ref{thm regularity of weak conical over closed manifold} is complete. 

   The modification on the $C^{2}$-estimate is that, in the setting of Theorem \ref{thm regularity of weak conical over closed manifold}, it's super easy to apply the Guenancia-Paun type $C^{2}$-estimate as in \cite{GP}, while we surely believe the Chern-Lu inequality as in 
   \cite{CDS2} and \cite{JMR}, and the trick in \cite{Yao} all work equally well. Namely, using the assumption that $\Delta_{\beta}h\geq -C$, formula (22) in  \cite{WYQWF} (for $\epsilon=0$) says
   
   \begin{eqnarray}\label{eqn Siu Bochner consequence}
 & &(\Delta_{\phi}-\frac{\partial }{\partial t})\{\log {tr}_{\omega_{D}}\omega_{\phi}+B|S|^{2\beta}-A\phi\}\nonumber
\\&\geq & {tr}_{\omega_{\phi}}\omega_{D}+A \frac{\partial \phi}{\partial t}  -C.
\end{eqnarray}
By using the  barrier function in the proof of  Theorem 6.2 in \cite{ChenWanglongtime},  the rest of the proof of the $C^{2}$-estimate goes exactly as
   the proof of Lemma 3.1 in \cite{WYQWF}, with $\epsilon=0$.
\end{proof}

Xiuxiong Chen, Department of Mathematics, Stony Brook University,
NY, USA;\ \ xiu@math.sunysb.edu.

Yuanqi Wang, Department of Mathematics, University of California  at Santa Barbara, Santa Barbara,
CA,  USA;\ \ wangyuanqi@math.ucsb.edu.

  \end{document}